\definecolor{Gray}{gray}{0.95}
\newcommand{\unds}{\underline{s}}
\newcommand{\Ss}{S(\unds)}
\newcommand{\undt}{\underline{t}}
\newcommand{\St}{S(\undt)}
\newcommand{\cint}{(C)\int}
\newcommand{\dm}{d\mu}
\newcommand{\muc}{\overline{\mu}}
\newcommand{\dmc}{d\muc}
\newcommand{\dlp}{d\LP}
\newcommand{\dup}{d\UP}
\newcommand{\rint}[2]{(R)\int_{#1}^{#2}}
\newcommand{\LC}{\underline{C}}
\newcommand{\UP}{\overline{P}}
\newcommand{\LP}{\underline{P}}
\newcommand{\LQ}{\underline{Q}}
\newcommand{\LE}{\underline{E}}
\newcommand{\LG}{\underline{G}}
\newcommand{\lpr}{\underline{P}}
\newcommand{\ale}{\alpha^{E}}
\newcommand{\LEC}{\LE_{c}}
\newcommand{\LEDC}{\LE_{2c}}
\newcommand{\LED}{\LE_{2}}
\newcommand{\LEP}{\LE^{+}}
\newcommand{\LGDC}{\LG_{2c}}
\newcommand{\LD}{L_{2}}
\newcommand{\LDC}{L_{2c}}
\newcommand{\rset}{\mathbb{R}}
\newcommand{\natsetp}{\mathbb{N}_{0}}
\newcommand{\lset}{\mathcal{L}}
\newcommand{\Rset}{\rset}
\newcommand{\naturals}{\mbox{I\hspace{-.1cm}N}}
\newcommand{\prt}{I\dsn P}
\newcommand{\nset}{\naturals}
\newcommand{\dset}{\mathcal{D}}
\newcommand{\aset}{\mathcal{A}}
\newcommand{\eset}{\mathcal{E}}
\newcommand{\nmset}{\mathcal{N}}
\newcommand{\xset}{\mathcal{X}}
\newcommand{\bset}{\mathcal{B}}
\newcommand{\bsetp}{\mathcal{B}^{\varnothing}}
\newcommand{\bsetz}{\mathcal{B}^{Z}}
\newcommand{\asetc}{\aset_{c}}
\newcommand{\asetpa}{\aset(\prt)}
\newcommand{\asetpav}{\asetpa^{\varnothing}}
\newcommand{\asetcond}{\asetpa|\asetpav}
\newcommand{\asetz}{\mathcal{A}^{Z}}
\newcommand{\lcond}{\xset|\bsetp}
\newcommand{\lcondz}{(\lcond)^{+}}
\newcommand{\infa}{J_{A}}
\newcommand{\infb}{J_{B}}
\newcommand{\dsn}{\!\!}
\newcommand{\comment}[1]{}
\newcommand{\nega}[1]{\neg #1}
\newcommand{\lgn}{\leq_{\textsc{GN}}}
\newcommand{\ze}{z_{E}}
{\left\lbrace\begin{array}{@{}l@{}}}%
	{\end{array}\right.}
\newtheorem{theorem}{Theorem}
\newtheorem{definition}{Definition}
\newtheorem{proposition}{Proposition}
\newtheorem{corollary}{Corollary}
\newtheorem{lemma}{Lemma}
\newtheorem{example}{Example}
\newtheorem{remark}{Remark}
\newcolumntype{C}{>{$\displaystyle} c <{$}}
\begin{document}



\title{Weakly Consistent Extensions of Lower Previsions}


\author[1]{Renato Pelessoni\thanks{renato.pelessoni@econ.units.it}}
\author[1]{Paolo Vicig\thanks{paolo.vicig@econ.units.it}}
\affil[1]{DEAMS ``B. de Finetti''\\
	University of Trieste\\
	Piazzale Europa~1\\
	I-34127 Trieste\\
	Italy}

\renewcommand\Authands{ and }

\maketitle

\begin{abstract}
Several consistency notions are available for a lower prevision $\lpr$ assessed on a set $\dset$ of gambles (bounded random variables),
ranging from the well known coherence to convexity and to the recently introduced $2$-coherence and $2$-convexity.
In all these instances,
a procedure with remarkable features,
called (coherent, convex, $2$-coherent or $2$-convex) natural extension,
is available to extend $\lpr$,
preserving its consistency properties,
to an arbitrary superset of gambles.
We analyse the $2$-coherent and $2$-convex natural extensions,
$\LED$ and $\LEDC$ respectively,
showing that they may coincide with the other extensions in certain,
special but rather common, cases
of `full' conditional lower prevision or probability assessments.
This does generally not happen if $\lpr$ is a(n unconditional) lower probability on the powerset of a given partition and is extended to the gambles defined on the same partition.
In this framework we determine alternative formulae for $\LED$ and $\LEDC$.
We also show that $\LEDC$ may be nearly vacuous in some sense,
while the Choquet integral extension is $2$-coherent if $\lpr$ is,
and bounds from above the $2$-coherent natural extension.
Relationships between the finiteness of the various natural extensions and conditions of avoiding sure loss or weaker are also pointed out.

\smallskip
\noindent \textbf{Keywords.}
$2$-convex lower previsions,
$2$-coherent lower previsions,
coherent lower previsions,
convex lower previsions,
natural extensions,
Choquet integral extension
\end{abstract}

\section*{Acknowledgement}
*NOTICE: This is the authors' version of a work that was accepted for publication in Fuzzy Sets and Systems. Changes resulting from the publishing process, such as peer review, editing, corrections, structural formatting, and other quality control mechanisms may not be reflected in this document. Changes may have been made to this work since it was submitted for publication. A definitive version was subsequently published in Fuzzy Sets and Systems, 
vol. 328, December~2017, pages 83–-106
10.1016/j.fss.2017.07.021 $\copyright$ Copyright Elsevier

https://doi.org/10.1016/j.fss.2017.07.021

\vspace{0.3cm}
$\copyright$ 2017. This manuscript version is made available under the CC-BY-NC-ND 4.0 license http://creativecommons.org/licenses/by-nc-nd/4.0/

\begin{center}
	\includegraphics[width=2cm]{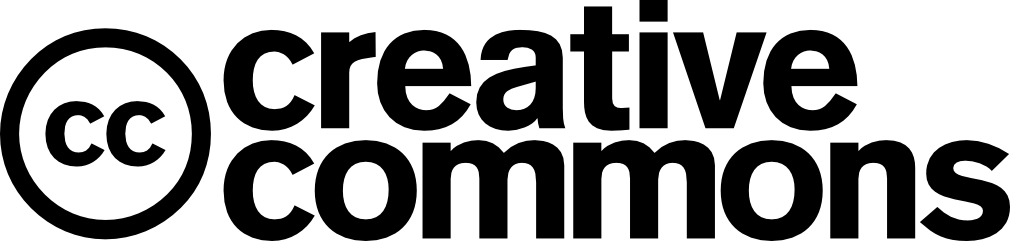}
	\includegraphics[width=2cm]{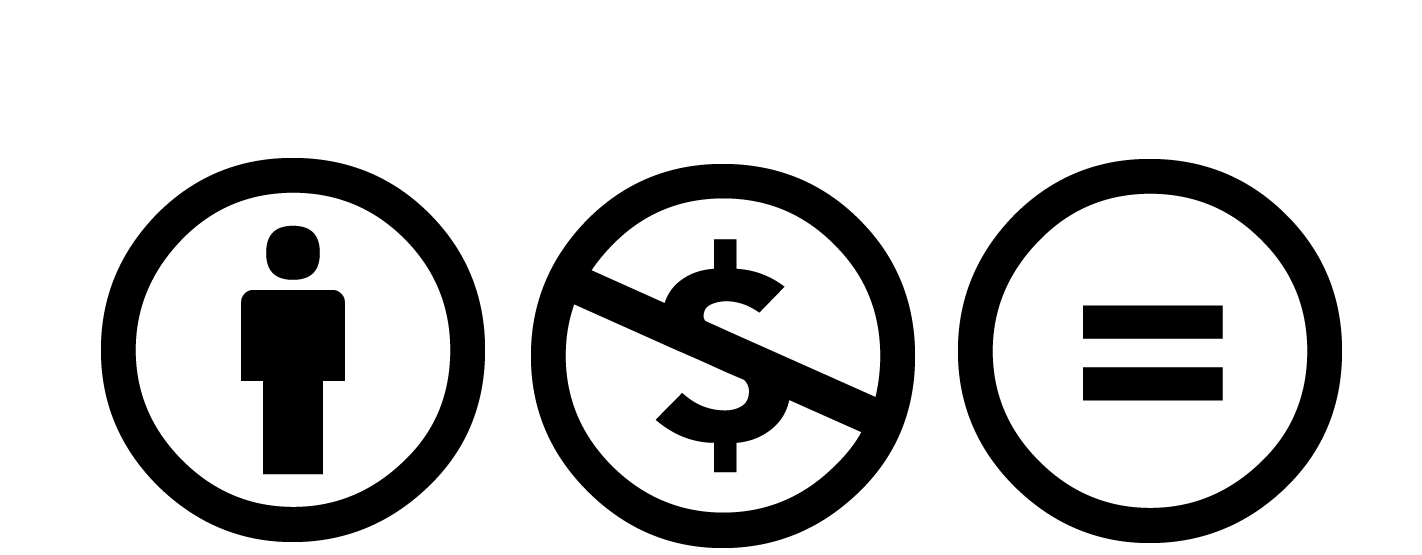}
\end{center}

\section{Introduction}
\label{sec:introduction}
Imprecise Probability Theory,
as developed in \cite{wal91,wil07}
(see also \cite{tro14,wal81})
exploits the notion of \emph{coherence}
as its main consistency concept for an uncertainty measure.
The measure is a lower (or upper) probability or, more generally, prevision if applied to an arbitrary set $\dset$ of bounded random variables or
\emph{gambles}.
The set $\dset$ may be also made of conditional gambles such as $X|B$,
with $X$ (unconditional) gamble and $B$ non-impossible event.
Coherent lower and upper previsions encompass a number of uncertainty models as special cases,
including coherent (precise) previsions in the sense of de Finetti \cite{def74},
classical probabilities, $2$-monotone probabilities,
coherent risk measures, belief functions and others.

Weaker consistency concepts have been investigated in order to accommodate further uncertainty models into a unit framework.
The framework we refer to is the \emph{betting scheme}
(described in Section \ref{sec:preliminaries}),
which goes back to de Finetti's approach to subjective probability \cite{def74} and
underlies, with variants, all these concepts.
Thus, convex lower previsions were studied in \cite{pel05},
showing that they include convex risk measures and other models.

In a recent paper \cite{pel15} we introduced two still weaker consistency concepts for
conditional lower previsions,
\emph{$2$-convexity} and \emph{$2$-coherence},
studying their basic properties in greater detail in \cite{pel16}.
Formally,
$2$-coherent and $2$-convex conditional lower previsions are a broad generalisation
of the $2$-coherent (unconditional) lower previsions in \cite[Appendix B]{wal91}.
Normalised capacities and niveloids are instances of $2$-convex lower previsions
\cite[Section 5]{pel16}.

A fundamental question is to detect
which properties from stronger consistency concepts are somehow retained by either $2$-convexity or $2$-coherence.
As shown in \cite{pel15, pel17},
a very relevant feature of theirs is that they are endowed with,
respectively,
a $2$-convex and a $2$-coherent \emph{natural extension}.
The properties of these extensions, exemplified in Proposition \ref{pro:properties_2_ne}, are formally
perfectly analogous to those of the natural extension for coherent lower previsions
(following Williams' coherence in the conditional framework \cite{wil07})
or the convex natural extension for convex conditional previsions \cite{pel05}.
In particular,
when finite,
they allow extending a lower prevision $\lpr$ from its domain $\dset$ to any larger $\dset^\prime\supset\dset$,
preserving its consistency properties.
For instance,
the $2$-coherent natural extension is $2$-coherent on $\dset^\prime$ if $\lpr$ is so on $\dset$,
and points out which commitments on our evaluation on $\dset^\prime$ are implied by the evaluation $\lpr$ on $\dset$.
Thus all these natural extensions solve a basic inferential problem.

The main purpose of this paper is to investigate further the features of the $2$-coherent and $2$-convex natural extension and to compare them with the natural extension for coherent or convex previsions.

A first consideration is immediate: since
$2$-coherence is weaker than coherence, inferences produced by the $2$-coherent natural extension will be generally vaguer than those
guaranteed by the coherent natural extension for the same $\lpr$,
and similarly with $2$-convexity versus convexity or coherence.
This points out a drawback of these weak consistency notions and is one reason why,
in our view,
they should not always be regarded as realistic candidates for replacing coherence or convexity.

Yet, the $2$-convex (termed $\LEDC$) and $2$-coherent ($\LED$) natural extensions may be helpful in determining the coherent ($\LE$) or convex ($\LEC$) natural extensions.
In fact,
there are significant instances where some or all of the four extensions  coincide.

After concisely presenting the necessary preliminary notions in Section \ref{sec:preliminaries},
we investigate one such case in Section \ref{sec:extensions_full}.
Here $\lpr$ is a lower conditional probability assessed in the `full' environment $\asetc$ of Definition \ref{def:special_structures}.
It is shown in Proposition \ref{pro:ext_prob_many} that the lower Goodman-Nguyen extension,
a standard and easy-to-apply procedure investigated in \cite{pel14},
corresponds to the four natural extensions onto any set of conditional events.
Hence they coincide,
if $\lpr$ is coherent on $\asetc$ (Proposition \ref{pro:sandwich_thm}).
The reader will notice,
entering the detail of Section \ref{sec:extensions_full},
that proofs for its results mostly refer to \cite{pel14}.
The reason for this is that \cite{pel14} was written before our introducing $2$-coherent and $2$-convex previsions in \cite{pel15,pel16}.
Indeed,
one motivation for their introduction was precisely the fact that most proofs in \cite{pel14} required weaker consistency assumptions than we initially expected.

In Section \ref{sec:extension_cond_low_prev},
a lower prevision $\lpr$ is initially defined on a structured set
$\lcond$ (cf. Definition \ref{def:def_dlin}) of conditional gambles,
representing a generalisation of a vector space to a conditional environment.
Hence we are considering a special,
but rather common,
situation.
In Proposition~\ref{pro:alt_nat_ext} we give an alternative expression for the coherent natural extension,
which is later needed and generalises a result in \cite{wal91} (cf. Corollary \ref{co:Walley_unconditional}).
After showing how to ensure finiteness for the relevant natural extensions,
Theorems \ref{thm:ext_coer}, \ref{thm:ext_conv}, \ref{thm:final_sandwich} and
Corollary~\ref{cor:equality_e_e2} present
instances where more different extensions coincide.
These results are discussed after Theorem \ref{thm:final_sandwich}.
The contents of this section deepen those of \cite{pel17},
where proofs were only partly supplied.

In Section \ref{sec:extension_lprob_to_lprev},
$\lpr$ is a lower probability assessment on the set $\asetpa$ of events logically dependent on an arbitrary partition $\prt$
(i.e. the powerset of $\prt$ in set-theoretical language),
and we consider its extensions to the gambles defined on $\prt$.
We first obtain two different formulae for the $2$-coherent natural extension $\LED$
(Section \ref{subsec:2_coherent_ne}) and
two for the $2$-convex natural extension $\LEDC$ (Section \ref{subsec:2_convex_ne}),
then discuss them  and a further formula for $\LEDC$ in Section \ref{subsec:more_on_ne}.
It turns out that the formulae operationally simplify the computation of $\LED$, $\LEDC$ when $\prt$ is finite,
while in general,
for any $\prt$ and a gamble $Z$,
$\LED$ and $\LEDC$ both depend on the assessment of the lower decumulative distribution function of $Z$,
$\lpr(Z\geq z)$, $z\in\rset$.
In this framework,
it is also possible to evaluate how vacuous $\LED$ may be:
for any gamble $Z$, it holds that $\LEDC(Z)\leq\inf Z+1$.
This lets us guess that equality results for different extensions comparable to those of the previous sections are not to be expected now.
And in fact,
in Section \ref{subsec:Choquet_extension} we discuss a different extension for a $2$-coherent $\lpr$,
the Choquet integral extension $\LC$,
showing that it is $2$-coherent (Proposition \ref{pro:Choquet_2c}) and that $\LC\geq\LED$,
the inequality being generally strict.
Since it is known that $\LC\leq\LE$,
with equality iff $\lpr$ is coherent and $2$-monotone \cite{wal81},
it will generally be the case that $\LE>\LED$.
In the discussion following Example \ref{exa:not_coincide} we note that the results in Section \ref{subsec:Choquet_extension} contribute to fixing the role of the Choquet integral extension: next to being a lower bound for $\LE$,
it is an upper bound for $\LED$.

While the assumptions in each of Sections \ref{sec:extensions_full}, \ref{sec:extension_cond_low_prev}, \ref{sec:extension_lprob_to_lprev}
are sufficient for the relevant extensions to be finite,
in Section \ref{sec:more_order_ne} we discuss characterisation for their finiteness (introduced in \cite{pel03bis, pel16, wal91}),
in terms of avoiding sure loss and weaker conditions,
in an unconditional setting.
We provide examples showing that even when some extensions are infinite
their partial ordering from Lemma \ref{lem:order_ne} cannot be improved.
Section \ref{sec:conclusions} summarizes the paper.

\section{Preliminaries}
\label{sec:preliminaries}
Let $\dset$ be an arbitrary set of conditional gambles,
that is,
the generic element of $\dset$ is $X|B$,
with $X$ gamble (a bounded random variable),
$B$ non-impossible event.
A \emph{conditional lower prevision} $\lpr:\dset\rightarrow\rset$ is a real map which,
behaviourally,
determines the supremum buying price $\lpr(X|B)$ of any $X|B\in\dset$.
This means that an agent should be willing to buy, or to bet in favour of, $X|B$,
for any price lower than $\lpr(X|B)$.
The agent's \emph{elementary gain} from the transaction/bet on
$X|B$ for $\lpr(X|B)$ is $I_B (X-\lpr(X|B))$.
Here $I_B$ is the indicator of event $B$.
Its role is that of ensuring that the purchased bet is called off and the money returned to the agent iff $B$ does not occur.
In the sequel,
we shall use the symbol $B$ for both event $B$ and its indicator $I_B$.

\subsection{Consistent lower previsions}
\label{subsec:consistent_lp}
A generic consistency requirement for $\lpr$ 
asks that no finite linear combination of elementary gains on elements of $\dset$,
with prices given by $\lpr$,
should produce a loss (bounded away from $0$) for the agent.
We obtain different known concepts by imposing constraints either on the number of terms in the linear combination or on their coefficients $s_i$:
\begin{definition}
	\label{def:consistency}
	Let $\LP:\dset\rightarrow\Rset$ be a given conditional lower prevision.
	\begin{itemize}
		\item[a)]
		$\LP$ is a \emph{coherent} conditional lower prevision on $\dset$ iff,
		for all $m\in\natsetp$,
		$\forall X_0|B_0,\ldots,X_m|B_m\in\dset$, $\forall s_0,\ldots,s_m\geq 0$,
		defining
		$\Ss=\bigvee\{B_{i}:s_{i}\neq 0, i=0,\ldots,m\}$ and
		$$\LG=\sum_{i=1}^{m}s_{i}B_{i}(X_{i}-\LP(X_{i}|B_{i}))-s_{0}B_{0}(X_{0}-\LP(X_{0}|B_{0})),$$
		it holds,
		whenever $\Ss\neq\varnothing$,
		that $\sup\{\LG|\Ss\}\geq 0$.
		\item[b)]
		$\lpr$ is \emph{$2$-coherent} on $\dset$ iff a) holds
		with $m=1$ (hence there are \emph{two} terms in $\LG$).
		\item[c)]
		$\lpr$ is \emph{convex} on $\dset$ iff a) holds with the
		additional convexity constraint $\sum_{i=1}^{m}s_i=s_0=1$.
		\footnote{
		The term \emph{convex} originates in our framework from the convexity constraint
		$\sum_{i=1}^{m}s_i=1$,
		differentiating convex lower previsions from coherent ones.
		Thus, convex lower previsions should not be confused with convex (or $2$-monotone) fuzzy measures, which under mild normalisation constraints are instead a special subset of coherent lower previsions.
		In this latter meaning, the term convex is often used as a synonym of supermodular
		\cite[p. 16]{den94},
		although this may be in general improper \cite[Definition 2.18 (ii)]{gra16}.		
		}
		\item[d)]
		$\lpr$ is \emph{$2$-convex} on $\dset$ iff c) holds with $m=1$,
		i.e., iff, $\forall X_0|B_0, X_1|B_1\in\dset$,
		we have that,
		defining $\LGDC=B_{1}(X_{1}-\LP(X_{1}|B_{1}))-B_{0}(X_{0}-\LP(X_{0}|B_{0}))$, $\sup(\LGDC|B_{0}\vee B_{1})\geq 0$.
		\item[e)]
		$\lpr$ is \emph{centered}, convex or $2$-convex, on $\dset$ iff it is convex or $2$-convex, respectively,
		and $\forall X|B\in\dset$, it is $0|B\in\dset$ and $\lpr(0|B)=0$.
	\end{itemize}
\end{definition}
Condition a),
which is Williams' coherence \cite[Definition 1]{wil07} in the stru--cture-free version of \cite[Definition 4]{pel09},
is clearly the strongest one.
Convexity is a relaxation of coherence, studied in \cite[Definition 12]{pel05}.
Given $\lpr$ on $\dset$,
the following relationships hold:
\begin{eqnarray}
	\label{eq:consistency_implications}
	\begin{array}{ll}
		\lpr \mbox{ coherent} \Rightarrow \lpr \mbox{ $2$-coherent} \Rightarrow \lpr \mbox{ $2$-convex}\\
		\lpr \mbox{ coherent} \Rightarrow \lpr \mbox{ convex} \Rightarrow \lpr \mbox{ $2$-convex}
	\end{array}
\end{eqnarray}
The consistency concepts recalled so far can be characterised by means of axioms on the special sets $\lcond$ defined next:
\begin{definition}
	\label{def:def_dlin}
	Let $\xset$ be a linear space of gambles and
	$\bset\subset\xset$ a set of (indicators of) events, such that
	$\Omega\in\bset$ and
	$\xset$ is stable by restriction,
	i.e. $BX\in\xset, \forall B\in\bset, \forall X\in\xset$.
	Setting $\bsetp=\bset\setminus\{\varnothing\}$,
	define
	$\lcond=\{X|B: X\in\xset, B\in\bsetp\}$.
\end{definition}
The sets $\lcond$ generalise linear spaces (obtained for $\bset=\{\varnothing,\Omega\}$)
containing real constants
(note for this that we may equivalently write $1=I_\Omega\in\bset$
instead of $\Omega\in\bset$ in Definition \ref{def:def_dlin}).
\begin{theorem}[Characterisation Theorems]
	\label{thm:characterisation}
	Let $\lpr:\lcond\rightarrow\rset$ be a conditional lower prevision.
	\begin{itemize}
		\item[a)]
		$\lpr$ is coherent on $\lcond$ if and only if \cite[Theorem 2]{pel09}
		\cite[p. 370]{wil07}
		\begin{itemize}
			\item[(A1)] $\quad\ \LP(X|B)-\LP(Y|B)\leq\sup\{X-Y|B\}$,
			$\forall X|B, Y|B\in\lcond$.
			\item[(A2)] $\quad\ \LP(\lambda X|B)=\lambda\LP(X|B),
			\forall X|B\in\lcond, \forall\lambda\geq 0$.
			\item[(A3)] $\quad\ \LP(X+Y|B)\geq\LP(X|B)+\LP(Y|B)$,
			$\forall X|B$, $Y|B\in\lcond$.
			\item[(A4)] $\quad\ \LP(A(X-\LP(X|A\wedge B))|B)=0,
			\forall X\in\xset,
			\forall A, B\in\bset:
			A\wedge B\neq\varnothing$.
		\end{itemize}
		\item[b)]
		$\lpr$ is $2$-coherent on $\lcond$ if and only if (A1), (A2), (A4) and the following axiom hold \cite[Proposition 9]{pel16}:
		\begin{itemize}
			\item[(A5)] $\quad\ \lpr(\lambda X|B)\leq \lambda\lpr(X|B)$, $\forall\lambda<0$.
		\end{itemize}
		\item[c)]
		$\lpr$ is convex on $\lcond$ if and only if (A1), (A4) and the following axiom hold  \cite[Theorem 8]{pel05}
		\begin{itemize}
			\item[(A6)] $\quad\ \LP(\lambda X+(1-\lambda)Y|B)\geq\lambda\LP(X|B)+(1-\lambda)\LP(Y|B),
				\forall X|B,Y|B\in\lcond, \forall \lambda\in (0,1)$.
		\end{itemize}
		\item[d)]
		$\lpr$ is $2$-convex on $\lcond$ if and only if (A1) and (A4) hold \cite[Proposition 2]{pel16}.
	\end{itemize}
\end{theorem}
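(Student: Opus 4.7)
The plan is to establish each of the four characterisations by proving both directions: that the relevant consistency concept implies the listed axioms (necessity), and that the axioms together recover the defining inequality of the consistency concept (sufficiency). The structured domain $\lcond$ from Definition~\ref{def:def_dlin} is essential here, since the linear space property of $\xset$ combined with the closure of $\xset$ under restriction by $\bset$ ensures that every gamble produced by the axioms and by the gain $\LG$ of Definition~\ref{def:consistency} remains within the domain of $\lpr$, so no extension issue arises during the manipulations.

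For necessity, each axiom is produced by specialising the inequality $\sup\{\LG|\Ss\}\geq 0$ to a particular finite linear combination. Axiom (A1) comes from $m=1$, $s_0=s_1=1$, $X_0=Y$, $X_1=X$, $B_0=B_1=B$, whose gain rearranges to $\sup\{X-Y|B\}\geq\lpr(X|B)-\lpr(Y|B)$. Homogeneity (A2) follows by comparing bets on $X$ and $\lambda X$ with reciprocally scaled coefficients and applying coherence twice to pin $\lpr(\lambda X|B)=\lambda\lpr(X|B)$; superadditivity (A3) follows similarly by betting on $X+Y$ against separate bets on $X$ and $Y$. Axiom (A4) requires the cross-conditioning construction with the called-off bet $A(X-\lpr(X|A\wedge B))$, taken with coefficient $+1$ and with coefficient $-1$ in turn, to pin its lower prevision to zero. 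Axioms (A5) and (A6) are obtained by analogous specialisations under the negative-sign or convexity constraint, and for (d) only (A1) and (A4) survive because the gain has just two terms and no constraint on coefficient sums.

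For sufficiency, the task is to reconstruct $\sup\{\LG|\Ss\}\geq 0$ for an arbitrary admissible gain. In (a), starting from $\LG=\sum_{i=1}^m s_i B_i(X_i-\lpr(X_i|B_i))-s_0 B_0(X_0-\lpr(X_0|B_0))$, axiom (A2) rescales the coefficients, (A3) iteratively combines the $m$ positive terms into a single lower prevision of a sum, and (A4) rewrites restricted gains so that they refer to a common conditioning event compatible with $\Ss$; finally (A1) applied to the reduced two-term expression closes the argument via contradiction with a strictly negative supremum. Parts (b) and (d) are simpler because the restriction $m=1$ eliminates the aggregation step, and (A5) compensates for the absence of (A3) in (b) by allowing negative scaling of the single negative term. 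Part (c) runs parallel to (a) but replaces the iterated use of (A3) with iterated convex combinations via (A6), exploiting $\sum_{i=1}^m s_i=1$. The main obstacle is the conditional aggregation in (a) and (c), where (A4) must be combined with (A3) or (A6) to synchronise gains on different conditioning events $B_i$ to the common event $\Ss=\bigvee B_i$ without assuming $\bset$ is closed under $\vee$; this is the technical heart of the proof and the reason why structure-free characterisations of this kind are nontrivial.
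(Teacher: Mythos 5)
The paper itself supplies no proof of Theorem~\ref{thm:characterisation}: each item is imported from the cited sources \cite{pel09,wil07,pel05,pel16}, so there is no internal argument to compare yours against and your proposal has to stand on its own. As an outline it captures the right architecture (necessity by specialising the gain $\LG$ of Definition~\ref{def:consistency}, sufficiency by reassembling an arbitrary admissible gain from the axioms), and the specialisations you indicate for (A1), (A2), (A3), (A4) and (A6) do work as described. Two steps, however, do not go through as written.

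The first is the necessity of (A5), which is not an ``analogous specialisation''. For (A2) your reciprocal-scaling trick works because cancelling the random parts of a bet on $X|B$ against a counter-bet on $\lambda X|B$ needs the ratio $s_1/s_0=\lambda$, admissible when $\lambda>0$; when $\lambda<0$ no choice of $s_0,s_1\geq 0$ in Definition~\ref{def:consistency} achieves the cancellation. Indeed, for $X$ with range $[-1,1]$ the assessment $\lpr(aX+b)=|a|+b$ on the span of $\{1,X\}$ satisfies every two-term gain condition with non-negative coefficients (by the reverse triangle inequality $|s_1a-s_0c|\geq s_1|a|-s_0|c|$) yet violates (A5). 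One must instead first establish $\lpr(X|B)+\lpr(-X|B)\leq 0$ --- in the cited source this uses the gain with two \emph{positively} weighted terms, i.e.\ $s_0$ of arbitrary sign --- and then reduce (A5) to (A2) via $\lpr(\lambda X|B)=|\lambda|\,\lpr(-X|B)\leq -|\lambda|\,\lpr(X|B)=\lambda\lpr(X|B)$; your proof must say where that inequality comes from. The second gap is in sufficiency: applying (A4) with $B=\Omega$, then (A2)/(A3) and (A1), bounds the \emph{unconditional} supremum of $\LG$; but $\LG$ vanishes outside $\Ss$, so $\sup\LG\geq 0$ is automatic whenever $\nega{\Ss}\neq\varnothing$ and says nothing about $\sup\{\LG|\Ss\}$, which is what coherence demands. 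The standard remedy --- assume $\sup\{\LG|\Ss\}\leq-\ep$, absorb $\ep I_{B_i}$ into one of the called-off bets via translation invariance and (A4), and derive a contradiction with the axioms --- is precisely the step you label ``the technical heart'' and then omit. Until it is written out, the sufficiency half of all four items remains unproven.
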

Every single axiom in Theorem \ref{thm:characterisation} is a \emph{necessary}
consistency requirement when $\lpr$ is defined on a generic set $\dset$
(including the relevant gambles).
This fact will often be useful in later proofs.

There is a number of further necessary conditions for the consistency of $\lpr$. Among these the following will be useful later and are implied by axiom (A1):
\begin{align}
	\label{eq:intern}
	&\inf X|B\leq\lpr(X|B)\leq\sup X|B, \forall X|B\in\dset\\
	\label{eq:trans}
	&\lpr(X+c\ |B)=\lpr(X|B)+c, \forall X|B\in\dset, \forall c\in\rset\\
	\label{eq:monot}
    &X|B\leq Y|B \mbox{ implies } \lpr(X|B)\leq\lpr(Y|B), \forall X|B,Y|B\in\dset.
\end{align}

A lower probability is formally the special instance of lower prevision that is defined on a set $\dset$ of gambles that are all indicators of (possibly conditional) events.
While $\dset$ may again be arbitrary,
in this paper we shall focus on the special domains for lower probabilities
defined next.
\begin{definition}
	\label{def:special_structures}
	Given an arbitrary partition $\prt$,
	$\asetpa$ is the set of all events logically dependent on $\prt$
	(the powerset of $\prt$, in set-theoretical language),
	$\asetpav=\asetpa\setminus\{\varnothing\}$,
	and $\asetc=\asetpa|\asetpav=\{A|B: A\in\asetpa, B\in\asetpav\}$.
\end{definition}
$\asetpa$ (and consequently, to a large extent, $\asetc$) is a `full' environment,
in the sense that it is closed under the logical operators
$\nega{ }$, $\vee$, $\wedge$.

\subsection{Natural extensions}
\label{subsec:natural_extensions}
Next we recall the definitions of the different natural extensions that appear in this paper.
The term `natural extension',
without further qualifications,
will denote
the coherent natural extension in Definition \ref{def_nat_ext}, a).
\begin{definition}[Various natural extensions]
	\label{def_nat_ext}
	Let $\lpr:\dset\rightarrow\rset$ be a conditional lower prevision,
	and $Z|A$ a conditional gamble.
	\begin{itemize}
		\item[a)]
		Define
		\begin{align*}
			L(Z|A)=&\{\alpha\in\rset:
			\sup\{\sum_{i=1}^{m}s_i B_i (X_i-\lpr(X_i|B_i))-A(Z-\alpha)|A\vee\Ss\}\\
			&<0, \mbox{ for some } m\geq 0, X_i|B_i\in\dset, s_i\geq 0, i=1,\ldots,m\},
		\end{align*}
		where $\Ss=\vee_{i=1}^{m}\{B_i:s_i\neq 0\}$.
		Then, the \emph{(coherent)} natural extension of $\lpr$ on $Z|A$ is
		$\LE(Z|A)=\sup L(Z|A)$.
		\item[b)]
		Define $L_2 (Z|A)$ putting $m=1$ in $L(Z|A)$,
		i.e.
		\begin{align*}
			\LD(Z|A)=&\{\alpha\in\rset:\sup\{s_1 B_1 (X_1-\lpr(X_1|B_1))- A(Z-\alpha)|A\vee\Ss\}\\
			&<0, \mbox{ for some } X_1|B_1\in\dset, s_1\geq 0\}.
		\end{align*}		 
		The \emph{$2$-coherent} natural extension of $\lpr$ on $Z|A$ is
		$\LED (Z|A)=\sup L_2 (Z|A)$. 
		\item[c)]
		Define $L_c (Z|A)$ from $L(Z|A)$,
		by adding the constraint $\sum_{i=1}^{m}s_i=1$, i.e.
		\begin{align*}
		L_c(Z|A)=&\{\alpha\in\rset:
		\sup\{\sum_{i=1}^{m}s_i B_i (X_i-\lpr(X_i|B_i))-A(Z-\alpha)|A\vee\Ss\}\\
		&<0, \mbox{ for some } m\geq 0, X_i|B_i\in\dset, s_i\geq 0, i=1,\ldots,m\\
		&\mbox{ with } \sum_{i=1}^{m}s_i=1\}.
		\end{align*}
		The \emph{convex} natural extension of $\lpr$ on $Z|A$ is
		$\LEC (Z|A)=\sup L_c (Z|A)$.
		\item[d)]
		Define $L_{2c}(Z|A)$ putting $m=1$ in $L_{c}(Z|A)$, 
		i.e.
		\begin{align*}
			L_{2c}(Z|A)=&\{\alpha\in\rset:\sup\{B(X-\lpr(X|B))-A(Z-\alpha)|A\vee B\}<0,\\
			&\mbox{for some} X|B\in\dset\}.
		\end{align*}
		The \emph{$2$-convex} natural extension $\LE_{2c}$ of $\lpr$ on $Z|A$ is $\LEDC=\sup L_{2c}(Z|A)$.
	\end{itemize}
\end{definition}

The properties of these four natural extensions are analogous
\cite{pel05, pel09, pel16}.
Here we state them for the $2$-convex natural extension.
For the properties of $\LE$, $\LED$, $\LEC$, replace $\LEDC$ and `$2$-convex'
with, respectively,
$\LE$ and `coherent', $\LED$ and `$2$-coherent', $\LEC$ and `convex'.

\begin{proposition}
	\label{pro:properties_2_ne}
	Let $\lpr:\dset\rightarrow\rset$ a conditional lower prevision,
	with $\dset\subset\lcond$.
	If $\LEDC$ is finite on $\lcond$, then
	\begin{itemize}
		\item[a)] $\LEDC(Z|A)\geq\lpr(Z|A)$, $\forall Z|A\in\dset$.
		\item[b)] $\LEDC$ is $2$-convex on $\lcond$.
		\item[c)] If $\lpr^{*}$ is $2$-convex on $\lcond$ and $\lpr^{*}(Z|A)\geq\lpr(Z|A)$, $\forall Z|A\in\dset$,
		then $\lpr^{*}(Z|A)\geq\LEDC(Z|A)$, $\forall Z|A\in\lcond$.
		\item[d)] $\lpr$ is $2$-convex on $\dset$ if and only if $\LEDC=\lpr$ on $\dset$.
		\item[e)] If $\lpr$ is $2$-convex on $\dset$, $\LEDC$ is its smallest $2$-convex extension on $\lcond$.
	\end{itemize}
\end{proposition}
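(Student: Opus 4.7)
The plan is to prove the five claims in the order (a), (c), (b), (d), (e), because the dominance assertion (c) is the workhorse for the uniqueness parts of (b), (d), and (e). Throughout I invoke the axiomatic characterisation of $2$-convexity from Theorem~\ref{thm:characterisation}(d): on a domain of the form $\lcond$, a lower prevision is $2$-convex iff (A1) and (A4) hold.

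Part (a) is routine: fix $\epsilon>0$ and plug $X|B=Z|A$ into the definition of $L_{2c}(Z|A)$, so that the elementary gain reduces to $-A\epsilon$ with supremum $-\epsilon<0$ on $A$; hence $\lpr(Z|A)-\epsilon\in L_{2c}(Z|A)$ and the claim follows by letting $\epsilon\to 0^+$. For (c) I argue by contradiction: if some $\alpha\in L_{2c}(Z|A)$ exceeds $\lpr^*(Z|A)$, witnessed by $X|B\in\dset$, then $\lpr^*\geq\lpr$ gives $B(X-\lpr^*(X|B))\leq B(X-\lpr(X|B))$, so the same witness certifies that the supremum is still strictly negative after replacing $\lpr$ by $\lpr^*$; but $2$-convexity of $\lpr^*$ applied to $Z|A$ and $X|B$ combined with the pointwise inequality $-A(Z-\alpha)\geq -A(Z-\lpr^*(Z|A))$ (valid because $\alpha>\lpr^*(Z|A)$ and $A\geq 0$) forces the same supremum to be $\geq 0$, a contradiction.

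For (b) I would verify (A1) and (A4) for $\LEDC$ on $\lcond$. Axiom (A1) transfers a witness: given $\alpha\in L_{2c}(X|B)$ via $X_1|B_1$, the pointwise inequality $B(Y-(\alpha-s))\geq B(X-\alpha)$ with $s=\sup\{X-Y|B\}$ shows that the same $X_1|B_1$ certifies $\alpha-s\in L_{2c}(Y|B)$, and taking suprema yields $\LEDC(X|B)-\LEDC(Y|B)\leq s$. The main obstacle is (A4). Setting $\mu=\LEDC(X|A\wedge B)$, one must show $\LEDC(A(X-\mu)|B)=0$. The key identity is $(A\wedge B)(X-\mu)=B\cdot A(X-\mu)$, which allows converting any witness for $\mu-\epsilon\in L_{2c}(X|A\wedge B)$ into a witness for $-\epsilon\in L_{2c}(A(X-\mu)|B)$, giving the $\geq 0$ direction; the $\leq 0$ direction argues conversely, deriving from any hypothetical $\alpha>0$ in $L_{2c}(A(X-\mu)|B)$ the conclusion $\mu+\alpha\in L_{2c}(X|A\wedge B)$, which contradicts $\mu=\sup L_{2c}(X|A\wedge B)$. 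The delicate part is the indicator bookkeeping across the two differing supremum domains $(A\wedge B)\vee B_1$ versus $B\vee B_1$, where the extra region $(\neg A\wedge B)\setminus B_1$ must be handled separately using the sign of $\alpha$ (or $\epsilon$).

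Finally, (d) follows from (a), which gives $\LEDC\geq\lpr$ on $\dset$, combined with an instance of (c)'s argument specialised to $\lpr^*=\lpr$ on $\dset$, which produces the reverse inequality on $\dset$; the converse in (d) is trivial from (b), since the restriction of the $2$-convex $\LEDC$ to $\dset$ is again $2$-convex. Part (e) is then immediate: (b) and (d) exhibit $\LEDC$ as a $2$-convex extension of $\lpr$ to $\lcond$, while (c) shows any other $2$-convex extension dominates $\LEDC$ pointwise on $\lcond$.
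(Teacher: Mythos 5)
Your proposal is correct, but it cannot be compared line-by-line with ``the paper's own proof'' for the simple reason that the paper gives none: Proposition~\ref{pro:properties_2_ne} is stated with a pointer to \cite{pel05,pel09,pel16}, where the analogous properties are established for each of the four natural extensions. What you supply is therefore a self-contained direct verification where the paper defers to references. Your ordering (a), (c), (b), (d), (e) is the natural one, since (c) does indeed do the work in (d) and (e); parts (a), (c) and axiom (A1) are exactly the one-line witness-transfer arguments one expects, and your treatment of (A4) is the genuinely nontrivial point. There the identity $(A\wedge B)(X-\mu)=B\cdot A(X-\mu)$ together with $B\epsilon\geq (A\wedge B)\epsilon$ gives the pointwise comparison of gains, and the residual region $B\wedge\nega{A}\wedge\nega{B_1}$ of the larger conditioning event $B\vee B_1$ carries the constant value $-\epsilon<0$ in the ``$\geq 0$'' direction, while in the ``$\leq 0$'' direction the inclusion $(A\wedge B)\vee B_1\Rightarrow B\vee B_1$ goes the right way; this is precisely the bookkeeping you flag, and it closes. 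Two small points deserve to be made explicit in a full write-up: first, $L_{2c}(Z|A)$ is downward closed (if $\alpha'<\alpha$ the same witness works, since $-A(Z-\alpha')\leq -A(Z-\alpha)$), which is what licenses passing from $\mu=\sup L_{2c}(X|A\wedge B)$ to $\mu-\epsilon\in L_{2c}(X|A\wedge B)$; second, the finiteness hypothesis on $\LEDC$ is what guarantees $L_{2c}(\cdot)$ is nonempty and bounded above, so that these manipulations are not vacuous. With those remarks added, your argument is a complete proof of the proposition.
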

By Proposition \ref{pro:properties_2_ne},
$\LEDC$ dominates $\lpr$ on $\dset$ (by a)),
characterises $2$-convexity (by d)) and
is the least-committal $2$-convex extension of $\lpr$ on $\dset^*$ (by c), e)).
Similarly for $\LE$, $\LEC$, $\LED$.

Given a lower prevision $\lpr$ on $\dset$,
its natural extensions $\LE$, $\LED$, $\LEC$, $\LEDC$ will generally be different, and ordered as follows.
\begin{lemma}
	\label{lem:order_ne}
	Given $\lpr:\dset\rightarrow\rset$, it holds that
	\begin{eqnarray}
		\label{eq:order_ne}
		\begin{array}{ll}
			\LE\geq\LED\geq\LEDC\\
			\LE\geq\LEC\geq\LEDC
		\end{array}
	\end{eqnarray}
\end{lemma}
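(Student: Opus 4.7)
The proof is essentially bookkeeping: every one of the four natural extensions is defined as the supremum of a set of the form
$\{\alpha\in\rset:\dots\}$, and all four sets sit in the same ambient space, differing only in the constraints placed on the parameters $m$, $s_i$, and on the terms appearing in the elementary gain. The plan is therefore to verify four set inclusions and then invoke monotonicity of the supremum.

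First I would check that $\LD(Z|A)\subseteq L(Z|A)$: an $\alpha$ witnessed in $\LD(Z|A)$ by some $X_1|B_1\in\dset$ and $s_1\geq 0$ is immediately witnessed in $L(Z|A)$ by taking $m=1$ and the same $(s_1,X_1|B_1)$. Analogously, $L_c(Z|A)\subseteq L(Z|A)$ because $L_c$ only adds the convexity constraint $\sum_{i=1}^{m}s_i=1$ to the defining condition of $L$; any witness for $L_c$ is a witness for $L$.

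Next I would verify the two inclusions into the weakest-extension set. Namely, $L_{2c}(Z|A)\subseteq \LD(Z|A)$: if $\alpha\in L_{2c}(Z|A)$ is certified by $X|B\in\dset$ with $\sup\{B(X-\lpr(X|B))-A(Z-\alpha)\mid A\vee B\}<0$, then choosing $s_1=1$, $X_1|B_1=X|B$ in the definition of $\LD$ certifies $\alpha\in \LD(Z|A)$. Similarly $L_{2c}(Z|A)\subseteq L_c(Z|A)$, via the choice $m=1$, $s_1=1$ (which automatically satisfies the convexity constraint).

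Finally, since $A\subseteq B$ implies $\sup A\leq \sup B$ for subsets of $\rset$, the four inclusions above yield $\LED\leq\LE$, $\LEC\leq\LE$, $\LEDC\leq\LED$ and $\LEDC\leq\LEC$, which is exactly the content of (\ref{eq:order_ne}). There is no real obstacle here; the only point worth checking carefully is that the constraints on the $s_i$ in each of the four sets are indeed nested in the way just described, i.e. that passing from the stronger to the weaker extension corresponds to \emph{restricting}, not enlarging, the set of admissible witnesses for $\alpha$.
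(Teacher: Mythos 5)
Your proposal is correct and follows essentially the same route as the paper, which likewise reduces the claim to the two inclusion chains $\LDC(Z|A)\subseteq\LD(Z|A)\subseteq L(Z|A)$ and $\LDC(Z|A)\subseteq L_c(Z|A)\subseteq L(Z|A)$ and then takes suprema. Your explicit verification that each witness transfers (e.g.\ taking $m=1$, $s_1=1$ so that the conditioning events $A\vee\Ss$ and $A\vee B$ agree) is just a more detailed spelling-out of what the paper leaves implicit.
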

\begin{proof}
	It is easy to realise that \eqref{eq:order_ne} holds recalling
	Definition \ref{def_nat_ext},
	since it implies, $\forall Z|A$,
	$\LDC(Z|A)\subseteq\LD(Z|A)\subseteq L(Z|A)$ and
	$\LDC(Z|A)\subseteq L_c(Z|A)\subseteq L(Z|A)$.
\end{proof}
It may also be the case that some among $\LE$, $\LED$, $\LEC$, $\LEDC$ are infinite.
But even when being finite,
some or all of them may differ also considerably,
while other ones may possibly coincide.
For an illustration, see the next simple example.
(Comparisons involving infinite extensions are postponed to Section \ref{sec:more_order_ne}.)
\begin{example}
	\label{exa:different_ne}
	Let $\dset=\{X\}$,
	where $X$ may only take the values $0$ and $1$.
	Assign $\lpr(X)\in (0,1)$,
	which is clearly coherent, hence $2$-coherent, convex and $2$-convex, on $\dset$.
	Its natural extension $\LE$ on $\{2X\}$ is $\LE(2X)=2\lpr(X)$
	by (A2),
	because $\LE$ is coherent on $\{X,2X\}$ and coincides with $\lpr$ on $X$.
	By the same reasoning, also $\LED(2X)=2\lpr(X)$.
	
	As for $\LEC(2X)$ and $\LEDC(2X)$,
	recalling Definition \ref{def_nat_ext} b) and d), we get
	\begin{align*}
	L_c (2X)=\LDC(2X)=&\{\alpha:\sup\{(X-\lpr(X)-(2X-\alpha))\}<0\}=\\
	&\{\alpha:\alpha<\inf X+\lpr(X)\}=(-\infty,\lpr(X)).
	\end{align*}
	Hence, $\LEC(2X)=\LEDC(2X)=\lpr(X)$.
	
	Summing up, in this example we have $\LE(2X)=\LED(2X)>\LEC(2X)=\LEDC(2X)$.
\end{example}
The following sufficient conditions will guarantee the finiteness of
$\LE$, $\LED$, $\LEC$, $\LEDC$ in the next three sections.
\begin{proposition}[\cite{pel05, pel16}]
	\label{pro:finiteness_ne}
	Given $\lpr:\dset\rightarrow\rset$ and $Z|A$:
	\begin{itemize}
		\item[a)]
		If $\lpr$ is coherent ($2$-coherent),
		$\LE(Z|A)$ ($\LED(Z|A)$) is finite.
		\item[b)]
		If $\lpr$ is centered convex (centered $2$-convex) and $0|A\in\dset$,
		then $\LEC(Z|A)$ ($\LEDC(Z|A)$) is finite.
	\end{itemize}
\end{proposition}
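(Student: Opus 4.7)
The plan is to establish, for each of the four extensions $\LE,\LED,\LEC,\LEDC$, the two-sided bound $\inf Z|A \leq \lpr^{*}(Z|A) \leq \sup Z|A$; both sides are finite since $Z$ is a gamble, which yields the claim. The lower bound reduces to exhibiting degenerate combinations in the appropriate $L$-set of Definition \ref{def_nat_ext}, whereas the upper bound is by contradiction using the corresponding consistency axiom of Definition \ref{def:consistency}.

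For the lower bound I would show that the interval $(-\infty,\inf Z|A)$ lies in the relevant $L$-set. In $L(Z|A)$ take $m=0$: the defining inequality collapses to $\sup\{-A(Z-\alpha)|A\}=\alpha-\inf Z|A<0$. In $L_2(Z|A)$ pick any $X_1|B_1\in\dset$ and set $s_1=0$, so that $\Ss=\varnothing$ and the same reduction applies. In $L_c(Z|A)$ and $L_{2c}(Z|A)$ the centered hypothesis plays its first role: the choice $m=1$, $s_1=1$, $X_1|B_1=0|A\in\dset$ kills the summand because $\lpr(0|A)=0$, and again any $\alpha<\inf Z|A$ works.

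For the upper bound I would argue by contradiction: suppose some $\alpha>\sup Z|A$ belongs to the $L$-set via a combination $(s_i,X_i|B_i)_{i=1}^{m}$, set $\LG_0=\sum_{i=1}^{m}s_i B_i(X_i-\lpr(X_i|B_i))$ and $\Ss=\bigvee\{B_i:s_i>0\}$, and split on whether $A\cap\Ss^{C}\neq\varnothing$ or $A\subseteq\Ss$. In the first sub-case all $B_i$ vanish on $A\cap\Ss^{C}$, so the inequality degenerates to $\alpha<Z$ on that set, forcing $\alpha<\sup Z|A$, a contradiction. In the second sub-case, restricting the inequality separately to $A$ (where $Z-\alpha<0$ by the standing assumption on $\alpha$) and to $\Ss\setminus A$ yields $\LG_0<0$ throughout $\Ss$, i.e.\ $\sup\{\LG_0|\Ss\}<0$; for $\LE$ (resp.\ $\LED$) this directly contradicts coherence (resp.\ $2$-coherence) of $\lpr$ applied with $s_0=0$ to the $m$ terms (resp.\ to the single term $X_1|B_1$).

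For $\LEC$ and $\LEDC$ the case split is the same, but the contradiction must be drawn from axiom (A6) or Definition \ref{def:consistency}\,d), and the centered hypothesis plays its second role here. Pairing the hypothesised combination with the `sold' term $X_0|B_0=0|A\in\dset$ and using $\lpr(0|A)=0$, convexity (resp.\ $2$-convexity) collapses to $\sup\{\LG_0|A\vee\Ss\}\geq 0$. Hence there is a witness $\omega^{*}\in A\vee\Ss$ with $\LG_0(\omega^{*})\geq 0$, and feeding $\omega^{*}$ back into the $L_c$ (resp.\ $L_{2c}$) inequality forces $A(\omega^{*})=1$ and $Z(\omega^{*})>\alpha$, contradicting $\alpha>\sup Z|A$. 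I expect the upper bound in the regime $A\subseteq\Ss$, and especially the clean orchestration of the centered hypothesis in the convex cases, to be the main obstacle; everything else amounts to unwinding Definition \ref{def_nat_ext}.
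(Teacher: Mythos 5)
The paper itself offers no proof of this proposition (it is imported from \cite{pel05, pel16}), but your argument is correct and is essentially the standard one behind those references: the two-sided bound $\inf Z|A\leq\cdot\leq\sup Z|A$, with the lower bound obtained from degenerate elements of the relevant $L$-set (using $0|A\in\dset$ and $\lpr(0|A)=0$ precisely where the convexity constraint forbids an empty combination) and the upper bound by feeding the hypothesised combination back into Definition \ref{def:consistency} with $s_0=0$, resp.\ with $X_0|B_0=0|A$. The one rough edge is the ``witness'' step in the convex cases: $\sup\{\LG_0|A\vee\Ss\}\geq 0$ need not be attained at any $\omega^{*}$; the clean repair, which also absorbs your case split, is to observe that $\alpha>\sup Z|A$ gives $A(Z-\alpha)\leq 0$ on $A\vee\Ss$, hence $\sup\{\LG_0|A\vee\Ss\}\leq\sup\{\LG_0-A(Z-\alpha)|A\vee\Ss\}<0$, contradicting the consistency requirement directly.
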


\subsection{The Choquet integral}
\label{subsec:Choquet_integral}
In Section \ref{subsec:Choquet_extension} we shall utilise the Choquet integral. This integral is studied in a general context e.g. in \cite{den94}.
The shorter presentation in \cite[Appendix C]{tro14} is very clear and by far sufficient for the developments in this paper.
Actually,
we shall only need the notions briefly recalled here,
adapted from \cite{tro14}.

Let $\mu:\asetpa\rightarrow\rset$ be an uncertainty measure which is \emph{monotone},
meaning that $A\Rightarrow B$ implies $\mu(A)\leq\mu(B)$,
and \emph{normalised}, i.e. $\mu(\varnothing)=0$, $\mu(\Omega)=1$,
that is $\mu$ is a (normalised) capacity.
The \emph{conjugate} $\muc$ of $\mu$, defined as $\muc(A)=\mu(\Omega)-\mu(\nega{A})$,
$\forall A\in\asetpa$,
is monotone and normalised too, as can be easily checked.

Defining $\lset$ as the set of all gambles on $\prt$,
for each $X\in\lset$ we can evaluate
\begin{eqnarray}
\label{eq:Choquet_def}
(C) \int Xd\mu=(R)\int_{0}^{+\infty}\mu(X^+\geq x)dx-(R)\int_{0}^{+\infty}\muc(X^-\geq x)dx
\end{eqnarray}
where $X^+=\max(X,0)$, $X^-=-\min(X,0)$,
and the right-hand side integrals are generalised Riemann integrals.
Since $X$ is bounded, their difference $(C)\int Xd\mu$ can be shown to be well-defined \cite{tro14} and is called the \emph{Choquet integral} of $X$ with respect to $\mu$.
\begin{proposition}[Basic properties of the Choquet integral (\cite{tro14}, Appendix C, Section C.3)]
\label{pro:Choquet_int_prop}
Let $X\in\lset$. Then
\begin{itemize}
\item[a)]
${\displaystyle \cint I_A \dm=\mu(A), \forall A\in\asetpa}$
\item[b)]
${\displaystyle\cint\lambda X\dm=\lambda\cdot\cint X\dm, \forall \lambda\geq 0}$ (non-negative homogeneity)
\item[c)]
${\displaystyle\cint -X\dm=-\cint X\dmc}$
\item[d)]
${\displaystyle\cint (X+k)\dm=\cint X\dm + k, \forall k\in\rset}$ (constant additivity)
\item[e)]
${\displaystyle\cint X\dm\leq\cint Y\dm, \forall X, Y: X\leq Y}$ (monotonicity)
\item[f)]
${\displaystyle\cint X\dm=\inf X+\rint{\inf X}{\sup X} \mu(X\geq x)dx}$
\item[g)]
If $X$ is simple, i.e. has finitely many distinct values $x_1<x_2<\ldots<x_n$, then
\begin{eqnarray*}
\label{eq:Choquet_simple}
\cint X\dm=x_1+\sum_{i=2}^{n}(x_i-x_{i-1})\mu(X\geq x_i)
\end{eqnarray*}
\end{itemize}
\end{proposition}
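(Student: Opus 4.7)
The plan is to derive each item directly from the definition (\ref{eq:Choquet_def}), exploiting standard properties of the generalised Riemann integrals of the level-set function $x\mapsto\mu(X\geq x)$ together with the duality $\muc(A)=1-\mu(\nega{A})$. Items (a), (b), (c), (e) drop out essentially from the definition, while (d), (f), (g) form a linked cluster whose technical core is the representation (f).

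For (a), take $X=I_A$: then $X^+=I_A$, $X^-=0$, and $\mu(I_A\geq x)$ equals $\mu(A)$ on $(0,1]$ and $0$ elsewhere, so $\cint I_A\dm=\mu(A)$. For (b) with $\lambda\geq 0$, the change of variable $u=x/\lambda$ in both integrals, combined with $(\lambda X)^{\pm}=\lambda X^{\pm}$, produces non-negative homogeneity. For (c), observe that $(-X)^+=X^-$ and $(-X)^-=X^+$, so the two integrals in (\ref{eq:Choquet_def}) get swapped when $X$ is replaced by $-X$; the involution $\bar{\muc}=\mu$ (immediate from the definition of conjugate) then turns the resulting expression into $-\cint X\dmc$. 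For (e), the inequality $X\leq Y$ gives $\{X^+\geq x\}\subseteq\{Y^+\geq x\}$ and $\{Y^-\geq x\}\subseteq\{X^-\geq x\}$ pointwise, so monotonicity of $\mu$ and $\muc$ transfers into the integrands with the correct signs.

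The heart of the matter is (f), from which both (d) and (g) follow quickly. My approach is a case distinction on the signs of $\inf X$ and $\sup X$. When $\inf X\geq 0$ the negative part vanishes and the first integral in (\ref{eq:Choquet_def}) splits as $\rint{0}{\inf X}1\,dx+\rint{\inf X}{\sup X}\mu(X\geq x)dx=\inf X+\rint{\inf X}{\sup X}\mu(X\geq x)dx$, matching (f). When $\sup X\leq 0$ the positive part vanishes, and the change of variable $u=-x$, together with $\muc(X^-\geq x)=\muc(X\leq -x)$ and the duality $\muc(X\leq u)=1-\mu(X>u)$ (with $\mu(X>u)$ and $\mu(X\geq u)$ agreeing outside a countable set, hence sharing the same Riemann integral), recasts the single remaining integral in the form of (f). The mixed case $\inf X<0<\sup X$ is obtained by adding the two regimes on the disjoint intervals $[0,\sup X]$ and $[\inf X,0]$. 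Constant additivity (d) is then immediate from (f) by a linear shift of the integration variable, and (g) specialises (f) to a simple $X$, whose level function $x\mapsto\mu(X\geq x)$ is constant on each $[x_{i-1},x_i)$ with value $\mu(X\geq x_i)$, so that the Riemann integral collapses into the stated finite sum. The main technical obstacle is keeping the $>$ versus $\geq$ distinction and the positive-versus-negative-part bookkeeping straight; once that is handled, the remaining algebra is routine.
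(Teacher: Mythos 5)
Your argument is correct. Note first that the paper does not actually prove Proposition \ref{pro:Choquet_int_prop}: all seven items are imported from \cite{tro14} (Appendix C, Section C.3), so there is no internal proof to compare against, and your direct verification from \eqref{eq:Choquet_def} is a legitimate self-contained substitute. The two genuinely delicate points are both handled properly: the involution $\overline{\muc}=\mu$ needed for c), and, in the negative-range part of f), the passage from $\muc(X\leq u)=1-\mu(X>u)$ to an integrand written with $\mu(X\geq u)$. For the latter, your observation that the two non-increasing level functions $u\mapsto\mu(X>u)$ and $u\mapsto\mu(X\geq u)$ can differ only on the (at most countable) discontinuity set of the latter, and hence have equal Riemann integrals, is exactly the right justification and is the step most often glossed over; it is what makes d) and g) follow cleanly from f). Two cosmetic remarks: in b) the case $\lambda=0$ should be dispatched separately (both sides vanish), since the substitution $u=x/\lambda$ is unavailable there; and in g) the level function is constant equal to $\mu(X\geq x_i)$ on the half-open interval $(x_{i-1},x_i]$ rather than $[x_{i-1},x_i)$ --- immaterial for the Riemann integral, but worth stating correctly. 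Neither point affects the validity of the proof.
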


\section{Goodman-Nguyen extensions of `full' conditional lower probabilities}
\label{sec:extensions_full}
The starting point of the extension problems we consider in this section is a lower probability assignment on $\asetc=\asetcond$, where $\prt$ is a given arbitrary partition (cf. Definition \ref{def:special_structures}).
We wish to extend $\lpr$ on $\eset\supseteq\asetc$, arbitrary set of conditional events,
in a least-committal way and preserving the consistency properties that we may suppose $\lpr$ already has on $\asetc$.

In the case that $\lpr$ is coherent or centered convex on $\asetc$,
this problem has been solved in \cite[Section 3.1]{pel14} by means of the \emph{Goodman-Nguyen (GN) relation} and the related concepts
of \emph{inner} and \emph{outer conditional event},
and of \emph{lower} and \emph{upper GN-extension}.
Let us briefly recall these notions.

The \emph{Goodman-Nguyen relation} $\lgn$ is a generalisation to conditional events
of the implication relation (or inclusion, in set-theoretic language) \cite{goo88, pel14}:
\begin{eqnarray}
\label{eq:GN_relation}
A|B\lgn C|D \mbox{ iff } A\wedge B\Rightarrow C\wedge D \mbox{ and } \nega{C}\wedge D\Rightarrow \nega{A}\wedge B.
\end{eqnarray}
Given the partition $\prt$,
take an arbitrary conditional event $C|D$, not necessarily belonging to $\asetc$
(the interesting case is $C|D\notin\asetc$, as we plan to extend $\lpr$ to $C|D$).
We suppose hereafter that $C|D$ is \emph{non-trivial}
($C\wedge D\neq\varnothing$, $C\wedge D\neq D$).

Define
$m(C|D)=\{A|B\in\asetc: A|B\lgn C|D\}$,
and $M(C|D)=\{A|B\in\asetc: C|D\lgn A|B\}$.

Recalling that,
given an event $E$,
its \emph{inner event} is $E_*=\bigvee\limits_{e \in\prt:\ e\Rightarrow E} e$
and its \emph{outer event} is
$E^*=\bigvee\limits_{e\in\prt:\ e\wedge E\neq\varnothing} e$,
we have that \cite{pel14}:
\begin{proposition}
	\label{pro:set_asterisks}
	Given $C|D$, the sets $m(C|D)$, $M(C|D)$ are non-empty and have,
	respectively, a maximum $(C|D)_*$ and a minimum $(C|D)^*$
	conditional event w.r.t. $\lgn$,
	\begin{equation}
	\label{eq:max_cond_event}
	\begin{array}{lll}
	(C|D)_*=(C\wedge D)_*|[(C\wedge D)_*\vee(\nega{C}\wedge D)^*],\\
	(C|D)^*=(C\wedge D)^*|[(C\wedge D)^*\vee(\nega{C}\wedge D)_*].
	\end{array}
	\end{equation}
\end{proposition}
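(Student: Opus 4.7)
The plan is to verify the formulas in \eqref{eq:max_cond_event} directly. Setting $A_* = (C\wedge D)_*$ and $E^* = (\nega C \wedge D)^*$, I would first note that, since $\prt$ is a partition and $A_*, E^*$ are unions of atoms of $\prt$, both belong to $\asetpa$; the non-triviality assumption on $C|D$ forces $\nega C \wedge D \neq \varnothing$, whence $E^* \neq \varnothing$ and $(C|D)_* = A_* | (A_* \vee E^*) \in \asetc$. Membership $(C|D)_* \in m(C|D)$ then reduces to checking the two clauses of \eqref{eq:GN_relation}: $A_* \wedge (A_* \vee E^*) = A_* \Rightarrow C\wedge D$ by the definition of the inner event, while $\nega C \wedge D \Rightarrow E^* \Rightarrow A_* \vee E^*$ and, since $A_* \Rightarrow C\wedge D$ is disjoint from $\nega C \wedge D$, also $\nega C \wedge D \Rightarrow \nega{A_*}$; together these give $\nega C \wedge D \Rightarrow \nega{A_*} \wedge (A_* \vee E^*)$. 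In particular, $m(C|D)\neq\varnothing$.

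For maximality, let $A|B \in m(C|D)$ and replace $A$ with $A\wedge B$ (leaving the conditional event unchanged) so that $A \Rightarrow B$, with $A, B \in \asetpa$. Then $A|B \lgn (C|D)_*$ amounts to $A \Rightarrow A_*$ and $\nega{A_*} \wedge (A_* \vee E^*) \Rightarrow \nega A \wedge B$. The first holds because $A \Rightarrow C\wedge D$ and $A$ is a union of atoms of $\prt$, so each such atom is contained in the inner event $A_* = (C\wedge D)_*$. For the second, I would argue atom by atom: any atom $e$ of $\prt$ with $e \Rightarrow (A_* \vee E^*) \wedge \nega{A_*}$ must lie in $E^*$, hence $e$ meets $\nega C \wedge D$; the hypothesis $\nega C \wedge D \Rightarrow \nega A \wedge B \Rightarrow B$, together with the fact that $B$ is a union of atoms, forces $e \Rightarrow B$, while $A \Rightarrow A_*$ and $e \not\Rightarrow A_*$ imply $e \wedge A = \varnothing$, so $e \Rightarrow \nega A \wedge B$ as required.

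For the minimum $(C|D)^*$ I would invoke duality. Relation \eqref{eq:GN_relation} is symmetric under $A|B \mapsto \nega A | B$, in the sense that $A|B \lgn C|D$ iff $\nega C | D \lgn \nega A | B$; hence this map is an order-reversing involution of $\asetc$, and $M(C|D)$ has a minimum iff $m(\nega C | D)$ has a maximum, the two corresponding under negation. Applying the already proved part to $\nega C | D$ yields $(\nega C | D)_* = (\nega C \wedge D)_* | [(\nega C \wedge D)_* \vee (C\wedge D)^*]$; replacing the numerator by its negation on the same denominator, and using that the atoms of $(C\wedge D)^*$ are disjoint from those of $(\nega C \wedge D)_*$, reproduces exactly the claimed formula for $(C|D)^*$.

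The main obstacle is the second clause in the maximality step, where the structural feature of $\asetpa$ (every element being a union of atoms of $\prt$) is essential: without it a generic element of $m(C|D)$ need not be comparable with the explicit candidate and the atom-by-atom argument would break down.
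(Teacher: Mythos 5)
Your proof is correct. Note that the paper itself gives no proof of Proposition \ref{pro:set_asterisks}: it is imported verbatim from \cite{pel14}, so there is no in-text argument to compare against. Taken on its own merits, your argument is complete and sound: the membership check for $(C|D)_*$, the atom-by-atom maximality argument (where you rightly identify that the reduction to $A\Rightarrow B$ and the fact that every element of $\asetpa$ is a union of atoms of $\prt$ are the load-bearing steps), and the duality $A|B\lgn C|D\iff \nega{C}|D\lgn\nega{A}|B$ used to transfer the maximum of $m(\nega{C}|D)$ to the minimum of $M(C|D)$ all hold. The only point that deserves the explicit care you gave it is the last one: the dual construction yields the numerator $\nega{(\nega{C}\wedge D)_*}$ rather than $(C\wedge D)^*$ literally, and these agree only as conditional events on the common conditioning event $(C\wedge D)^*\vee(\nega{C}\wedge D)_*$, precisely because $(C\wedge D)^*$ and $(\nega{C}\wedge D)_*$ share no atom; you state and use exactly this. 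You also correctly invoke the non-triviality assumption $C\wedge D\neq\varnothing$, $\nega{C}\wedge D\neq\varnothing$ both to guarantee that the conditioning events are non-empty and to ensure that $\nega{C}|D$ is again non-trivial so that the first part applies to it.
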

$(C|D)_*$ and $(C|D)^*$ are the \emph{inner}, respectively \emph{outer conditional event} of $C|D$ and
\eqref{eq:max_cond_event} shows that they are obtained as functions of
unconditional inner and outer events.
Obviously,
$(C|D)_*=(C|D)^*=C|D$ iff $C|D\in\asetc$.

In general,
we term \emph{lower GN-extension} (\emph{upper GN-extension}) of $\lpr$ on $\eset$ the extension defined by
$\lpr_{L}(C|D)=\lpr((C|D)_*)$
(respectively $\lpr_{U}(C|D)=\lpr((C|D)^*)$), $\forall C|D\in\eset$.

As already hinted in the Introduction,
the results in \cite{pel14} about GN--extensions apply,
with minor variations in their proofs, to $2$-coherent and centered $2$-convex
lower (and upper) previsions.
They are fundamental in solving the extension problems of this section,
as we are now going to show.

Suppose first that $\eset=\asetc\cup\{C|D\}$,
that is we add just one event to $\asetc$.
The following proposition generalises \cite[Proposition 4]{pel14},
and both results may be viewed as imprecise probability versions of de Finetti's
\emph{Fundamental Theorem of Probability} \cite{def37}.
\begin{proposition}
	\label{pro:ext_prob}
	Let $\lpr$ be $2$-coherent (alternatively, centered $2$-convex) on $\asetc$.
	Any extension $\lpr^\prime$ of $\lpr$ on $\eset=\asetc\cup\{C|D\}$ is $2$-coherent (alternatively, centered $2$-convex)
	if and only if $\lpr^\prime (C|D)\in [\lpr((C|D)_*), \lpr((C|D)^*)]$.
\end{proposition}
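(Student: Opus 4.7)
The plan is to adapt the argument of \cite[Proposition 4]{pel14}, which settled the coherent and centered convex cases, to the weaker consistency notions at hand. The ``minor variations'' alluded to in the text amount to checking a GN-monotonicity lemma for 2-coherence (resp.\ centered 2-convexity), after which the two implications fall out along the same lines as in \cite{pel14}.

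\emph{Necessity.} I would first establish that 2-coherence (resp.\ centered 2-convexity) of $\lpr'$ on $\eset$ implies monotonicity with respect to $\lgn$: if $E_1|F_1, E_2|F_2 \in \eset$ and $E_1|F_1 \lgn E_2|F_2$, then $\lpr'(E_1|F_1) \leq \lpr'(E_2|F_2)$. To see this I apply Definition~\ref{def:consistency}~(b) (resp.\ (d)) with $X_1|B_1 := E_2|F_2$, $X_0|B_0 := E_1|F_1$, $s_0=s_1=1$, and enumerate the atoms of the partition generated by $E_1, F_1, E_2, F_2$: by \eqref{eq:GN_relation}, the atom $E_1 \wedge F_1 \wedge \nega{E_2} \wedge F_2$ is empty and likewise $\nega{E_1} \wedge F_1 \wedge \nega{F_2}$ forces $F_1 \wedge \nega{E_2} \wedge F_2$ to be consistent in the ``good'' direction, so the supremum of $\LGD | (F_1\vee F_2)$ reduces to $\lpr'(E_2|F_2) - \lpr'(E_1|F_1)$ (up to non-negative atomic contributions). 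Its non-negativity is exactly the claim. Applying this lemma to $(C|D)_* \lgn C|D$ and to $C|D \lgn (C|D)^*$, and using that $\lpr'$ restricts to $\lpr$ on $\asetc\ni (C|D)_*, (C|D)^*$, gives the two bounds.

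\emph{Sufficiency.} Assume $\lpr'(C|D)\in [\lpr((C|D)_*), \lpr((C|D)^*)]$. To verify 2-coherence (resp.\ centered 2-convexity) on $\eset$, the only non-trivial case is a pair $(A|B, C|D)$ with $A|B\in\asetc$, and I treat the two placements of $C|D$ in $\LGD$ separately. When $C|D$ occupies the positive slot $X_1|B_1$, I replace it with $(C|D)^*$; when it occupies the negative slot $X_0|B_0$, I replace it with $(C|D)_*$. The key inputs are: (i) the resulting elementary gain on $\asetc$ dominates the original gain pointwise on each atom of the partition generated by $A, B, C, D$, the discrepancy being controlled by $\lpr((C|D)^*) - \lpr'(C|D)\geq 0$ (resp.\ $\lpr'(C|D) - \lpr((C|D)_*)\geq 0$), together with the GN-relation annihilating the atoms lying in the symmetric difference between $C|D$ and its surrogate; (ii) 2-coherence of $\lpr$ on $\asetc$ then yields the required non-negativity of the supremum for the surrogate pair, which transfers back to the original pair by (i). In the centered 2-convex case it suffices additionally to note that $0|B$ is already in $\asetc$ with $\lpr(0|B)=0$, so centeredness is inherited automatically.

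The main obstacle I anticipate is the bookkeeping in the sufficiency step. Unlike the full coherence setting, where arbitrary positive linear combinations are available, here one has at most a single ``buy'' and a single ``sell'', so the substitution of $C|D$ by $(C|D)^*$ or $(C|D)_*$ must preserve the inequality $\sup\{\LGD | B\vee D\}\geq 0$ \emph{pointwise} on every atom where $\LGD$ could potentially be negative. Carrying out this atomwise comparison requires using precisely the explicit formulae \eqref{eq:max_cond_event} for the inner and outer conditional events, together with the standing hypothesis $\lpr'(C|D)\in [\lpr((C|D)_*), \lpr((C|D)^*)]$; the choice of inner versus outer surrogate according to the sign (positive or negative slot) is dictated by which of the two bounds is usable without spoiling the inequality.
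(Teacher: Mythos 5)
The paper's own proof is only a pointer to \cite[Proposition 4]{pel14} together with the remark that its hypotheses (and those of the monotonicity Lemma 1 it invokes) can be weakened to $2$-coherence or centered $2$-convexity, so your overall plan --- a GN-monotonicity lemma for necessity, plus a substitution argument for sufficiency --- is the right skeleton. However, your necessity step contains a concrete error: the two slots of the gain are swapped. With $X_1|B_1:=E_2|F_2$ (the $\lgn$-larger event, bought) and $X_0|B_0:=E_1|F_1$ (sold), the gain $\LGD=F_2(E_2-\lpr'(E_2|F_2))-F_1(E_1-\lpr'(E_1|F_1))$ equals $1-\lpr'(E_2|F_2)+\lpr'(E_1|F_1)\geq 0$ on the atom $\nega{E_1}\wedge F_1\wedge E_2\wedge F_2$ and $1-\lpr'(E_2|F_2)\geq 0$ on $\nega{F_1}\wedge E_2\wedge F_2$, neither of which is annihilated by \eqref{eq:GN_relation}; hence $\sup\{\LGD|F_1\vee F_2\}\geq 0$ holds trivially and yields no relation between the two prices. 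Worse, the one atom you do keep, $E_1\wedge F_1\wedge E_2\wedge F_2$, contributes $\lpr'(E_1|F_1)-\lpr'(E_2|F_2)$, whose non-negativity would be the \emph{reverse} of the desired inequality. The correct choice is $X_1|B_1:=E_1|F_1$ (buy the $\lgn$-smaller) and $X_0|B_0:=E_2|F_2$ (sell the $\lgn$-larger): then, using internality $0\leq\lpr'\leq 1$ (a necessary condition of $2$-convexity), every non-empty atom of $\LGD|F_1\vee F_2$ is bounded above by $\lpr'(E_2|F_2)-\lpr'(E_1|F_1)$, so $\sup\geq 0$ forces exactly the claim, and $s_0=s_1=1$ makes the same argument serve both the $2$-coherent and the $2$-convex case.

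The sufficiency step also has its transfer logic reversed. You assert that the surrogate gain (built from $(C|D)^*$ or $(C|D)_*$) \emph{dominates} the original gain pointwise and that non-negativity of its supremum ``transfers back''; but if surrogate $\geq$ original then $\sup(\text{surrogate})\geq 0$ gives no lower bound on $\sup(\text{original})$ --- you would need the original to dominate the surrogate. Moreover, neither simple substitution achieves pointwise domination in the needed direction: e.g.\ replacing a purchase of $C|D$ at $t\leq\lpr((C|D)^*)$ by a purchase of $(C|D)^*$ at $\lpr((C|D)^*)$ fails on atoms of $C^*\wedge D^*$ not implying $C\wedge D$, where the surrogate pays $1-\lpr((C|D)^*)$ while the original pays $-t$ or $0$, and the inner substitution fails on $(C\wedge D)_*$ and $\nega{C}\wedge D$ because of the price shift $t\geq\lpr((C|D)_*)$. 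So the sufficiency half needs a genuinely finer atom-by-atom bound on the supremum (as in \cite{pel14}) rather than a pointwise comparison of gains; as written, this half of the proposal does not go through.
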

\begin{proof}
Same as the proof of Proposition 4 in \cite{pel14},
noting that the assumption there,
that $\lpr$ is either coherent or centered convex,
can be relaxed to the current hypotheses on $\lpr$,
and that the same applies to Lemma 1 in \cite{pel14},
recalled in that proof.
\end{proof}
In Proposition \ref{pro:ext_prob},
the lower GN-extension $\lpr((C|D)_*)$ is the $2$-coherent or $2$-convex \emph{natural extension} of $\lpr$,
being its least-committal extension on $C|D$.
This means that any other $2$-coherent or, respectively, $2$-convex
prevision $\LQ\geq\lpr$ on $\asetc$ is such that $\LQ (C|D)\geq\lpr((C|D)_*)$,
cf. also Proposition \ref{pro:properties_2_ne}~c).
The upper GN-extension $\lpr((C|D)^*)$ has the opposite interpretation
of ($2$-coherent or $2$-convex) \emph{upper extension},
corresponding to the notion studied in \cite[Section 2.8]{wei01} for coherent lower probabilities.
For a general $\eset$,
the lower GN-extension still coincides with the $2$-coherent or $2$-convex natural extension:
\begin{proposition}
	\label{pro:ext_prob_many}
	Let $\lpr$ be a coherent, or alternatively centered convex, $2$-coherent or
	centered $2$-convex lower prevision defined on $\asetc$,
	and $\eset\supseteq\asetc$ an arbitrary set of conditional events.
	Then, the lower GN-extension of $\lpr$ on $\eset$
	($\lpr_{L} (C|D)=\lpr((C|D)_*)$, $\forall C|D\in\eset$),
	is the natural extension or, respectively, the convex,
	$2$-coherent or $2$-convex natural extension of $\lpr$ on  $\eset$.
\end{proposition}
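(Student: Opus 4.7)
The plan is to prove, for each $C|D\in\eset$, the two inequalities $\underline{E}(C|D)\geq\lpr_L(C|D)$ and $\underline{E}(C|D)\leq\lpr_L(C|D)$, where $\underline{E}$ stands for whichever of $\LE,\LEC,\LED,\LEDC$ matches the consistency hypothesis on $\lpr$; the four cases are treated in parallel.

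For the bound $\underline{E}(C|D)\geq\lpr_L(C|D)$, I fix $C|D$ and consider the restriction of $\underline{E}$ to $\asetc\cup\{C|D\}$. By Proposition~\ref{pro:properties_2_ne}(d) this restriction coincides with $\lpr$ on $\asetc$, and by Proposition~\ref{pro:properties_2_ne}(b) it inherits the relevant consistency property from $\underline{E}$, since each of coherence, centered convexity, $2$-coherence and centered $2$-convexity is preserved under restriction to a sub-domain (the gain inequalities in Definition~\ref{def:consistency} only involve elements of the current domain). Proposition~\ref{pro:ext_prob}, together with its coherent and centered-convex counterpart (Proposition~4 of \cite{pel14}), then forces $\underline{E}(C|D)\geq\lpr((C|D)_*)=\lpr_L(C|D)$.

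For the reverse inequality I would show directly that $\lpr_L$ is itself consistent on $\eset$ in the matching sense; Proposition~\ref{pro:properties_2_ne}(c), applied with $\lpr^{*}=\lpr_L$, then yields $\underline{E}\leq\lpr_L$ on $\eset$. The verification of consistency is the technical core: given a finite test configuration $\{X_i|B_i\}_{i=0}^m\subset\eset$, I would substitute each $X_i|B_i$ by its inner event $(X_i|B_i)_*\in\asetc$ and invoke the known consistency of $\lpr$ there. The crucial feature is that this substitution leaves the number of terms $m$ and the coefficients $s_i$ unchanged, so that the structural restrictions distinguishing the four consistency notions -- $m=1$ for $2$-coherence and $2$-convexity, $\sum_{i=1}^{m}s_i=s_0=1$ for convexity and $2$-convexity, and the centering requirement in the two convex cases -- transfer verbatim; moreover the substitution dominates the original gain atom by atom of $\prt$, which is the content of Lemma~1 of \cite{pel14}. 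Since that proof is insensitive both to $m$ and to the form of the coefficients, the arguments of \cite{pel14} for the coherent and centered-convex cases carry over to the $2$-coherent and centered $2$-convex ones with no new ideas. The main obstacle is precisely this bookkeeping step; once settled, combining the two inequalities yields $\lpr_L=\underline{E}$ on $\eset$.
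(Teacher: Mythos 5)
Your overall strategy (lower bound from the one-event result, upper bound from consistency of $\lpr_L$ plus minimality of the natural extension) is the natural one, and the paper itself gives no more detail than a citation to Propositions 5 and 6 of \cite{pel14}, so in spirit you are on the same track. The first half of your argument is fine. The problem is in what you yourself identify as the technical core, and the gap is concrete: termwise replacement of every $X_i|B_i$ by its inner conditional event does \emph{not} produce a gain comparable to the original one. The monotonicity of single-bet gains along $\lgn$ (Lemma 1 of \cite{pel14}) says that, at a fixed price in $[0,1]$, the gain from betting on $(C|D)_*$ is pointwise below the gain from betting on $C|D$. For the $m$ positively weighted terms this gives ``original $\geq$ substituted'', which is the inequality you need; but for the term carrying the coefficient $-s_0$ the same lemma gives the \emph{reverse} inequality after the sign change, so the substituted gain is neither above nor below the original. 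To fix that term one would have to substitute the \emph{outer} conditional event, at which point the price $\lpr((C_0|D_0)_*)$ no longer matches $\lpr((C_0|D_0)^*)$ and the substituted expression is not an admissible gain for $\lpr$ on $\asetc$; the conditioning event $\Ss$ also changes under substitution and must be tracked. So the ``bookkeeping step'' is not the verbatim transfer you describe.

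Fortunately the global consistency of $\lpr_L$ on $\eset$ is not actually needed for the upper bound. In Definition \ref{def_nat_ext} the set $L(C|D)$ (and likewise $\LD$, $L_c$, $L_{2c}$) is built only from gambles of the original domain $\dset=\asetc$ together with the single term $-D(C-\alpha)$; hence the value of each natural extension at $C|D\in\eset$ is entirely determined by the extension problem on $\asetc\cup\{C|D\}$, independently of the other elements of $\eset$. Proposition \ref{pro:ext_prob} (and its coherent/convex counterpart) exhibits a consistent extension to $\asetc\cup\{C|D\}$ taking the value $\lpr((C|D)_*)$, and any $\alpha$ in the relevant $L$-set must lie below the value of every consistent dominating extension on that two-piece domain, giving $\underline{E}(C|D)\leq\lpr((C|D)_*)$ directly. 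Combined with your (correct) lower bound this closes the proof, and the consistency of $\lpr_L$ on all of $\eset$ then follows as a corollary from Proposition \ref{pro:properties_2_ne} b) rather than being an input. As written, your proposal leaves the decisive step resting on a domination claim that is false for the negatively weighted term.
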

\begin{proof}
For a coherent or centered convex $\lpr$,
this is the statement of Propositions 5 and 6 in \cite{pel14}.
With obvious modifications, their proofs can be adapted to prove Proposition \ref{pro:ext_prob_many} in the case that $\lpr$ is $2$-coherent or centered $2$-convex.
\end{proof}
Proposition \ref{pro:ext_prob_many} ensures that the lower GN-extension determines
the $2$-coherent or $2$-convex natural extensions too.
This is an extremely useful result,
because computing the lower GN-extension is rather simple,
as it only requires to detect $(C|D)_*$.
This is done by means of the first of \eqref{eq:max_cond_event} simply exploiting the logical relationships between the atoms of $\prt$ and both $C$ and $D$.
In the hypotheses of this section,
just the logical relationships allowing us to identify the inner event
$(C|D)_*$ matter to find the extensions we are concerned with,
irrespective of the kind of consistency of $\lpr$ in $\asetc$.
If in particular $\lpr$ is coherent on $\asetc$,
then the four extensions coincide:
\begin{proposition}[Sandwich Theorem]
\label{pro:sandwich_thm}
Let $\eset$ be an arbitrary set of conditional events.
If $\lpr$ is coherent on $\asetc$, then
$\LE(C|D)=\LEC(C|D)=\LED(C|D)=\LEDC(C|D)(=\lpr((C|D)_*))$, $\forall C|D\in\eset$.
\end{proposition}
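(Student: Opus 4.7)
The plan is to reduce the statement directly to Proposition \ref{pro:ext_prob_many}, once we have checked that all four consistency hypotheses required there are satisfied whenever $\lpr$ is coherent on $\asetc$.

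First, by the implication chain \eqref{eq:consistency_implications}, coherence on $\asetc$ already gives $2$-coherence and $2$-convexity, and also convexity on $\asetc$. What remains to verify is the \emph{centering} assumption needed for the convex and $2$-convex versions of Proposition \ref{pro:ext_prob_many}, namely that for every $B\in\asetpav$ the conditional event $\varnothing|B$ belongs to $\asetc$ and $\lpr(\varnothing|B)=0$. The first is immediate from Definition \ref{def:special_structures}, since $\varnothing\in\asetpa$. For the second, coherence of $\lpr$ on $\asetc$ implies the necessary condition \eqref{eq:intern}, whence $0=\inf\{\varnothing|B\}\leq\lpr(\varnothing|B)\leq\sup\{\varnothing|B\}=0$. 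Thus $\lpr$ is simultaneously coherent, centered convex, $2$-coherent, and centered $2$-convex on $\asetc$.

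Next I would invoke Proposition \ref{pro:ext_prob_many} four times, once for each of the four consistency levels. In each case the conclusion is that the corresponding natural extension of $\lpr$ on $\eset$ coincides with the lower GN-extension $C|D\mapsto\lpr((C|D)_*)$. Hence, for every $C|D\in\eset$,
\begin{equation*}
\LE(C|D)=\LEC(C|D)=\LED(C|D)=\LEDC(C|D)=\lpr((C|D)_*),
\end{equation*}
which is exactly the claim.

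There is no real obstacle here: the work has already been done in Proposition \ref{pro:ext_prob_many}, and the only point that needs a brief verification is the centering of $\lpr$, which follows at once from \eqref{eq:intern}. One may add, as a sanity check, that the inequalities of Lemma \ref{lem:order_ne} are consistent with this collapse, since the common value $\lpr((C|D)_*)$ is sandwiched (whence the name of the theorem) between the largest extension $\LE$ and the smallest $\LEDC$, forcing equality throughout.
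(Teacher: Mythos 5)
Your proposal is correct and follows essentially the same route as the paper: both reduce the statement to Proposition \ref{pro:ext_prob_many} applied at each of the four consistency levels, which identifies every natural extension with the lower GN-extension $\lpr((C|D)_*)$ and hence forces them to coincide. The only difference is that you explicitly verify the centering hypothesis ($\varnothing|B\in\asetc$ and $\lpr(\varnothing|B)=0$ via \eqref{eq:intern}), a detail the paper leaves implicit while instead noting finiteness via Proposition \ref{pro:finiteness_ne} and the ordering of Lemma \ref{lem:order_ne}.
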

\begin{proof}
Nearly obvious:
coherence of $\lpr$ ensures that all extensions are finite
by Proposition~\ref{pro:finiteness_ne};
in general they are ranked by \eqref{eq:order_ne},
but since by Proposition~\ref{pro:ext_prob_many} the same procedure is applied to find them all,
they actually coincide,
and are identical to the lower GN-extension.
\end{proof}
Proposition \ref{pro:sandwich_thm} suggests an interesting perspective for comparing $\LE$, $\LEC$, $\LED$, $\LEDC$:
investigate under what conditions they may coincide.
We shall see that they still do so
when $\lpr$ is assessed in the larger environment considered in Section \ref{sec:extension_cond_low_prev},
while they generally do not in the different,
although rather general too, setting in Section \ref{sec:extension_lprob_to_lprev}.

\section{Extending conditional lower previsions}
\label{sec:extension_cond_low_prev}
In this section we suppose that $\dset=\lcond$.
This is an important special case,
because of the closure properties of $\lcond$,
and since $\lcond$ generalises a linear space
(cf. Definition \ref{def:def_dlin}).
On our way to establish when more natural extensions may coincide,
we preliminarily tackle two issues:
\begin{itemize}
	\item[i)]
	derive an alternative expression for the (coherent) natural extension of a coherent lower prevision;
	\item[ii)]
	discuss how to hedge possibily non-finite extensions.
\end{itemize}
As for i),
the following proposition was proven in \cite[Proposition 2]{pel17}.
\begin{proposition}
	\label{pro:alt_nat_ext}
	Let $\lpr$ be coherent on $\lcond$ and $Z|A$ a conditional gamble.
	Then, defining 
	\begin{eqnarray}
	\label{eq:alt_LE}
	\begin{array}{ll}
	L_1(Z|A)&=\{\alpha:\sup\{BX-A(Z-\alpha)|A\vee B\}<0,\\
	&\mbox{for some } X\in\xset, B\in\bset, \mbox{ with } \lpr(X|B)=0 \mbox{ \emph{if} } B\neq\varnothing\},
	\end{array}
	\end{eqnarray}
	$L_1(Z|A)=L(Z|A)$ and
	the natural extension of $\lpr$ on $Z|A$ is
	\begin{eqnarray}
	\label{eq:alt_nat_ext}
	\LE(Z|A)=\sup L_1(Z|A).
	\end{eqnarray}
\end{proposition}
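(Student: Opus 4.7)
The plan is to prove the set equality $L_1(Z|A) = L(Z|A)$; from this, \eqref{eq:alt_nat_ext} follows immediately since $\LE(Z|A)=\sup L(Z|A)$ by Definition \ref{def_nat_ext}~a). The inclusion $L_1(Z|A)\subseteq L(Z|A)$ is nearly immediate: a witness $(X,B)$ for $\alpha\in L_1(Z|A)$ with $B\neq\varnothing$ and $\lpr(X|B)=0$ gives a single-term witness for $\alpha\in L(Z|A)$ by setting $m=1$, $s_1=1$, $X_1=X$, $B_1=B$, since then $s_1 B_1(X_1-\lpr(X_1|B_1))=BX$; if $B=\varnothing$ we simply take $m=0$. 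The real work is the reverse inclusion, where we must collapse a multi-term combination into a single term of the special form required by $L_1$.

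Given $\alpha\in L(Z|A)$ witnessed by $m\geq 0$, $X_i|B_i\in\lcond$, $s_i\geq 0$, set $\Ss=\bigvee_{i=1}^{m}\{B_i:s_i\neq 0\}$. If $\Ss=\varnothing$, the strict-negative supremum reduces to $\sup\{-A(Z-\alpha)|A\}<0$, and membership $\alpha\in L_1(Z|A)$ is witnessed trivially by $B=\varnothing$ (with $X$ arbitrary). Otherwise put $B=\Ss$ and
$$Y=\sum_{i=1}^{m} s_i B_i\bigl(X_i-\lpr(X_i|B_i)\bigr).$$
Because $\xset$ is a linear space stable by restriction, containing all the $X_i$ and the indicators $B_i$, we have $Y\in\xset$.

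The crux is to verify $\lpr(Y|B)\geq 0$. For each $i$ with $s_i\neq 0$ we have $B_i\leq B$, hence $B\wedge B_i=B_i$, so axiom (A4) from Theorem \ref{thm:characterisation} yields $\lpr(B_i(X_i-\lpr(X_i|B_i))|B)=0$. Combining with positive homogeneity (A2) and superadditivity (A3) of the coherent $\lpr$ gives $\lpr(Y|B)\geq 0$. Now define $X=Y-\lpr(Y|B)\in\xset$ (a legitimate element of $\xset$ since constants belong to it, $\Omega\in\bset\subset\xset$); by translation invariance \eqref{eq:trans}, $\lpr(X|B)=0$, so $(X,B)$ meets the requirements in \eqref{eq:alt_LE}. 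Since every summand in $Y$ vanishes off $B$, we have $BY=Y$ and hence $BX=Y-B\lpr(Y|B)$; on $A\vee B=A\vee\Ss$ this gives $BX\leq Y$ pointwise (equality off $B$, while on $B$ we subtract the non-negative quantity $\lpr(Y|B)$). Consequently
$$\sup\{BX-A(Z-\alpha)\mid A\vee B\}\leq\sup\{Y-A(Z-\alpha)\mid A\vee\Ss\}<0,$$
so $\alpha\in L_1(Z|A)$, completing the inclusion.

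The main obstacle is precisely this single-term reduction: the multi-term combination $Y$ need not itself have $\lpr(Y|B)=0$, so it is not directly in the form demanded by $L_1$. The trick is that full coherence (via (A2)--(A4)) already forces $\lpr(Y|B)\geq 0$, which is exactly what makes the re-centering $Y\mapsto Y-\lpr(Y|B)$ both admissible (stays in $\xset$, normalises the lower prevision to $0$) and sup-compatible (it only lowers the pointwise values on $B$, leaving them unchanged on $A\wedge\neg B$). Once this is in place, $L_1(Z|A)=L(Z|A)$ and \eqref{eq:alt_nat_ext} is simply $\sup L_1(Z|A)=\sup L(Z|A)=\LE(Z|A)$.
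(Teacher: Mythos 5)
Your proof is correct. The paper does not prove Proposition \ref{pro:alt_nat_ext} in-text (it only cites \cite[Proposition 2]{pel17}), but your argument --- collapsing the multi-term gain into the single gamble $Y=\sum_{i=1}^{m} s_i B_i(X_i-\lpr(X_i|B_i))$, using (A4), (A2), (A3) to obtain $\lpr(Y|\Ss)\geq 0$, and re-centering to $X=Y-\lpr(Y|\Ss)$ so that the sup can only decrease --- is exactly the technique the authors deploy in the proofs of Theorem \ref{thm:ext_conv} and Corollary \ref{co:Walley_unconditional}, so it is essentially the intended approach; the only point worth flagging is that, just as the paper does in Theorem \ref{thm:ext_conv}, you implicitly use that $\Ss=\bigvee_{s_i\neq 0}B_i$ belongs to $\bset$, a closure property not spelled out in Definition \ref{def:def_dlin} but assumed throughout the paper.
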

As a corollary to Proposition \ref{pro:alt_nat_ext},
we may derive a first equality for natural extensions:
\begin{corollary}
\label{cor:equality_e_e2}
If $\lpr$ is coherent on $\lcond$,
then $\LE(Z|A)=\LED(Z|A)$, $\forall Z|A$.
\end{corollary}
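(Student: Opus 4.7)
The plan is to show both inequalities. One direction, $\LE(Z|A) \geq \LED(Z|A)$, is already provided by Lemma \ref{lem:order_ne}, so the task reduces to proving $\LE(Z|A) \leq \LED(Z|A)$. The natural tool here is Proposition \ref{pro:alt_nat_ext}, which represents $\LE(Z|A)$ as $\sup L_1(Z|A)$, where the defining expression for $L_1(Z|A)$ already uses only a single pair $(X,B)$, rather than a finite sum of terms. Since $L_2(Z|A)$ is likewise built from a single pair $X_1|B_1$, the strategy is to establish the set inclusion $L_1(Z|A) \subseteq L_2(Z|A)$, from which the desired inequality between suprema follows.

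For the inclusion, I would fix $\alpha \in L_1(Z|A)$ with witness $X \in \xset$, $B \in \bset$ satisfying $\sup\{BX - A(Z-\alpha) \mid A \vee B\} < 0$ and, when $B \neq \varnothing$, $\lpr(X|B) = 0$. In the case $B \neq \varnothing$, we have $X|B \in \lcond = \dset$, and since $\lpr(X|B) = 0$, the gain can be rewritten as $B(X - \lpr(X|B)) - A(Z-\alpha)$. Choosing $s_1 = 1$, $X_1 = X$, $B_1 = B$ in the definition of $L_2(Z|A)$ yields exactly this gain with $A \vee \Ss = A \vee B$, so $\alpha \in L_2(Z|A)$. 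In the case $B = \varnothing$, the $L_1$ condition reduces to $\sup\{-A(Z-\alpha) \mid A\} < 0$, i.e.\ $\alpha < \inf\{Z|A\}$; this is witnessed in $L_2(Z|A)$ by picking $s_1 = 0$ together with any element of $\lcond$ (e.g.\ $0|\Omega$, which exists since $\Omega \in \bset$ and $0 \in \xset$), because then $\Ss = \varnothing$, $A \vee \Ss = A$, and the gain restricted to $A$ is just $-A(Z-\alpha) < 0$. Thus $L_1(Z|A) \subseteq L_2(Z|A)$, and taking suprema gives $\LE(Z|A) \leq \LED(Z|A)$, closing the proof.

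The only subtlety is the case $B = \varnothing$ in the definition of $L_1$: one has to check that the $s_1 = 0$ branch of $L_2$ is genuinely available and reproduces the correct conditioning event. Beyond this minor verification, the argument is essentially a direct comparison of the two sets defining the respective suprema, with Proposition \ref{pro:alt_nat_ext} doing the substantive work by reducing $L(Z|A)$ (which in principle involves arbitrarily many summands) to the single-term form $L_1(Z|A)$ that matches $L_2(Z|A)$.
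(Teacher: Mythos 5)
Your argument is correct and is essentially the paper's own proof: both establish the inclusion $L_1(Z|A)\subseteq L_2(Z|A)$ with the same case split on $B=\varnothing$ versus $B\neq\varnothing$, and both invoke Proposition \ref{pro:alt_nat_ext} to identify $\sup L_1(Z|A)$ with $\LE(Z|A)$. The only cosmetic difference is that the paper phrases the conclusion via the chain $L_1\subseteq L_2\subseteq L$ together with $L_1=L$, whereas you split off the inequality $\LE\geq\LED$ via Lemma \ref{lem:order_ne}; the substance is identical.
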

\begin{proof}
It holds that $L_1(Z|A)\subseteq L_2(Z|A)$:
let for this $\alpha\in L_1(Z|A)$.
Then $\sup\{BX-A(Z-\alpha)|A\vee B\}<0$,
for some $X\in\xset$, $B\in\bset$.
If $B=\varnothing$, $\alpha\in L_2(Z|A)$ (case $\Ss=\varnothing$).
If $B\neq\varnothing$,
then $\lpr(X|B)=0$ and writing the supremum as
$\sup\{B(X-\lpr(X|B))-A(Z-\alpha)|A\vee B\}<0$
it appears that again $\alpha\in \LD(Z|A)$.

Also, $\LD(Z|A)\subseteq L(Z|A)$: this is due to Definition \ref{def_nat_ext} a), b).

Hence, $L_1(Z|A)\subseteq L_2(Z|A)\subseteq L(Z|A)$, in general.
By Proposition \ref{pro:alt_nat_ext},
if $\lpr$ is coherent on $\lcond$, then $L_1(Z|A)=L_2(Z|A)=L(Z|A)$.
Taking the suprema in $L(Z|A)$ and $L_2(Z|A)$, it ensues that  $\LE(Z|A)=\LED(Z|A), \forall Z|A$.
\end{proof}
While \eqref{eq:alt_nat_ext} supplies a new alternative expression for $\LE(Z|A)$,
observe that it boils down to a known result in the unconditional case,
formally obtained putting $\bset=\{\Omega,\varnothing\}$,
$A=\Omega$ in Proposition \ref{pro:alt_nat_ext}.
\begin{corollary}
\label{co:Walley_unconditional}
If $\lpr$ is coherent on a linear space $\xset$ containing real constants and $Z$ is an arbitrary gamble,
then
$\LE(Z)=\sup(L(Z|\Omega))$ reduces to $\sup\{\lpr(Y):Y\leq Z, Y\in\xset\}$.
\end{corollary}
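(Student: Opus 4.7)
The plan is to specialise Proposition \ref{pro:alt_nat_ext} to the unconditional case and then recognise the resulting expression as $\sup\{\lpr(Y):Y\leq Z,\ Y\in\xset\}$ via a standard translation argument.

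First I would substitute $\bset=\{\Omega,\varnothing\}$ and $A=\Omega$ into \eqref{eq:alt_LE}. In this setting the only two choices of $B$ are $\Omega$ and $\varnothing$. The case $B=\varnothing$ gives the condition $\sup\{-(Z-\alpha)\}<0$, i.e.\ $\alpha<\inf Z$, but this is already subsumed by the case $B=\Omega$, $X=0$, since $0\in\xset$ (linear space) and $\lpr(0)=0$ by coherence. So $L_1(Z|\Omega)$ reduces to the set of $\alpha\in\rset$ such that $\sup(X-Z+\alpha)<0$ for some $X\in\xset$ with $\lpr(X)=0$, i.e.\ $\alpha<\inf(Z-X)$ for some such $X$. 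Taking suprema, Proposition \ref{pro:alt_nat_ext} yields
\begin{equation*}
\LE(Z)=\sup\bigl\{\inf(Z-X):X\in\xset,\ \lpr(X)=0\bigr\}.
\end{equation*}

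Next I would establish equality of this quantity with $\sup\{\lpr(Y):Y\leq Z,\ Y\in\xset\}$ by a two-way correspondence. For the direction ``$\leq$'', given $X\in\xset$ with $\lpr(X)=0$, set $c=\inf(Z-X)$ and $Y=X+c$. Then $Y\in\xset$ (since $\xset$ is a linear space containing constants), $Y\leq Z$ by construction, and by translation invariance \eqref{eq:trans} we have $\lpr(Y)=\lpr(X)+c=c=\inf(Z-X)$. For the direction ``$\geq$'', given $Y\in\xset$ with $Y\leq Z$, set $X=Y-\lpr(Y)\in\xset$. Then $\lpr(X)=0$ by \eqref{eq:trans}, and $\inf(Z-X)=\inf(Z-Y)+\lpr(Y)\geq\lpr(Y)$ since $Z-Y\geq 0$. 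Taking suprema in both directions gives the claimed equality.

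The computation is almost entirely bookkeeping; the only mildly delicate point will be handling the strict inequality $\alpha<\inf(Z-X)$ when passing from the defining set of $L_1(Z|\Omega)$ to its supremum, but since $\sup\{\alpha:\alpha<c\}=c$ for any real $c$ this is routine. No further obstacle is anticipated.
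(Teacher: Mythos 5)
Your proposal is correct and follows essentially the same route as the paper: specialise Proposition \ref{pro:alt_nat_ext} with $\bset=\{\Omega,\varnothing\}$, $A=\Omega$, and then use translation invariance \eqref{eq:trans} to pass between gambles $X$ with $\lpr(X)=0$ and general $Y\leq Z$ via $Y=X+\mathrm{const}$. The only difference is bookkeeping: the paper substitutes $Y=X+\alpha$ inside the set and handles the strict-inequality boundary with an $\varepsilon$-argument, whereas you first compute $\sup L_1(Z|\Omega)=\sup\{\inf(Z-X):\lpr(X)=0\}$ and then match suprema by an explicit two-way correspondence — both are sound.
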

\begin{proof}
We prove that $\LE(Z)=\sup(L(Z|\Omega))$ can be directly obtained from
\eqref{eq:alt_LE} and \eqref{eq:alt_nat_ext}.
Apply for this Proposition \ref{pro:alt_nat_ext} with $\bset=\{\Omega,\varnothing\}$, $A=\Omega$.
Then,
\begin{align*}
	L(Z|\Omega)&=L_1(Z|\Omega)\\
	&=\{\alpha:\sup(X+\alpha-Z)<0, \mbox{ for some } X\in\xset,
	\mbox{ with }\lpr(X)=0\}\\
	&=\{\lpr(Y):\sup(Y-Z)<0, Y\in\xset\}
\end{align*}
(at the last equality put $Y=X+\alpha$ and recall \eqref{eq:trans} to get $\lpr(Y)=\alpha$).
Noting that $\forall Y\in\xset$ such that $Y\leq Z$, $\forall \epsilon>0$ it is $Y-\epsilon\in\xset$, $\sup((Y-\epsilon)-Z)<0$ and that
$\sup_{\epsilon>0}\lpr(Y-\epsilon)=\sup_{\epsilon>0}(\lpr(Y)-\epsilon)=\lpr(Y)$,
the thesis follows easily.
\end{proof}
In fact,
Corollary \ref{co:Walley_unconditional} is part of the statement of Corollary 3.1.8 in \cite{wal91}
and points out a straightforward procedure for computing the natural extension when
$\lpr$ is defined on the structured (unconditional) set $\xset$.
Interestingly, equation \eqref{eq:alt_nat_ext} is a generalisation of such a procedure to a conditional environment.

Turning to issue ii) from the beginning of this section,
we are interested in guaranteeing that the various natural extensions considered are finite,
i.e. neither $-\infty$ nor $+\infty$.
Regarding $\LE$ (or $\LED$),
we shall apply Proposition~\ref{pro:finiteness_ne} a).
In the case of $\LEC$ or $\LEDC$,
the condition in Proposition \ref{pro:finiteness_ne} b),
that $\lpr(0|A)=0$ for any
additional $Z|A$, is rather natural.
Yet, a $2$-convex or convex $\lpr$ does not necessarily fulfil it.
In fact,
it may be the case that $0|A\in\lcond$ and $\lpr(0|A)\neq 0$,
which we can avoid by restricting our attention to $\emph{centered}$ $2$-convex or convex previsions.
But even doing so, \emph{as we will},
it may happen that $0|A\notin\lcond$ and,
unlike the case of a coherent or $2$-coherent $\lpr$,
$\lpr(0|A)=0$ is not the unique ($2$--)convex extension of $\lpr$.
If $\dset^*\supset \lcond$ is the generic set we wish to extend $\lpr$ to,
let
\begin{eqnarray}
\label{eq:set_ext}
\nmset=\{0|A:Z|A\in\dset^*\},\
\lcondz=\lcond\cup\nmset
\end{eqnarray}
Then,
it holds that
\begin{proposition}
\label{pro:ext_cent}
Let $\lpr$ be centered $2$-convex (alternatively, centered convex) on $\lcond$.
The extension of $\lpr$ to $\lcondz$ such that
$\lpr(0|A)=0$, $\forall 0|A\in\nmset$ is centered $2$-convex (convex).
\end{proposition}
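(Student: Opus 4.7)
Because $\lcondz = \lcond \cup \nmset$ need not itself be a domain of the form $\lcond$ (the events $A$ with $0|A \in \nmset$ may lie outside $\bset$), the axiomatic characterisations in Theorem~\ref{thm:characterisation} cannot be invoked directly for $\lpr'$. My plan is therefore to verify the consistency clauses d) and c) of Definition~\ref{def:consistency} for $\lpr'$ from first principles, and to check centering separately. Centering is immediate: if $X|B \in \lcond$, then $0|B \in \lcond \subseteq \lcondz$ with $\lpr'(0|B) = \lpr(0|B) = 0$ because $\lpr$ is centered; if $0|A \in \nmset$, then $\lpr'(0|A) = 0$ by the very definition of the extension.

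For $2$-convexity, I would fix arbitrary $X_0|B_0, X_1|B_1 \in \lcondz$ and verify $\sup\{\LGDC \mid B_0 \vee B_1\} \geq 0$ by cases. If both elements belong to $\lcond$, the claim is just $2$-convexity of $\lpr$ on $\lcond$; if both belong to $\nmset$, then $\LGDC \equiv 0$. The mixed case is the heart of the argument. Say $X_0|B_0 = 0|A \in \nmset$ and $X_1|B_1 \in \lcond$ (the symmetric case is analogous). Since $X_0 = 0$ and $\lpr'(0|A) = 0$, the $\nmset$-term drops out, leaving $\LGDC = B_1(X_1 - \lpr(X_1|B_1))$. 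Because $\lpr$ is centered, $0|B_1 \in \lcond$ with $\lpr(0|B_1) = 0$, and applying $2$-convexity of $\lpr$ to the pair $(0|B_1, X_1|B_1) \in \lcond$ produces exactly this same gain conditioned on $B_1$; hence $\sup\{\LGDC \mid B_1\} \geq 0$. Since $B_1 \Rightarrow A \vee B_1$ and the sup over a larger set of atoms is at least the sup over a smaller one, $\sup\{\LGDC \mid A \vee B_1\} \geq 0$ follows.

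For convexity, I would take $X_0|B_0, \ldots, X_m|B_m \in \lcondz$ with $s_0 = 1$ and $\sum_{i=1}^m s_i = 1$, and split the indices $i\geq 1$ into $J = \{i : X_i|B_i \in \lcond\}$ and $K = \{i : X_i|B_i \in \nmset\}$. The $K$-terms contribute zero to $\LG$ because $X_i = 0$ and $\lpr'(X_i|B_i) = 0$. If $X_0|B_0 \in \lcond$, set $\sigma = \sum_{i \in J} s_i \leq 1$ and append a \emph{ghost} term $0|B_0$ with coefficient $1 - \sigma$ (contributing zero by centering of $\lpr$); this recasts $\LG$ as a genuine convex combination over $\lcond$, with $0$-th term $X_0|B_0$ and coefficients summing to one, so convexity of $\lpr$ on $\lcond$ applies. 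If instead $X_0|B_0 = 0|A \in \nmset$, the $0$-th term also vanishes; for $\sigma = 0$ we have $\LG \equiv 0$, and for $\sigma > 0$ I would normalise via $s_i' = s_i/\sigma$ and use $0|B_j \in \lcond$ (any $j \in J$ with $s_j > 0$) as the centered $0$-th element, again invoking convexity of $\lpr$ on $\lcond$.

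The main obstacle, more bookkeeping than conceptual, will be tracking conditioning events through these reductions. In each sub-case the $\lcond$-axiom yields non-negativity over a conditioning event $\Ss^\prime$ that is generally smaller than the original $\Ss = \bigvee\{B_i : s_i \neq 0\}$ relevant to $\lpr'$; I need to check that $\Ss^\prime \Rightarrow \Ss$ and that $\LG$ (or $\LGDC$) vanishes identically on $\Ss \setminus \Ss^\prime$, so that $\sup\{\LG \mid \Ss\} \geq \sup\{\LG \mid \Ss^\prime\} \geq 0$, which completes the proof.
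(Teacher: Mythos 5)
Your proposal is correct and follows essentially the same route as the paper: both arguments exploit that the $\nmset$-terms contribute identically zero to the gain, trade them (via centering) for zero gambles $0|B$ already in $\lcond$ so that the gain becomes an admissible one for the consistency of $\lpr$ on $\lcond$, and then pass from the smaller conditioning event to the original one by monotonicity of the conditional supremum. The only differences are cosmetic bookkeeping (your ghost term on $B_0$ and the rescaling by $\sigma$ versus the paper's replacement of all $\nmset$-terms by a single term $(\sum s_i)B(0-0)$ on the conditioning event of an $\lcond$-gamble present in the gain).
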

\begin{proof}
In the case that $\nmset=\{0|A\}$, i.e. $\nmset$ is a singleton,
the statement is proved in \cite[Proposition 7(b)]{pel05} for convexity and
\cite[Proposition 4(b)]{pel16} for $2$-convexity.

Let us now consider the general situation, and assume that $\lpr$ is centered convex on $\lcond$.
We shall check, using Definition \ref{def:consistency} c),
that $\lpr$ is centered convex on $\lcondz$.
Let $\LG$, $\Ss$ be as in Definition \ref{def:consistency} c).
Define
$S_1(\unds)=\bigvee\{B_i: X_i|B_i\in\lcond, s_i>0\}$,
$S_0(\unds)=\bigvee\{B_i :0|B_i\in\nmset\setminus\lcond, s_i>0\}$,
so that $\Ss=S_1(\unds)\vee S_0(\unds)$.
We have to evaluate those gains only where neither $S_1(\unds)=\varnothing$ (or else $\LG=0$, trivially)
nor $S_0(\unds)=\varnothing$ (or else $\LG$ regards only gambles already in $\lcond$,
where convexity holds by assumption).
To better distinguish the two different kinds of gambles in $\LG$,
we use the notation $A_i$, $0|A_i$ instead of $B_i$, $0|B_i$ for any $B_i$ such that $0|B_i\in\nmset\setminus\lcond$.
Thus, the generic gain $\LG$ to be inspected may be written as
\begin{eqnarray}
\label{eq:generic_gain}
\LG=\sum_{i=1}^{r} s_i B_i(X_i-\lpr(X_i|B_i))+\sum_{i=r+1}^{n} s_i A_i(0-0)-g_0,
\end{eqnarray}
with $g_0$ being either $A_0 (0-0)$ or $B_0 (X_0-\lpr(X_0|B_0))$ with $X_0|B_0\in\lcond$ and $0 \leq r \leq n$.
Since $S_1(\unds)\neq\varnothing$,
there is at least one gamble of $\lcond$ in $\LG$,
let it be $X|B$.
Hence $0|B\in\lcond$ too.
Now replace in \eqref{eq:generic_gain} $\sum_{i=r+1}^{n}s_i A_i (0-0)$
with $(\sum_{i=r+1}^{n}s_i)B(0-0)$ and $g_0$ with
\[
g_0^\prime=\begin{cases}
B(0-0) \mbox{ if } g_0=A_0(0-0)\\
g_0 \quad\quad\quad\mbox{ otherwise}.
\end{cases}
\]
We get 
\begin{eqnarray*}
\LG^\prime=\sum_{i=1}^{r} s_i B_i(X_i-\lpr(X_i|B_i))+(\sum_{i=r+1}^{n} s_i) B(0-0)-g_0^\prime.
\end{eqnarray*}
Since $\LG^\prime=\LG$,
we obtain
$\sup(\LG|S_1(\unds)\vee S_0(\unds))=\sup(\LG^\prime|S_1(\unds)\vee S_0(\unds))\geq\sup(\LG^\prime|S_1(\unds))\geq 0$,
where convexity of $\lpr$ on $\lcond$ is exploited at the last inequality.

If $\lpr$ is centered $2$-convex on $\lcond$,
an analogous (simpler) proof applies.
\end{proof}
Proposition \ref{pro:ext_cent} suggests that when extending a centered convex or $2$-convex $\lpr$ from $\lcond$ to $\dset^{*}\supset\lcond$
we could consider first extending it to the set
$\lcondz$ defined in \eqref{eq:set_ext},
putting $\lpr(0|A)=0$, $\forall 0|A \in\nmset$.
This justifies the following:
\begin{definition}
	\label{def:nat_ext_plus}
	Given $\lcond\subset\dset^*$,
	let $\lpr$ be defined on $\lcondz$ with $\lpr(0|A)=0, \forall 0|A\in\lcondz$.
	Then, $\LEC^+$, $\LEDC^+$ are the convex,
	respectively $2$-convex natural extension of $\lpr$ from $\lcondz$ to $\lcondz\cup\dset^{*}(=\dset^{*}\cup\nmset)$.
\end{definition}
While $\LEC^+$ need not always coincide with the convex natural extension 
of $\lpr$ from $\lcond$ to $\lcondz$,
adding such zeroes is instead irrelevant when considering the natural extension or the
$2$-coherent natural extension, in the sense of the following lemma.
\begin{lemma}
\label{lem:add_zeroes}
Assign a coherent $\lpr$ on $\lcond$ and
let $\dset^*\supset\lcond$.
Using the notation 
$L(Z|A)$ as in Definition \ref{def_nat_ext} a)
with $\dset$ replaced by $\lcond$,
we write instead $L^{+}(Z|A)$ when $\dset=\lcondz$.

Then $L(Z|A)=L^{+}(Z|A)$,
and consequently
\begin{align*}
	\LE(Z|A)=\sup L(Z|A)=\sup L^{+}(Z|A), \forall Z|A\in\dset^*.
\end{align*} 

A perfectly analogous statement holds when $\lpr$ is $2$-coherent on $\lcond$,
replacing $L(Z|A)$, $L^+(Z|A)$, $\LE(Z|A)$ with
$\LD(Z|A)$, $\LD^+(Z|A)$, $\LED(Z|A)$.
\end{lemma}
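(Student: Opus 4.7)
The plan is to establish the set equality $L(Z|A) = L^+(Z|A)$, after which taking suprema on both sides immediately yields $\LE(Z|A) = \sup L(Z|A) = \sup L^+(Z|A)$; the analogous argument will then deliver the $2$-coherent case. The inclusion $L(Z|A) \subseteq L^+(Z|A)$ is immediate, because every witnessing combination in $L(Z|A)$ uses gambles from $\lcond \subseteq \lcondz$ and therefore also witnesses membership in $L^+(Z|A)$. So the substantive content is the reverse inclusion.

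The key observation I would use is that $\lpr(0|B) = 0$ for every $0|B \in \lcondz$: by the extending convention in Definition \ref{def:nat_ext_plus} for $0|B \in \nmset$, and by coherence of $\lpr$ on $\lcond$ together with internality \eqref{eq:intern} for $0|B \in \lcond$. Consequently, any term $s_j B_j(X_j - \lpr(X_j|B_j))$ coming from $X_j|B_j \in \nmset \setminus \lcond$ satisfies $X_j = 0$ and $\lpr(0|B_j) = 0$, so it contributes identically zero to the gain. Now given $\alpha \in L^+(Z|A)$ with a witnessing combination over indices $i = 1, \ldots, m$, I would split the indices into $I_1 = \{i : X_i|B_i \in \lcond\}$ and $I_2 = \{i : X_i|B_i \in \nmset \setminus \lcond\}$ and discard the $I_2$ contributions. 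The total gain $\LG$ is unchanged, while the new $\Ss = \bigvee\{B_i : i \in I_1, s_i \neq 0\}$ is contained in the original $\Ss^+ = \bigvee\{B_i : 1 \leq i \leq m, s_i \neq 0\}$. Since $A \neq \varnothing$, the conditioning event $A \vee \Ss$ is non-empty and is contained in $A \vee \Ss^+$, so $\sup\{\LG | A \vee \Ss\} \leq \sup\{\LG | A \vee \Ss^+\} < 0$, establishing $\alpha \in L(Z|A)$.

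For the $2$-coherent version the argument is essentially identical, simplified by $m = 1$: if the single allowed term already lies in $\lcond$ nothing needs doing, whereas if it lies in $\nmset \setminus \lcond$ one replaces it with the trivial choice $s_1 = 0$ and any fixed $X_1|B_1 \in \lcond$ (available since $\Omega \in \bsetp$ gives $0|\Omega \in \lcond$), which gives $\Ss = \varnothing$ and reduces the sup condition to $\sup\{-A(Z-\alpha) | A\} < 0$; this inequality follows from the original one because $A \subseteq A \vee \Ss^+$. The only subtlety, and hence the nearest thing to an obstacle, is the bookkeeping of verifying that shrinking $A \vee \Ss^+$ to $A \vee \Ss$ preserves the strict inequality on the supremum. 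This poses no real difficulty once one notes that $A \ne \varnothing$ guarantees both sides are taken over non-empty events, and that supremum is monotone in the domain.
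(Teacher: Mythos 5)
Your proof is correct and follows essentially the same route as the paper's: the easy inclusion $L(Z|A)\subseteq L^{+}(Z|A)$, then for the converse observing that the terms built on $\nmset\setminus\lcond$ contribute $0$ to the gain (since $\lpr(0|B)=0$ there) and that shrinking the conditioning event from $A\vee\Ss^{+}$ to $A\vee\Ss$ can only decrease the supremum, so the strict negativity is preserved. The paper writes this with the $\nmset$-terms displayed explicitly as $t_j B'_j(0-0)$ rather than via your index split $I_1,I_2$, but the argument is the same.
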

\begin{proof}
We prove the first statement,
the proof of the $2$-coherence statement being essentially identical.

Obviously,
$L(Z|A)\subseteq L^+(Z|A)$.
For the opposite inclusion,
let $\alpha\in L^+(Z|A)$ be such that,
letting
\begin{align*}
	 W=\sum_{i=1}^{m}s_i B_i(X_i-\lpr(X_i|B_i))+\sum_{j=1}^{n}t_j B'_j(0-0)-A(Z-\alpha),
\end{align*}
with $X_i|B_i\in\lcond$, $s_i\geq 0\ (i=1,\ldots,m)$, $0|B^\prime_j\in\nmset$, $t_j\geq 0\ (j=1,\ldots,n)$,
it holds that
$\sup\{W|\bigvee_{s_i\neq 0}B_i\vee\bigvee_{t_{j}\neq 0} B'_j\vee A\}<0$.

Then,
since
\begin{align*}
	 &\sup\{\sum_{i=1}^{m}s_i B_i(X_i-\lpr(X_i|B_i))-A(Z-\alpha)|\bigvee_{s_i\neq 0}B_i\vee A\}=\\
	 &\sup\{W|\bigvee_{s_i\neq 0}B_i\vee A\}\leq
	 \sup\{W|\bigvee_{s_i\neq 0}B_i\vee\bigvee_{t_{j}\neq 0} B'_j\vee A\}<0,
\end{align*}
we conclude that $\alpha\in L(Z|A)$ too.
\end{proof}
Note that $\LEP$ could be defined, analogously to
$\LEC^+$, $\LEDC^+$ in Definition \ref{def:nat_ext_plus},
as the natural extension of $\lpr$, coherent on $\lcondz$.
However, Lemma \ref{lem:add_zeroes}
states that $\LEP(Z|A)=\LE(Z|A)$,
and we may therefore use only the symbol $\LE(Z|A)$.
Similarly with $\LED^+(Z|A)$.

We turn now our attention to establishing whether the different natural extensions considered so far coincide under some conditions, for a lower prevision $\lpr$ on $\lcond$.
For this,
we introduce sequentially the following results:
\begin{itemize}
	\item[$\bullet$] Theorem \ref{thm:ext_coer},
	stating the equalities $\LE(Z|A)=\LEDC^{+}(Z|A)=\LED(Z|A)$ assuming $\lpr$ coherent;
	\item[$\bullet$] Theorem \ref{thm:ext_conv},
	proving the equality $\LEC^{+}(Z|A)=\LEDC^{+}(Z|A)$ under the (weaker)
	assumption of centered convexity (hence also of coherence) for $\lpr$;
	\item[$\bullet$] Theorem \ref{thm:final_sandwich}, which,
	making use of Theorems \ref{thm:ext_coer} and \ref{thm:ext_conv},
	proves the equalities $\LE(Z|A)=\LED(Z|A)=\LEC^{+}(Z|A)=\LEDC^{+}(Z|A)$ when $\lpr$ is coherent.
\end{itemize}
\begin{theorem}
\label{thm:ext_coer}
Let $\lpr$ be coherent on $\lcond(\subset\dset^*)$.
Then,
$\LE(Z|A)=\LEDC^{+}(Z|A)=\LED(Z|A)$, $\forall Z|A\in\dset^*$.
\end{theorem}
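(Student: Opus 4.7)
The plan is to sandwich $\LEDC^{+}(Z|A)$ between $\LED(Z|A)$ and $\LE(Z|A)$, and then close the sandwich from the other side using the alternative expression of the coherent natural extension from Proposition \ref{pro:alt_nat_ext}. Directly from the definitions one has $L_{2c}^{+}(Z|A)\subseteq L_{2}^{+}(Z|A)\subseteq L^{+}(Z|A)$, and Lemma \ref{lem:add_zeroes} (applied both to coherence and to $2$-coherence, which $\lpr$ inherits from coherence) gives $L^{+}(Z|A)=L(Z|A)$ and $L_{2}^{+}(Z|A)=L_{2}(Z|A)$. Passing to suprema yields $\LEDC^{+}(Z|A)\leq\LED(Z|A)\leq\LE(Z|A)$. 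Finiteness of $\LE$ and $\LED$ is guaranteed by Proposition \ref{pro:finiteness_ne}(a); for $\LEDC^{+}$ I would observe that a coherent $\lpr$ on $\lcond$ is centered convex (axiom (A2) with $\lambda=0$ forces $\lpr(0|B)=0$ for every $B\in\bsetp$), hence by Proposition \ref{pro:ext_cent} its extension to $\lcondz$ is centered $2$-convex, and Proposition \ref{pro:finiteness_ne}(b) applies since $0|A\in\lcondz$ for every $Z|A\in\dset^{*}$.

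The substantive step is the reverse bound $\LE(Z|A)\leq\LEDC^{+}(Z|A)$, which I would derive from the set inclusion $L_{1}(Z|A)\subseteq L_{2c}^{+}(Z|A)$; Proposition \ref{pro:alt_nat_ext} then closes the sandwich through $\LE(Z|A)=\sup L_{1}(Z|A)\leq\sup L_{2c}^{+}(Z|A)=\LEDC^{+}(Z|A)$. Fix $\alpha\in L_{1}(Z|A)$, witnessed by some $X\in\xset$, $B\in\bset$ with $\sup\{BX-A(Z-\alpha)|A\vee B\}<0$ and $\lpr(X|B)=0$ whenever $B\neq\varnothing$. If $B\neq\varnothing$, then $X|B\in\lcond\subseteq\lcondz$ and $BX=B(X-\lpr(X|B))$, so the same pair $(X,B)$ is already a legitimate $L_{2c}^{+}$-witness in the sense of Definition \ref{def_nat_ext}(d). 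If $B=\varnothing$, the $L_{1}$-inequality reduces to $\sup\{-A(Z-\alpha)|A\}<0$; as substitute witness I pick $0|A$, which lies in $\nmset\subseteq\lcondz$ because $Z|A\in\dset^{*}$ and satisfies $\lpr(0|A)=0$ by the convention underlying Definition \ref{def:nat_ext_plus}. Then $A(0-\lpr(0|A))-A(Z-\alpha)=-A(Z-\alpha)$ and the conditioning event is $A\vee A=A$, so the same strict inequality certifies $\alpha\in L_{2c}^{+}(Z|A)$.

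The only mildly delicate point is the $B=\varnothing$ branch of $L_{1}$: no gamble of $\lcond$ can play the role of a $2$-convex witness for it, which is precisely why the $2$-convex natural extension computed from $\lcond$ alone may be strictly smaller than $\LE$ in general. Enlarging the domain to $\lcondz$ and fixing $\lpr(0|A)=0$ is exactly the patch that makes a valid $L_{2c}^{+}$-witness available in that branch; this is the whole reason the ``$+$'' decoration appears in the statement. Combining the two inclusions gives $\LE(Z|A)\leq\LEDC^{+}(Z|A)\leq\LED(Z|A)\leq\LE(Z|A)$, and the triple equality follows.
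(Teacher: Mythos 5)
Your proof is correct. The second half of your argument (the chain $\LEDC^{+}(Z|A)\leq\LED(Z|A)\leq\LE(Z|A)$ obtained from the set inclusions $L_{2c}^{+}\subseteq L_{2}^{+}\subseteq L^{+}$ together with Lemma \ref{lem:add_zeroes}) is exactly what the paper does. The difference is in the key equality $\LE(Z|A)=\LEDC^{+}(Z|A)$: the paper simply cites \cite[Theorem 2]{pel17} for it, whereas you prove it from scratch by establishing $L_{1}(Z|A)\subseteq L_{2c}^{+}(Z|A)$ and invoking Proposition \ref{pro:alt_nat_ext}. Your witness construction is sound in both branches: when $B\neq\varnothing$ the normalisation $\lpr(X|B)=0$ built into $L_{1}$ makes $BX=B(X-\lpr(X|B))$ a legitimate single-term witness with coefficient $1$, and when $B=\varnothing$ the gamble $0|A\in\nmset$ with $\lpr(0|A)=0$ reproduces the same strict inequality over the conditioning event $A$. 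Your remark that the $B=\varnothing$ branch is precisely what forces the passage to $\lcondz$ correctly identifies why the ``$+$'' is needed. The finiteness discussion (coherence implies centered convexity via (A2) with $\lambda=0$, then Propositions \ref{pro:ext_cent} and \ref{pro:finiteness_ne}) is also accurate. In short, you have supplied a self-contained proof of the step the paper outsources, while the remainder coincides with the paper's argument.
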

\begin{proof}
The equality $\LE(Z|A)=\LE_{2c}^{+}(Z|A)$ was proven in \cite[Theorem 2]{pel17}.
As for the remaining part of the thesis,
from Lemmas \ref{lem:order_ne} and \ref{lem:add_zeroes} we get
$\LE(Z|A)\geq\LED(Z|A)=\LED^+(Z|A)\geq\LEDC^+(Z|A)$.
Then, since $\LE(Z|A)=\LEDC^+(Z|A)$, also  $\LED(Z|A)=\LE(Z|A)=\LEDC^+(Z|A)$.
\end{proof}
Another result of the same kind is
\begin{theorem}
\label{thm:ext_conv}
Let $\lpr$ be centered convex on $\lcondz$. 
Then,
$\LEC^{+}(Z|A)=\LEDC^{+}(Z|A)$, $\forall Z|A\in\dset^*$.
\end{theorem}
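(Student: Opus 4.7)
The order inequality $\LEC^{+}(Z|A) \geq \LEDC^{+}(Z|A)$ is immediate, since (mimicking Lemma \ref{lem:order_ne}) any single-term witness $X|B \in \lcondz$ for $\alpha \in L_{2c}^{+}(Z|A)$ serves at once as a convex-combination witness with $m=1$, $s_{1}=1$, whence $L_{2c}^{+}(Z|A) \subseteq L_{c}^{+}(Z|A)$. The content of the theorem therefore lies in the reverse inequality, which I plan to establish through the set inclusion $L_{c}^{+}(Z|A) \subseteq L_{2c}^{+}(Z|A)$.

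Fix $\alpha \in L_{c}^{+}(Z|A)$ with witnesses $X_{i}|B_{i} \in \lcondz$ and weights $s_{i} \geq 0$, $\sum_{i=1}^{m} s_{i} = 1$, realising the strict sup inequality defining $L_{c}^{+}$. The first step is to pool terms sharing a common conditioning event. For each distinct event $B^{\ast}$ appearing among the $B_{i}$, put $s_{B^{\ast}} = \sum_{i: B_{i} = B^{\ast}} s_{i}$ and, when $s_{B^{\ast}} > 0$, $Y_{B^{\ast}} = s_{B^{\ast}}^{-1} \sum_{i: B_{i} = B^{\ast}} s_{i} X_{i} \in \xset$. Iterated application of axiom (A6) (available by Theorem \ref{thm:characterisation}~c), since $\lpr$ is centred convex on $\lcondz$) yields $\lpr(Y_{B^{\ast}}|B^{\ast}) \geq s_{B^{\ast}}^{-1} \sum_{i: B_{i} = B^{\ast}} s_{i}\lpr(X_{i}|B_{i})$, and a short calculation gives
\[
s_{B^{\ast}} B^{\ast}\bigl(Y_{B^{\ast}} - \lpr(Y_{B^{\ast}}|B^{\ast})\bigr) \leq \sum_{i: B_{i} = B^{\ast}} s_{i} B^{\ast}\bigl(X_{i} - \lpr(X_{i}|B_{i})\bigr).
\]
Substituting each cluster by its pooled representative preserves the strict sup condition and the convex-combination constraint, so we may assume from here on that the $B_{i}$ are pairwise distinct.

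The main obstacle is the second step: consolidating the surviving convex-weighted terms with distinct conditioning events into a single witness $X|B \in \lcondz$. The coherent-case argument (Theorem \ref{thm:ext_coer}) is routed through Proposition \ref{pro:alt_nat_ext}, whose derivation in turn relies on positive homogeneity (A2) and superadditivity (A3); neither is available under mere centred convexity. I would replace these by a pairwise consolidation argument: at each stage, pick two surviving terms, invoke (A6) once more together with the conditioning axiom (A4), and use the centredness hypothesis on $\lcondz$ (which supplies $0|B \in \lcondz$ with $\lpr(0|B) = 0$ for every $B$ in play) to express the chosen pair as a single equivalent term of $\lcondz$ whose replacement preserves the strict sup inequality. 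Iterating drives the number of terms to one and produces the sought witness, giving $\alpha \in L_{2c}^{+}(Z|A)$. Taking the supremum then delivers $\LEC^{+}(Z|A) \leq \LEDC^{+}(Z|A)$ and, with the opposite inequality, completes the proof. The delicate bookkeeping required in the consolidation step, to ensure each intermediate gamble lies in $\lcondz$, is precisely what makes the centredness hypothesis indispensable.
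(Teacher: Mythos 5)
Your reduction to the inclusion $L_c^+(Z|A)\subseteq L_{2c}^+(Z|A)$ and your identification of (A6), (A4) and centredness as the engine are correct, and the same-$B$ pooling in your first step is sound (though it turns out to be unnecessary). The gap is that your second step --- the consolidation across distinct conditioning events, which is the entire content of the theorem --- is announced rather than carried out, and the way you describe it would not quite work as stated. You say you will express a chosen pair ``as a single equivalent term of $\lcondz$''; but no such equivalent term exists in general. What (A6) and (A4) actually give, for $Y=(s_1+s_2)^{-1}[s_1B_1(X_1-\lpr(X_1|B_1))+s_2B_2(X_2-\lpr(X_2|B_2))]$ and $B=B_1\vee B_2$, is only $\lpr(Y|B)\geq 0$: each summand has lower prevision $0$ given $B$ by (A4), and (A6) (after padding the weights to sum to one with a zero gamble, using centredness) preserves the convex combination. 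Hence the single term $(s_1+s_2)B(Y-\lpr(Y|B))$ is merely \emph{dominated} by the pair it replaces, not equal to it. It is exactly this one-sided inequality --- replacement by something pointwise smaller, which can only decrease the conditional supremum --- that lets the strict sup condition survive, and your proposal never states or verifies it. You also do not handle the degenerate case in which all the weight sits on the zero gambles of $\nmset$ (so $\Ss=\varnothing$); there the iteration cannot produce a witness in $\lcond$, and one must fall back on the witness $0|A\in\lcondz$ together with $\sup\{-A(Z-\alpha)|A\}\leq\sup\{W|\St\vee A\}<0$, which is precisely where the definition of $\lcondz$ earns its keep.

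For comparison, the paper performs the consolidation in one shot rather than pairwise: when $\Ss\neq\varnothing$ it takes $X=\sum_{i=1}^{m}s_iB_i(X_i-\lpr(X_i|B_i))\in\xset$ and $B=\Ss$, pads the convex combination with a zero gamble of weight $s=1-\sum_{i=1}^{m}s_i$ so that (A6) applies, concludes $\lpr(X|\Ss)\geq 0$ via (A4), and observes that the single-term gain $\Ss(X-\lpr(X|\Ss))-A(Z-\alpha)$ is pointwise at most $W$, whence its conditional supremum is still negative. Your pairwise scheme could be repaired along the same lines, each consolidation being a two-term instance of the same inequality, but as written the decisive domination inequality and the degenerate case are missing, so the proof is incomplete.
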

\begin{proof}
Term $L_c^+ (Z|A)$ the set $L_c (Z|A)$
in Definition~\ref{def_nat_ext} c) when $\dset=\lcondz$,
similarly to the definition of $L_{2c}^+ (Z|A)$ in the proof
of Theorem \ref{thm:ext_coer}.
We shall prove that $L_c^+=L_{2c}^+$;
consequently, their suprema $\LEC$, $\LEDC$ coincide too.

The inclusion $L_{2c}^+(Z|A)\subseteq L_{c}^+(Z|A)$ is obvious,
hence it only remains to prove that $L_{c}^+(Z|A)\subseteq L_{2c}^+(Z|A)$.
Let then $\alpha\in L_{c}^+(Z|A)$.
This means that there exists some
\begin{eqnarray}
\label{eq:w}
W=\sum_{i=1}^{m} s_i B_i(X_i-\lpr(X_i|B_i))+\sum_{j=1}^{n} t_j B'_j(0-\lpr(0|B'_j))-A(Z-\alpha)
\end{eqnarray}
with $X_i|B_i\in\lcond$, $s_i\geq 0$, $i=1,\ldots,m$, $0|B'_j\in\nmset$, $t_j\geq 0$, $j=1,\ldots n$, 
$\sum_{i=1}^{m} s_i+\sum_{j=1}^{n} t_j=1$,
such that,
defining $\Ss=\vee_{i=1}^{m}\{B_i:s_i\neq 0\}$, $\St=\vee_{j=1}^{n}\{B'_j:t_j\neq 0\}$,
\begin{eqnarray}
\label{eq:supwltz}
\sup\{W|\Ss\vee\St\vee A\}<0.
\end{eqnarray}
We distinguish two situations:
\begin{itemize}
\item[a)]
$0<\sum_{i=1}^{m} s_i=1-s\ (s\in [0,1[)$, hence $\Ss\neq\varnothing$.

Since $\lpr$ is convex on $\lcondz$, using $\sum_{i=1}^{m} s_i+s=1$,
by (A6) and then (A4),
Theorem \ref{thm:characterisation} c)
we get
\begin{align*}
&\lpr(\sum_{i=1}^{m} s_i B_i(X_i-\lpr(X_i|B_i))+s B'_1(0-\lpr(0|B'_1))|\Ss)\geq\\
&\sum_{i=1}^{m} s_i \lpr(B_i(X_i-\lpr(X_i|B_i))|\Ss)+s \lpr(B'_1(0-0)|\Ss)=0.
\end{align*}
From this inequality, it follows
\begin{align*}
W^\prime=&\Ss[\sum_{i=1}^{m} s_i B_i(X_i-\lpr(X_i|B_i))-
\lpr(\sum_{i=1}^{m} s_i B_i(X_i-\lpr(X_i|B_i))|\Ss)]\\
&-A(Z-\alpha)\leq\Ss[\sum_{i=1}^{m} s_i B_i(X_i-\lpr(X_i|B_i))]-A(Z-\alpha)=W.
\end{align*}
Hence,
there exists $X|B\in\lcond$ ($X=\sum_{i=1}^{m} s_i B_i(X_i-\lpr(X_i|B_i))$, $B=\Ss$)
such that
$W^\prime = B(X-\lpr(X|B))-A(Z-\alpha)\leq W$,
and that
\begin{align*}
\sup\{W^\prime|B\vee A\}&\leq\sup\{W^\prime|\Ss\vee\St\vee A\}\\
&\leq\sup\{W|\Ss\vee\St\vee A\}<0
\end{align*}
recalling \eqref{eq:supwltz} for the strict inequality.
Therefore $\alpha\in L_{2c}^{+}(Z|A)$.
\item[b)]
$\sum_{i=1}^{m}s_{i}=0$, hence $\Ss=\varnothing$.

From \eqref{eq:w}, the gamble
$W=\sum_{j=1}^{n}t_j B'_j(0-0)-A(Z-\alpha)=-A(Z-\alpha)$,
with $\sum_{j=1}^{n}t_j=1$, $t_j\geq 0$, $j=1,\ldots,n$
is such that $\sup\{W|\St\vee A\}<0$.
Since $0|A\in\lcondz$ and noting that $A(0-\lpr(0|A))-A(Z-\alpha)=W$,
it holds that $\sup\{W|A\}\leq\sup\{W|\St\vee A\}<0$.
Hence again $\alpha\in L_{2c}^{+}(Z|A)$.
\end{itemize}
\end{proof}
Finally,
we can now establish a sandwich theorem:
\begin{theorem}[\emph{Sandwich Theorem}]
\label{thm:final_sandwich}
Let $\lpr$ be coherent on $\lcond$.
Then $\LE(Z|A)$
$=\LED(Z|A)=\LEC^+(Z|A)=\LEDC^+(Z|A), \forall Z|A\in\dset^*$.
\end{theorem}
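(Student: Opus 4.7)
The plan is to stitch together Theorems \ref{thm:ext_coer} and \ref{thm:ext_conv}, bridging them via Proposition \ref{pro:ext_cent}. Theorem \ref{thm:ext_coer} immediately gives us three of the four extensions equal, namely $\LE(Z|A)=\LED(Z|A)=\LEDC^+(Z|A)$, so the only remaining task is to show $\LEC^+(Z|A)=\LEDC^+(Z|A)$ and then chain these identities together.

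First I would verify that the hypothesis of Theorem \ref{thm:ext_conv} is met. Theorem \ref{thm:ext_conv} requires $\lpr$ to be centered convex on $\lcondz$, while we only know $\lpr$ is coherent on $\lcond$. Coherence implies convexity on $\lcond$ by \eqref{eq:consistency_implications}. Moreover, since $\xset$ is a linear space (so $0\in\xset$), we have $0|B\in\lcond$ for every $B\in\bsetp$, and internality \eqref{eq:intern} forces $\lpr(0|B)=0$. Hence $\lpr$ is centered convex on $\lcond$, which is precisely the hypothesis of Proposition \ref{pro:ext_cent}.

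Invoking Proposition \ref{pro:ext_cent}, the extension of $\lpr$ to $\lcondz=\lcond\cup\nmset$ obtained by declaring $\lpr(0|A)=0$ for every $0|A\in\nmset$ is centered convex on $\lcondz$. Theorem \ref{thm:ext_conv} then yields $\LEC^+(Z|A)=\LEDC^+(Z|A)$ for every $Z|A\in\dset^*$. Combined with the chain $\LE(Z|A)=\LED(Z|A)=\LEDC^+(Z|A)$ from Theorem \ref{thm:ext_coer}, we conclude $\LE(Z|A)=\LED(Z|A)=\LEC^+(Z|A)=\LEDC^+(Z|A)$, as required.

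Since both quoted theorems and Proposition \ref{pro:ext_cent} do the heavy lifting, no real obstacle arises; the only point requiring care is the subtle one of passing from \emph{coherent} on $\lcond$ to \emph{centered convex} on $\lcondz$, i.e., making sure the zero-valued extension on $\nmset$ does not spoil centered convexity, which is exactly what Proposition \ref{pro:ext_cent} guarantees.
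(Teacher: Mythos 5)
Your proof is correct and follows essentially the same route as the paper, which likewise just chains Theorem \ref{thm:ext_coer} (giving $\LE=\LED=\LEDC^+$) with Theorem \ref{thm:ext_conv} (giving $\LEC^+=\LEDC^+$). The only difference is that you explicitly verify the hypothesis of Theorem \ref{thm:ext_conv} — passing from coherence on $\lcond$ to centered convexity on $\lcondz$ via \eqref{eq:consistency_implications}, \eqref{eq:intern} and Proposition \ref{pro:ext_cent} — a step the paper leaves implicit; this is a legitimate and welcome bit of extra care, not a different argument.
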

\begin{proof}
	Because of Theorem \ref{thm:ext_coer},
	$\LE(Z|A)=\LED(Z|A)=\LEDC^+(Z|A)$,
	while $\LEDC^+(Z|A)$
	$=\LEC^+(Z|A)$ is due to Theorem \ref{thm:ext_conv}.
\end{proof}
The Sandwich Theorem ensures that the simpler $2$-convex natural extension may be enough to compute the natural extension,
the convex natural extension or even the $2$-coherent natural extension,
in the special case that the starting set is $\lcond$.
This seems to suggest that
only the rather weak properties of (centered) $2$-convexity really matter and need to be checked when looking for a least-committal coherent extension,
if $\lpr$ is initially assessed on a structured enough set and already coherent there.
Structured enough roughly means that the initial set has certain closure properties but also,
as we shall see in the next section,
that it is not `too poor' compared to the new environment we are extending the given uncertainty measure onto.

\section{Extending lower probabilities to lower previsions}
\label{sec:extension_lprob_to_lprev}
In this section we investigate a further,
different extension problem.
The starting environment is now the set $\asetpa$ (cf. Definition \ref{def:special_structures});
$\lpr$ is a lower probability defined on $\asetpa$.
We wish to extend $\lpr$ to $\lset$,
the set of all gambles on $\prt$.

Whilst the initial uncertainty assignment $\lpr$ is still `full' in the sense
that it evaluates all the events depending on $\prt$,
we are operating in an unconditional setting now and,
what is more important,
the extension introduces lower previsions in a pre-existing world of lower probabilities only.

In the next Sections \ref{subsec:2_coherent_ne} and \ref{subsec:2_convex_ne},
simplified expressions for the $2$-coherent and $2$-convex natural extensions are obtained.
They are discussed in Section \ref{subsec:more_on_ne},
where a further variant for computing $\LEDC$ is derived.

\subsection{The $2$-coherent natural extension}
\label{subsec:2_coherent_ne}
Suppose throughout this section that the starting $\lpr$ is $2$-coherent on $\asetpa$.
Preliminarily,
observe that,
given $Z\in\lset$,
Definition \ref{def_nat_ext} b) specialises to
\begin{eqnarray*}
	\LED(Z)=\sup\LD(Z)
\end{eqnarray*}
where
\begin{align}
\label{eq:LD}
\begin{split}
\LD(Z)=\{&\alpha:\sup\{s_1(E-\lpr(E))-(Z-\alpha)\}<0,\\
         &\mbox{ for some } E\in\asetpa, s_1\geq 0\}.
\end{split}
\end{align}
The special structure of $\LD(Z)$ in \eqref{eq:LD} permits the derivation of alternative expressions for $\LED$:
\begin{proposition}
	\label{pro:extending_2coer_to_Z}
Let $\lpr$ be $2$-coherent on $\asetpa$ and $Z\in\lset$.
Define
$\asetz=\{E\in\asetpa\setminus\{\varnothing,\Omega\}: \inf(Z|E)>\inf(Z|\nega{E})\}$,
$\infa=\{\inf(Z|E):E\in\asetz\}$, $F_z=(Z\geq z)$,
$\forall z\in\rset$.
Then,
\begin{align}
\label{eq:2cohneexpr1}
\LED(Z)&=\max\{\inf Z,\sup\{\inf(Z|E)\lpr(E)+\inf(Z|\nega{E})(1-\lpr(E)):E\in\asetz\}\}\\
\label{eq:2cohneexpr2}
&=\max\{\inf Z,\sup\{\inf(Z|F_z)\lpr(F_z)+\inf(Z|\nega{F_z})(1-\lpr(F_z)):z\in \infa\}\}.
\end{align}
\end{proposition}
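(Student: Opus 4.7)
The plan is to unpack the definition of $\LD(Z)$ in \eqref{eq:LD} and evaluate the resulting optimisation directly. Inverting the strict inequality yields
\[
\LED(Z) = \sup\{\inf\{Z - s_1(E - \lpr(E))\} : E \in \asetpa,\ s_1 \geq 0\}.
\]
Splitting $\Omega = E \cup \nega{E}$, the integrand equals $Z - s_1(1-\lpr(E))$ on $E$ and $Z + s_1 \lpr(E)$ on $\nega{E}$, so the inner infimum equals $\min\{f_E(s_1),\,g_E(s_1)\}$ with $f_E(s_1) = \inf(Z|E) - s_1(1 - \lpr(E))$ and $g_E(s_1) = \inf(Z|\nega{E}) + s_1 \lpr(E)$. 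Thus $\LED(Z) = \sup_{E, s_1} \min\{f_E(s_1), g_E(s_1)\}$.

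2-coherence forces $\lpr(E) \in [0,1]$ (testing Definition~\ref{def:consistency}(b) directly on $\Omega$ and $\varnothing$ gives $\lpr(\Omega) = 1$, $\lpr(\varnothing) = 0$, and internality~\eqref{eq:intern} handles nonempty proper events), so $f_E$ is non-increasing and $g_E$ is non-decreasing in $s_1$. Choosing $s_1 = 0$ already yields the baseline $\LED(Z) \geq \inf Z$. If $E \notin \asetz$, including the boundary cases $E \in \{\varnothing, \Omega\}$, in which the dependence on $s_1$ vanishes, the min is pinned by $f_E(s_1) \leq \inf(Z|E) = \inf Z$ for every $s_1 \geq 0$, so such an $E$ contributes at most $\inf Z$. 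If $E \in \asetz$, the non-increasing $f_E$ and the non-decreasing $g_E$ meet at $s_1^* = \inf(Z|E) - \inf(Z|\nega{E}) > 0$, where the max-min equals
\[
g_E(s_1^*) = \inf(Z|E)\lpr(E) + \inf(Z|\nega{E})(1 - \lpr(E));
\]
this expression also matches the boundary cases $\lpr(E) \in \{0, 1\}$ by inspection. Taking the sup over $E \in \asetpa$ delivers \eqref{eq:2cohneexpr1}.

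For \eqref{eq:2cohneexpr2}, I would show that the sup in \eqref{eq:2cohneexpr1} can be restricted, without loss, to events of the form $F_z$ with $z \in \infa$. Given $E \in \asetz$, set $z = \inf(Z|E) \in \infa$; then $E \subseteq F_z$ and $\nega{F_z} \subseteq \nega{E}$, yielding $\inf(Z|F_z) \geq z = \inf(Z|E)$, $\inf(Z|\nega{F_z}) \geq \inf(Z|\nega{E})$, and monotonicity~\eqref{eq:monot}, necessary for 2-coherence, gives $\lpr(F_z) \geq \lpr(E)$. Writing $a = \inf(Z|F_z)$, $b = \inf(Z|\nega{F_z})$, $p = \lpr(F_z)$, the two-stage bound
\[
ap + b(1-p) \geq \inf(Z|E)p + \inf(Z|\nega{E})(1-p) \geq \inf(Z|E)\lpr(E) + \inf(Z|\nega{E})(1-\lpr(E))
\]
(first use $a \geq \inf(Z|E)$ and $b \geq \inf(Z|\nega{E})$; then combine $\inf(Z|E) > \inf(Z|\nega{E})$ with $p \geq \lpr(E)$) shows the $F_z$-contribution dominates the $E$-contribution. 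Since $\nega{F_z} = \{Z < z\}$ is nonempty (otherwise $z \leq \inf Z$, contradicting $E \in \asetz$), one has $\inf(Z|\nega{F_z}) < z \leq \inf(Z|F_z)$, hence $F_z \in \asetz$, completing the reduction and yielding \eqref{eq:2cohneexpr2}.

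The principal obstacle is the chained-inequality comparison in the third paragraph: three quantities move simultaneously and the order of applying the inequalities matters, since the strict gap $\inf(Z|E) > \inf(Z|\nega{E})$ can only be exploited at the second stage, after the two infima of $Z$ on $F_z$ and $\nega{F_z}$ have been bounded from below. A secondary routine check is needed for the boundary cases of the second paragraph, where $\lpr(E) \in \{0, 1\}$ or $E \in \{\Omega, \varnothing\}$ and the formula has to be verified directly rather than via the crossing point $s_1^*$.
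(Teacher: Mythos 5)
Your proposal is correct and follows essentially the same route as the paper's proof: decompose $\LD(Z)$ as a union over $E\in\asetpa$ of per-event sets, optimise over $s_1$ for each $E$ (the paper splits $\LD^E(Z)$ into two intervals whose suprema meet at $s_1=\inf(Z|E)-\inf(Z|\nega{E})$, which is exactly your crossing point of $f_E$ and $g_E$), and then reduce to the events $F_z$ by showing each $E\in\asetz$ is dominated by $F_{\inf(Z|E)}\in\asetz$ via monotonicity of $\lpr$. The only cosmetic difference is that the paper establishes the equalities $\inf(Z|F_{\ze})=\inf(Z|E)$ and $\inf(Z|\nega{F_{\ze}})=\inf(Z|\nega{E})$ where you use the corresponding inequalities, which suffice for the domination step.
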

\begin{proof}
	Preliminarily,
	note that $\asetz$ is empty iff $Z$ is constant, $Z=c\in\rset$.
	In such a case,
	the suprema in \eqref{eq:2cohneexpr1}, \eqref{eq:2cohneexpr2} are
	$-\infty$ and both formulae give $\LED(Z)=\inf Z = c$.
	Assume then that $\asetz$ is non-empty.
	
	Let us now prove \eqref{eq:2cohneexpr1}.
	Define, for any $E\in\asetpa$,
	\begin{align*}
		\LD^E(Z)&=\{\alpha:\sup\{s_1(E-\lpr(E))-(Z-\alpha)\}<0,
		\mbox{ for some } s_1\geq 0\},\\
		\ale&=\sup\LD^E(Z).
	\end{align*}
	Recalling \eqref{eq:LD}, we may write $\LD(Z)$ as
	\begin{align}
		\label{eq:altexprLD}
		\LD(Z)=\bigcup_{E\in\asetpa}\LD^E(Z),
	\end{align}
	so that
	\begin{align}
		\label{eq:cupLD}
		\LED(Z)=\sup\{\ale:E\in\asetpa\}\}.
	\end{align} 
	We distinguish two cases:
	\begin{itemize}
		\item[1)]
		If $E\in\{\varnothing, \Omega\}$,
		then,
		recalling that $\lpr(\varnothing)=0$, $\lpr(\Omega)=1$ by $2$-coherence of $\lpr$,
		\begin{align*}
			\LD^\varnothing(Z)&=\LD^\Omega(Z)=\{\alpha:\alpha<\inf Z\}.\\
			\alpha^{\varnothing}&=\alpha^{\Omega}=\inf Z.
		\end{align*}
		\item[2)]
		Let $E\notin\{\varnothing, \Omega\}$.
		
		Since, for any $s_1\geq 0$,
		$\sup\{s_1(E-\lpr(E))-(Z-\alpha)\}<0$ iff
		$\alpha<s_1 \lpr(E)+\inf\{Z-s_1 E\}$,
		we can equivalently write $\LD^E(Z)$ as
		\begin{align}
			\label{eq:altae}
			\begin{split}
				\LD^E(Z)=\{\alpha:\alpha<s_1\lpr(E)+&\min\{\inf(Z|E)-s_1,\inf(Z|\nega{E})\},\\
				&\mbox{ for some } s_1\geq 0\}.
			\end{split}
		\end{align}
		Two situations are possible:
		\begin{itemize}
			\item[a)]
			If $\inf(Z|E)\leq\inf(Z|\nega{E})$,
			then $\inf(Z|E)=\inf Z$ and,
			recalling also that $2$-coherence of $\lpr$ implies $\lpr(E)\in [0,1]$
			(follows from \eqref{eq:intern}),
			we get
			\begin{align*}
				\LD^E(Z)&=\{\alpha:\alpha<s_1\lpr(E)+\inf(Z|E)-s_1, \mbox{ for some } s_1\geq 0\}\\
				&=\{\alpha:\alpha<\inf(Z|E)-s_1(1-\lpr(E)), \mbox{ for some } s_1\geq 0\}\\
				&=\{\alpha:\alpha<\inf Z\}=\LD^{\Omega}(Z),\\
				\ale&=\sup\LD^E(Z)=\inf Z=\alpha^{\Omega}.	
			\end{align*}
			
			\item[b)]
			Let $\inf(Z|E)>\inf(Z|\nega{E})$.
			Recalling \eqref{eq:altae}, define
			\begin{align*}
				L_{2,1}^E(Z)=&\{\alpha:\alpha<\inf(Z|E)-s_1(1-\lpr(E)),\\
				&\mbox{ for some } s_1\geq \inf(Z|E)-\inf(Z|\nega{E})\}\\
				L_{2,2}^E(Z)=&\{\alpha:\alpha<s_1\lpr(E)+\inf(Z|\nega{E}),\\
				&\mbox{ for some } s_1\in[0,\inf(Z|E)-\inf(Z|\nega{E})]\}.
			\end{align*}
			It holds that $\LD^E(Z)=L_{2,1}^E(Z)\ \cup\ L_{2,2}^E(Z)$,
			$\ale=\max\{\sup L_{2,1}^E(Z),\\ \sup L_{2,2}^E(Z)\}$.
			Noting that
			both $L_{2,1}^E(Z)$ and $L_{2,2}^E(Z)$ are intervals and their suprema coincide and
			are achieved by putting $s_1=\inf(Z|E)-\inf(Z|\nega{E})$,
			we get
			\begin{align}
			\label{eq:LE2_less}
			\nonumber
			    \LD^E(Z)&=L_{2,1}^E(Z)=L_{2,2}^E(Z)\\
				\ale&=\inf(Z|E)\lpr(E)+\inf(Z|\nega{E})(1-\lpr(E)).
			\end{align}
		\end{itemize}
		We conclude from \eqref{eq:altexprLD} that
		\begin{align}
		\label{eq:altexprLD2}
		\LD(Z)=\bigcup_{E\in\asetz}\LD^E(Z)\cup\LD^{\Omega}(Z)
		\end{align}
		that is, $\Omega$ and the events in $\asetz$ suffice
		to compute $\LED(Z)$ by means of \eqref{eq:cupLD}.
		This gives \eqref{eq:2cohneexpr1}.
	\end{itemize}
	
	To prove \eqref{eq:2cohneexpr2},
	we show that, defining
	$$\ze=\inf(Z|E),\forall E\in\asetpa\setminus\{\varnothing,\Omega\},$$
	hence
	$F_{\ze}=(Z\geq \ze)$, it holds that
	\begin{eqnarray}
	\label{eq:subLD}
	\forall E\in\asetz, F_{\ze}\in\asetz \mbox{ and }
	\LD^E(Z)\subseteq\LD^{F_{\ze}}(Z).
	\end{eqnarray}
	We preliminarily note that:
	\begin{itemize}
		\item[i)]
		$\inf(Z|F_{\ze})=\inf(Z|E)$.
		
		In fact,
		$\inf(Z|F_{\ze})\leq\inf(Z|E)$ because $E\Rightarrow F_{\ze}$,
		and $\inf(Z|F_{\ze})=\inf(Z|Z\geq\inf(Z|E))\geq\inf(Z|E)$
		by definition of the infimum.
		\item[ii)]
		$\lpr(E)\leq\lpr(F_{\ze})$,
		again because $E\Rightarrow F_{\ze}$ (cf. \eqref{eq:monot}).
		\item[iii)]
		If $\inf(Z|E)>\inf(Z|\nega{E})$,
		then $\inf(Z|\nega{F_{\ze}})=\inf(Z|\nega{E})$.
		
		In fact,
		$\inf(Z|E)>\inf(Z|\nega{E})$ implies
		$\nega{F_{\ze}}=(Z<\inf(Z|E))\neq\varnothing$.
		Then,
		$\inf(Z|\nega{F_{\ze}})=\inf Z=\min\{\inf(Z|E),\inf(Z|\nega{E})\}=\inf(Z|\nega{E})$.
		\end{itemize}
	Let $E\in\asetz$, i.e. $\inf(Z|E)>\inf(Z|\nega{E})$. Then,
	\begin{itemize}
		\item[a)]
		$F_{\ze}\in\asetz$.
		
		In fact, 
		$\inf(Z|F_{\ze})=\inf(Z|E)>\inf(Z|\nega{E})=\inf(Z|\nega{F_{\ze}})$,
		by i) and iii).

		\item[b)]
		$\LD^E(Z)\subseteq\LD^{F_{\ze}}(Z)$.
		
		Let $\alpha\in\LD^E(Z)$.
		Then, by \eqref{eq:LE2_less},
		\begin{align}
		\nonumber
	     \alpha&<\inf(Z|E)\lpr(E)+\inf(Z|\nega{E})(1-\lpr(E))\\
	     \nonumber
	     &=\lpr(E)(\inf(Z|E)-\inf(Z|\nega{E}))+\inf(Z|\nega{E})\\
	     \nonumber
	     &\leq\lpr(F_{\ze})(\inf(Z|F_{\ze})-\inf(Z|\nega{F_{\ze}}))+\inf(Z|\nega{F_{\ze}})\\
	     \nonumber
	     &=\inf(Z|F_{\ze})\lpr(F_{\ze})+\inf(Z|\nega{F_{\ze}})(1-\lpr(F_{\ze})),
		\end{align}
		using i), ii), iii) and $\lpr(E)\geq 0$ at the weak inequality.
		Hence $\alpha\in\LD^{F_{\ze}}(Z)$, again by \eqref{eq:LE2_less}.
		
	\end{itemize}

	Therefore, \eqref{eq:subLD} holds.
	This implies that all distinct events $E\in\asetz$ such that
	$\inf(Z|E)$ has a common value,
	equal to $\inf(Z|F_{\ze})$,
	might be replaced by the single event $F_{\ze}$ in \eqref{eq:altexprLD2}.
	Hence, we may simplify \eqref{eq:altexprLD2}:
	\begin{align*}
		\LD(Z)=\bigcup_{z\in \infa}\LD^{F_{z}}(Z)\cup\LD^{\Omega}(Z).
	\end{align*}
	This gives \eqref{eq:2cohneexpr2}.
\end{proof}
%
%
%

\subsection{The $2$-convex natural extension}
\label{subsec:2_convex_ne}
Suppose now that $\lpr$ is $2$-convex on $\asetpa$.
Again,
simplified expressions are available for the $2$-convex natural extension $\LEDC(Z)$, $Z\in\lset$.
\begin{proposition}
	\label{pro:extending_2conv_to_Z}
	Let $\lpr$ be centered $2$-convex on $\asetpa$ and $Z\in\lset$.
	Define
	$F_z=(Z\geq z)$,
	$\forall z\in\rset$,
	$\bsetz=\{E\in\asetpa\setminus\{\varnothing,\Omega\}: \inf(Z|E)>\inf(Z|\nega{E})-1\}$,
	$\infb=\{\inf(Z|E):E\in\bsetz\}$.
	Then,
	\begin{align}
	\label{eq:2convneexpr1}
	\LEDC(Z)&=\max\{\inf Z,\sup\{\lpr(E)+\min\{\inf(Z|E)-1,\inf(Z|\nega{E})\}: E\in\bsetz\}\}\\
	\label{eq:2convneexpr2}
	&=\max\{\inf Z,\sup\{\lpr(F_z)+\min\{\inf(Z|F_z)-1,\inf(Z|\nega{F_z})\}: z\in \infb\}\}.
	\end{align}
\end{proposition}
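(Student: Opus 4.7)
My plan is to mirror the proof of Proposition \ref{pro:extending_2coer_to_Z}, adapted to the $2$-convex definition in which the coefficient $s_1$ is forced to equal $1$. I would start by writing $\LDC(Z) = \bigcup_{E \in \asetpa} \LDC^E(Z)$, where
\begin{align*}
\LDC^E(Z) = \{\alpha : \sup\{E - \lpr(E) - (Z - \alpha)\} < 0\},
\end{align*}
so that $\LEDC(Z) = \sup\{\alpha^E : E \in \asetpa\}$ with $\alpha^E = \sup \LDC^E(Z)$. The sup inequality rewrites as $\alpha < \lpr(E) + \inf\{Z - E\}$, and since $Z - E$ equals $Z - 1$ on $E$ and $Z$ on $\nega{E}$, for $E \notin \{\varnothing, \Omega\}$ I obtain $\alpha^E = \lpr(E) + \min\{\inf(Z|E) - 1, \inf(Z|\nega{E})\}$. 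For the two boundary events, centered $2$-convexity forces $\lpr(\varnothing) = 0$, and applying axiom (A1) from Theorem \ref{thm:characterisation} d) to the pair $(\Omega, \varnothing)$ forces $\lpr(\Omega) = 1$; a direct computation then gives $\alpha^{\varnothing} = \alpha^{\Omega} = \inf Z$.

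To deduce \eqref{eq:2convneexpr1}, I would show that every $E \notin \bsetz$ may be discarded. Indeed, $\inf(Z|E) \leq \inf(Z|\nega{E}) - 1$ makes $\inf(Z|E) - 1$ the active branch of the min, gives $\inf Z = \inf(Z|E)$, and, combined with $\lpr(E) \leq 1$ from internality \eqref{eq:intern}, yields $\alpha^E = \lpr(E) + \inf(Z|E) - 1 \leq \inf(Z|E) = \inf Z$. Absorbing the boundary terms into the outer $\max\{\inf Z, \cdot\}$ then delivers \eqref{eq:2convneexpr1}. For \eqref{eq:2convneexpr2} I would mimic the $\ze, F_\ze$ substitution in the proof of Proposition \ref{pro:extending_2coer_to_Z}: fix $E \in \bsetz$ with $\alpha^E > \inf Z$, set $\ze = \inf(Z|E)$ and $F_\ze = (Z \geq \ze)$, and verify (i) $\inf(Z|F_\ze) = \ze$, (ii) $\lpr(E) \leq \lpr(F_\ze)$ via monotonicity \eqref{eq:monot}, (iii) $\nega{F_\ze} \neq \varnothing$ with $\inf(Z|\nega{F_\ze}) = \inf Z$. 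This gives $F_\ze \in \bsetz$ and $\alpha^{F_\ze} \geq \alpha^E$, so events in $\bsetz$ sharing a common $\ze = z$ collapse onto $F_z$ and the sup in \eqref{eq:2convneexpr1} becomes a sup over $z \in \infb$.

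The main obstacle lies in the $F_\ze$ substitution: $\lpr(F_\ze) \geq \lpr(E)$ is immediate, but in general $\inf(Z|\nega{F_\ze}) \leq \inf(Z|\nega{E})$, which a priori could shrink the min and depress $\alpha^{F_\ze}$ below $\alpha^E$. The key observation that rescues the argument is that an $E \in \bsetz$ with $\alpha^E > \inf Z$ is forced to satisfy $\inf(Z|E) > \inf(Z|\nega{E})$, whence $\inf(Z|\nega{E}) = \inf Z = \inf(Z|\nega{F_\ze})$ and the two mins coincide; the inequality $\alpha^{F_\ze} \geq \alpha^E$ then reduces to monotonicity of $\lpr$. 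Inactive events $E \in \bsetz$ (those with $\alpha^E \leq \inf Z$) and the edge case $F_\ze = \Omega$, which occurs precisely when $\inf(Z|E) = \inf Z$, contribute nothing beyond $\inf Z$ and are swallowed by the outer maximum.
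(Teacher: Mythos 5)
Your proposal is correct and follows essentially the same route as the paper: the same decomposition $\LDC(Z)=\bigcup_{E}\LDC^E(Z)$ with $\ale=\lpr(E)+\min\{\inf(Z|E)-1,\inf(Z|\nega{E})\}$, the same elimination of events outside $\bsetz$ via $\lpr(E)\leq 1$, and the same collapse onto $F_{\ze}$ using $\inf(Z|F_{\ze})=\inf(Z|E)$, monotonicity of $\lpr$, and the observation that $\inf(Z|\nega{F_{\ze}})=\inf(Z|\nega{E})=\inf Z$ once $\inf(Z|E)>\inf(Z|\nega{E})$. The only differences are cosmetic (you bound $\ale$ by $\inf Z$ where the paper proves the set inclusion $\LDC^E(Z)\subseteq\LDC^{\nega{E}}(Z)$, and you restrict the $F_{\ze}$ substitution to the active events rather than proving it for all $E$), so no further comparison is needed.
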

\begin{proof}
	In its first part, the proof is analogous to that of Proposition \ref{pro:extending_2coer_to_Z}
	(but note that $\forall Z$, $\bsetz$ is not empty if $\prt\neq\{\Omega\}$,
	unlike $\asetz$ in Proposition \ref{pro:extending_2coer_to_Z}).
	
	Defining
	\begin{align*}
	\LDC^E(Z)&=\{\alpha:\sup\{(E-\lpr(E))-(Z-\alpha)\}<0\},\\
	\ale&=\sup\LDC^E(Z),
	\end{align*}
	we have 
	\begin{align}
	\label{eq:altexprLDc}
	\LDC(Z)=\bigcup_{E\in\asetpa}\LDC^E(Z).
	\end{align}
	Then, similarly to Proposition \ref{pro:extending_2coer_to_Z}, we may compute
		\begin{align}
		\label{eq:cupLDC}
		\LEDC(Z)=\sup\{\ale:E\in\asetpa\}.
		\end{align}
 	To obtain \eqref{eq:2convneexpr1} from \eqref{eq:cupLDC}, we distinguish two cases:
	\begin{itemize}
		\item[1)]
		If $E\in\{\varnothing, \Omega\}$,
		then,
		since $\lpr(\varnothing)=0$, $\lpr(\Omega)=1$ by centered $2$-convexity of $\lpr$ (cf. \eqref{eq:intern}),
		we get
		\begin{align}
		\label{eq:Lzero}
		    \LDC^\varnothing(Z)&=\LDC^\Omega(Z)=\{\alpha:\alpha<\inf Z\}\\
			\label{eq:Lzero_bis}
			\alpha^{\varnothing}&=\alpha^{\Omega}=\inf Z
		\end{align} 
		\item[2)]
		If $E\notin\{\varnothing, \Omega\}$, then,
		since $\sup\{(E-\lpr(E))-(Z-\alpha)\}<0$ iff
		$\alpha<\lpr(E)+\inf\{Z-E\}$
		iff $\alpha<\lpr(E)+\min\{\inf(Z|E)-1,\inf(Z|\nega{E})\}$,
		\begin{align}
		\label{eq:LDC_expression}
		    \LDC^{E}(Z)&=\{\alpha:\alpha<\lpr(E)+\min\{\inf(Z|E)-1,\inf(Z|\nega{E})\}\\
		 \label{eq:LDC_expression_bis}
			\ale&=\lpr(E)+\min\{\inf(Z|E)-1,\inf(Z|\nega{E})\}
		\end{align} 
	\end{itemize}
We note that any set $\LDC^{E}(Z)$ such that $\inf(Z|E)\leq\inf(Z|\nega{E})-1$ may be dropped from the union in \eqref{eq:altexprLDc},
since then $\LDC^{E}(Z)\subseteq\LDC^{\nega{E}}(Z)$.

In fact, let $\inf(Z|E)\leq\inf(Z|\nega{E})-1$.
Then, by \eqref{eq:LDC_expression},
\begin{align*}
	\LDC^{E}(Z)&=\{\alpha:\alpha<\lpr(E)+\inf(Z|E)-1\}\\
	\LDC^{\nega{E}}(Z)&=\{\alpha:\alpha<\lpr(\nega{E})+\inf(Z|E)\}.
\end{align*}
Take $\alpha\in\LDC^{E}(Z)$.
Recalling that $\lpr(E)\leq 1$ and $\lpr(\nega{E})\geq 0$ by $2$-convexity, we get
$\alpha<\lpr(E)+\inf(Z|E)-1\leq\inf(Z|E)\leq\lpr(\nega{E})+\inf(Z|E)$,
i.e. $\alpha\in\LDC^{\nega{E}}(Z)$.

The above argument simplifies \eqref{eq:altexprLDc}:
\begin{align*}
\LDC(Z)=\bigcup_{E\in\bset^{Z}}\LDC^E(Z)\cup\LDC^{\Omega}(Z).
\end{align*}

This gives \eqref{eq:2convneexpr1} from \eqref{eq:cupLDC}.
	
	To prove \eqref{eq:2convneexpr2},
	define,
	as in Proposition \ref{pro:extending_2coer_to_Z},
	$\ze=\inf(Z|E),\forall E\in\asetpa\setminus\{\varnothing,\Omega\}$,
	so that $F_{\ze}=(Z\geq \ze)$.

    We prove that
	\begin{eqnarray}
	\label{eq:subLDC}
	\LDC^E(Z)\subseteq\LDC^{F_{\ze}}(Z),\ \forall E\in\asetpa\setminus\{\varnothing,\Omega\},
	\end{eqnarray}
	with $F_{\ze}=\Omega$ or $F_{\ze}\in\mathcal{B}^{Z}$.
	
	For this, take $E\in\asetpa\setminus\{\varnothing,\Omega\}$, $\alpha\in \LDC^E(Z)$,
	i.e. such that $\alpha$ satisfies \eqref{eq:LDC_expression}.

	We prove that $\alpha\in\LDC^{F_{\ze}}(Z)$,
	i.e. that $\alpha$ satisfies either the inequality
	\begin{eqnarray}
	\label{eq:alpha_fzebis_c}
	\alpha<\inf Z,
	\end{eqnarray}
	when $F_{\ze}=\Omega$, or the inequality
	\begin{eqnarray}
	\label{eq:alpha_fze_c}
	\alpha<\lpr(F_{\ze})+\min\{\inf(Z|F_{\ze})-1,\inf(Z|\nega{F_{\ze}})\},
	\end{eqnarray} 
	when $F_{\ze}\neq\Omega$.	
	
	We already know that $\inf(Z|F_{\ze})=\inf(Z|E)$, $\lpr(E)\leq\lpr(F_{\ze})$
	and that 
	$\inf(Z|E)>\inf(Z|\nega{E})$ implies $\inf(Z|\nega{F_{\ze}})=\inf(Z|\nega{E})$
	from i), ii) and iii) in the proof of Proposition~\ref{pro:extending_2coer_to_Z}.
    Two alternatives occur:
	\begin{itemize}
		\item[1)]
		If \emph{$\inf(Z|E)=\inf Z$},
		then $F_{\ze}=\Omega$.
		
		Consider $\alpha$ satisfying \eqref{eq:LDC_expression}.
		Then, we get
		$\alpha<\lpr(E)+\min\{\inf(Z|E)-1,\inf(Z|\nega{E})\}\leq\lpr(F_{\ze})+\min\{\inf Z-1,\inf(Z|\nega{E})\}=\lpr(\Omega)+\inf Z-1=\inf Z$.
		This proves that $\alpha$ satisfies equation \eqref{eq:alpha_fzebis_c},
		i.e. $\alpha\in\LDC^{\Omega}(Z)$.
		\item[2)]
		If \emph{$\inf(Z|E)>\inf Z$},
		then $\inf (Z|F_{\ze})=\inf(Z|E)>\inf Z=\inf(Z|\nega{E})=\inf (Z|\nega{F_{\ze}})>\inf (Z|\nega{F_{\ze}})-1$.
		This shows that $F_{\ze}\in\bsetz$.
		
		Again, take $\alpha$ satisfying \eqref{eq:LDC_expression}.
		Then, we get immediately $\alpha<\lpr(F_{\ze})+\min\{\inf (Z|E)-1,\inf (Z|\nega{E})\}=\lpr(F_{\ze})+\min\{\inf (Z|F_{\ze})-1,\inf (Z|\nega{F_{\ze}})\}$.
		Hence, $\alpha$ satisfies \eqref{eq:alpha_fze_c},
		i.e. $\alpha\in\LDC^{F_{\ze}}(Z)$.
	\end{itemize}
	We conclude that \eqref{eq:subLDC} holds and that:
	\begin{align}
	\label{eq:altexprLDC2}
	\LDC(Z)=\bigcup_{z\in \infb}\LDC^{F_{z}}(Z)\cup\LDC^{\Omega}(Z).
	\end{align}
This gives \eqref{eq:2convneexpr2}.
	
\end{proof}

\subsection{More on the $2$-coherent and $2$-convex natural extensions}
\label{subsec:more_on_ne}
Let us compare the two alternative expressions supplied for both $\LED$ and $\LEDC$
in Propositions \ref{pro:extending_2coer_to_Z} and \ref{pro:extending_2conv_to_Z}.
Equations
\eqref{eq:2cohneexpr1} and \eqref{eq:2convneexpr1} are of interest in themselves,
as they show that $\LED$ and $\LEDC$ may be expressed in a way rather different from their original definitions,
by involving the events of $\asetpa\setminus\{\varnothing,\Omega\}$ and the infima of $Z$ conditional on such events.
However,
it appears from the proofs of Propositions \ref{pro:extending_2coer_to_Z} and \ref{pro:extending_2conv_to_Z} that formulae \eqref{eq:2cohneexpr2} and \eqref{eq:2convneexpr2} are derived from, respectively,
\eqref{eq:2cohneexpr1} and \eqref{eq:2convneexpr1} by reducing the number of events effectively employed:
event $F_{\ze}=(Z\geq\inf(Z|E))$ is a representative for all events
$E\in\asetpa\setminus\{\varnothing,\Omega\}$ with respect to which $\inf(Z|E)$ takes the same value.
Thus \eqref{eq:2cohneexpr2} and \eqref{eq:2convneexpr2} ensure a further simplification over \eqref{eq:2cohneexpr1} and \eqref{eq:2convneexpr1}.

Formulae \eqref{eq:2cohneexpr2} and \eqref{eq:2convneexpr2} highlight another interesting feature of $\LED(Z)$ and $\LEDC(Z)$:
the only uncertainty evaluation they explicitly depend on is $\lpr(F_z)$,
$z\in\rset$
(more precisely, the range of $z$ may be restricted to either $\infa$ or $\infb$ in Propositions \ref{pro:extending_2coer_to_Z} or \ref{pro:extending_2conv_to_Z} respectively).
Since $\lpr(F_z)=\lpr(Z\geq z)$,
we come to the following conclusion:
\begin{quote}
The values of $\LED(Z)$ and of $\LEDC(Z)$ both depend on the evaluation of (restrictions of) the
\emph{lower decumulative distribution function}\footnote{
	We use this terminology by analogy with the classical definition of decumulative distribution function, also called decreasing distribution function or survival function and often referring to the events $(Z>z)$ rather than $(Z\geq z)$.
}
$\lpr(Z\geq z)$, $z\in\rset$.	
\end{quote}
For the considerations to follow a further simplification for the expression of $\LEDC$ will be useful.
It is stated in the next Proposition.
\begin{proposition}
	\label{pro:extending_2conv_to_Z_bis}
	In the same assumptions of Proposition \ref{pro:extending_2conv_to_Z},
	we have
	\begin{align}
		\label{eq:2convneexpr3}
		\LEDC(Z)&=\max\{\sup\{\lpr(F_z)+\inf(Z|F_z)-1: z\in \infb, z<\inf Z+1\},\\
		\nonumber &\quad\quad\quad\  \lpr(F_{\inf Z + 1})+\inf Z\}.
	\end{align}
\end{proposition}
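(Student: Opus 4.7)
The plan is to derive \eqref{eq:2convneexpr3} from the intermediate representation
\begin{align*}
\LDC(Z)=\bigcup_{z\in\infb}\LDC^{F_z}(Z)\cup\LDC^{\Omega}(Z)
\end{align*}
obtained within the proof of Proposition \ref{pro:extending_2conv_to_Z}, whence $\LEDC(Z)=\max\{\inf Z,\sup\{\alpha^{F_z}:z\in\infb\}\}$ with $\alpha^{F_z}=\lpr(F_z)+\min\{\inf(Z|F_z)-1,\inf(Z|\nega{F_z})\}$ by \eqref{eq:LDC_expression_bis}. First I would observe that for $z\in\infb$, writing $z=\inf(Z|E)$ with $E\in\bsetz$, the inclusion $E\subseteq F_z$ combined with the definition $F_z=(Z\geq z)$ forces $\inf(Z|F_z)=z$; moreover, if $z>\inf Z$, the atoms on which $Z$ approaches $\inf Z$ lie in $\nega{F_z}$ (since $\inf Z<z$), giving $\inf(Z|\nega{F_z})=\inf Z$. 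Hence $\alpha^{F_z}=\lpr(F_z)+\min\{z-1,\inf Z\}$; the degenerate case $z=\inf Z$ (if in $\infb$) yields $F_z=\Omega$ and contribution $\inf Z$, already captured by $\LDC^{\Omega}$.

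Next I would split the sup over $z\in\infb$ with $z>\inf Z$ according to whether $z\leq\inf Z+1$ or $z>\inf Z+1$. In the former case, $\alpha^{F_z}=\lpr(F_z)+z-1$, reproducing the first sup in \eqref{eq:2convneexpr3}. In the latter case, $\alpha^{F_z}=\lpr(F_z)+\inf Z$, and monotonicity of $\lpr$ applied to $F_z\subseteq F_{\inf Z+1}$ gives $\alpha^{F_z}\leq B:=\lpr(F_{\inf Z+1})+\inf Z$; the boundary value $z=\inf Z+1$, if it lies in $\infb$, also yields exactly $B$ since both branches of the min coincide there. Moreover, $\lpr(F_{\inf Z+1})\geq 0$ by \eqref{eq:intern}, so $B\geq\inf Z$ and the standalone $\inf Z$ term is absorbed into $B$ too.

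The main obstacle is to verify that $B$ is actually realised inside $\{\alpha^{F_z}:z\in\infb\}$, because $\inf Z+1$ itself need not belong to $\infb$. To resolve this, I would set $z^{*}=\inf(Z|F_{\inf Z+1})$ whenever $F_{\inf Z+1}\neq\varnothing$: then $z^{*}\geq\inf Z+1$, $F_{z^{*}}=F_{\inf Z+1}$, and $\inf(Z|\nega{F_{\inf Z+1}})=\inf Z$, which together show $F_{\inf Z+1}\in\bsetz$ and $z^{*}\in\infb$, with $\alpha^{F_{z^{*}}}=B$. When $F_{\inf Z+1}=\varnothing$, $\lpr(F_{\inf Z+1})=0$ and $B=\inf Z$, which is always dominated by the other contributions. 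Collecting the sup of $\lpr(F_z)+z-1$ over $z\in\infb$ with $z<\inf Z+1$ and the single value $B$ then delivers \eqref{eq:2convneexpr3}.
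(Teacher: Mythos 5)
Your proof is correct and follows essentially the same route as the paper's: both start from the decomposition $\LDC(Z)=\bigcup_{z\in\infb}\LDC^{F_z}(Z)\cup\LDC^{\Omega}(Z)$, collapse every $z\geq\inf Z+1$ onto the single representative $F_{\inf Z+1}$ (realised inside $\infb$ via $z^{*}=\inf(Z|F_{\inf Z+1})$, which is exactly the paper's $k$), and absorb the $\Omega$-term using $\lpr(F_{\inf Z+1})\geq 0$. The only cosmetic difference is that you compare the suprema $\alpha^{F_z}$ directly where the paper compares the sets $\LDC^{F_z}(Z)$ by inclusion.
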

\begin{proof}
Looking at \eqref{eq:2convneexpr2} in Proposition \ref{pro:extending_2conv_to_Z}, we realize that not all $z\in \infb$ are generally needed to obtain $\LDC(Z)$ in \eqref{eq:altexprLDC2}.

In fact, define $\overline{z}=\inf Z+1$, $k=\inf(Z|F_{\overline{z}})$ and
take $z\in \infb$ such that $z\geq\overline{z}$ (if any).
Then, $k\in \infb$ and
$\LDC^{F_{z}}\subseteq\LDC^{F_{\overline{z}}}=\LDC^{F_{k}}$.

To see this, note first that
$z\geq\overline{z}$
implies $F_{z}\neq\Omega$ and
\begin{align}
\label{eq:chain_inf}
\inf(Z|F_{z})=\inf(Z|Z\geq z)\geq z\geq\inf Z+1>\inf Z=\inf(Z|\nega{F_{z}}).
\end{align} 
Hence,
\begin{itemize}
	\item
	$\LDC^{F_{z}}\subseteq\LDC^{F_{\overline{z}}}$.
	
	In fact, from \eqref{eq:LDC_expression} and \eqref{eq:chain_inf},
	we derive
	\begin{align*}
	\LDC^{F_{z}}&=\{\alpha:\alpha<\lpr(F_{z})+\min\{\inf(Z|F_{z})-1,\inf Z\}\}\\
	&=\{\alpha:\alpha<\lpr(F_{z})+\inf Z\}\\
	&\subseteq\{\alpha:\alpha<\lpr(F_{\overline{z}})+\inf Z\}
	=\LDC^{F_{\overline{z}}},
	\end{align*}
	where the inclusion holds because $F_{z}\Rightarrow F_{\overline{z}}$ implies
	$\lpr(F_{z})\leq\lpr(F_{\overline{z}})$.
	\item 
	$F_{\overline{z}}= F_{k}$,
	hence $\LDC^{F_{\overline{z}}}=\LDC^{F_{k}}$, and $k\in \infb$.\footnote{
	This bullet is necessary, because $\overline{z}$ may not be an admissible value for $Z$.
	}
	
	Clearly, $F_{k}\Rightarrow F_{\overline{z}}$.
	For the reverse implication,
	note that for any $\omega$ such that $\omega\Rightarrow F_{\overline{z}}$ it is
	$Z(\omega)\geq\inf(Z|F_{\overline{z}})=k$, hence $\omega\Rightarrow F_{k}$.
	
	To prove that $k\in \infb$,
	put $z=\overline{z}$ in \eqref{eq:chain_inf}
	to get $\inf(Z|F_{\overline{z}})>\inf(Z|\nega{F_{\overline{z}}})-1$,
	hence $F_{\overline{z}}\in\mathcal{B}^Z$ and
	$k=\inf(Z|F_{\overline{z}})\in \infb$.
\end{itemize}
We conclude that
\eqref{eq:altexprLDC2} simplifies further,
since any $\LDC^{F_{z}}(Z)$ 
with $z>\inf Z + 1$ can be replaced with $\LDC^{F_{\inf Z + 1}}(Z)$:\footnote{
	Note that $\{z\in \infb: z\geq\inf Z +1\}=\varnothing$ implies  $\LDC^{F_{\inf Z + 1}}(Z)=\LDC^{\Omega}(Z)$.
}
\begin{align}
\label{eq:altexprLDC2further}
\LDC(Z)=\bigcup_{z\in \infb: z<\inf Z+1}\LDC^{F_{z}}(Z)\cup\LDC^{F_{\inf Z+1}}(Z)\cup\LDC^{\Omega}(Z).
\end{align}

Finally, in order to get \eqref{eq:2convneexpr3}
we have to compute the suprema of the sets $\LDC^{F_{\inf Z+1}}(Z)$ and $\LDC^{F_{z}}(Z)$, with $z\in \infb$, $z<\inf Z +1$.

Clearly, if $z=\inf Z$, $F_{z}=\Omega$,
hence, from \eqref{eq:Lzero_bis}, $	\sup\LDC^{F_{z}}=\inf Z$.

When $z=\inf Z+1$ or $z\in \infb$ and $\inf Z<z<\inf Z+1$,
we get instead
\begin{align}
\label{eq:lf2c_simplified1}
\sup\LDC^{F_{z}}&=\lpr(F_{z})+\inf(Z|F_{z})-1 &\mbox{ if } &\inf Z<z<\inf Z+1,\\
\label{eq:lf2c_simplified2}
\sup\LDC^{F_{z}}&=\lpr(F_{\inf X+1})+\inf(Z) &\mbox{ if } &z=\inf Z +1.
\end{align}
In fact,
\begin{itemize}
	\item
	Let $z\in(\inf Z, \inf Z+1)\cap \infb$.
	Since $z\in \infb$,
	there exists $E\in\mathcal{B}^Z$ such that $z=\inf(Z|E)=\inf(Z|F_z)$.
	This implies
	$\inf Z+1>z=\inf(Z|F_z)>\inf Z=\inf(Z|\nega{F_{z}})$.	
	Hence, \eqref{eq:lf2c_simplified1} follows from \eqref{eq:LDC_expression_bis}.
	\item
	If  $z=\inf Z +1$, then, by \eqref{eq:chain_inf},
	$\inf(Z|F_{\inf Z +1})-1\geq\inf(Z|\nega{F_{\inf Z +1}})=\inf Z$,
	which gives \eqref{eq:lf2c_simplified2},
	using \eqref{eq:LDC_expression_bis} again.
\end{itemize}
Now take the suprema in \eqref{eq:altexprLDC2further}, recalling \eqref{eq:lf2c_simplified1}, \eqref{eq:lf2c_simplified2}.
This gives \eqref{eq:2convneexpr3}.	
\end{proof}
Operationally,
formulae \eqref{eq:2cohneexpr1}, \eqref{eq:2cohneexpr2}, \eqref{eq:2convneexpr1}, \eqref{eq:2convneexpr3} offer simple ways of computing
$\LED$, $\LEDC$
when $\prt$ is finite.
The following example illustrates the procedures.
\begin{example}
	\label{exa:comparison_2c_2c}
	Let $\prt=\{a,b,c,d\}$, and define $\lpr$ on $\asetpa$ as follows:
	\begin{align*}
	&\lpr(a)=0.2, \lpr(b)=0, \lpr(c)=0.3, \lpr(d)=0.1;\\
	&\lpr(a\vee b)=0.3, \lpr(a\vee c)=0.5, \lpr(a\vee d)=0.4;\\
	&\lpr(b\vee c)=0.5, \lpr(b\vee d)=0.3, \lpr(c\vee d)=0.5;\\
	&\lpr(a\vee b\vee c)=0.6, \lpr(a\vee c\vee d)=0.7, \lpr(b\vee c\vee d)=0.7, \lpr(a\vee b\vee d)=0.7;\\
	&\lpr(\varnothing)=0, \lpr(\Omega)=1.
	\end{align*}
	\begin{itemize}
		\item
		$\lpr$ is $2$-coherent on $\asetpa$,
		hence also centered $2$-convex.
		To prove $2$-coherence, note that $\lpr$ satisfies the conditions of Proposition 4 in \cite{pel16bis}:
		$\lpr(A)\leq\lpr(B), \forall A,B\in\asetpa: A\Rightarrow B$, $\lpr(A)+\lpr(\nega{A})\leq 1, \forall A\in\asetpa$,
		$\lpr(\varnothing)=0$, $\lpr(\Omega)=1$.
		\item
		$\lpr$ is not coherent.
		Use the necessary condition for coherence
		$\lpr(A\vee B)\geq\lpr(A)+\lpr(B)$,
		if $A\wedge B=\varnothing$ \cite[Section 2.7.4 (e)]{wal91}
		with $A=a$, $B=b\vee c$:
		$\lpr(a\vee b\vee c)=0.6<\lpr(a)+\lpr(b\vee c)=0.7$.
	\end{itemize}
	Now consider $Z\in\lset$,
	defined by:
	$Z(a)=-1$, $Z(b)=0$, $Z(c)=1$, $Z(d)=3$.
	\begin{itemize}
		\item
		Let us compute $\LEDC(Z)$,
		using \eqref{eq:2convneexpr3}.
		Here $\infb=\{0,1,3\}$,
		$\inf Z =-1$,
		hence
		$\{z\in \infb:z<\inf Z+1\}=\varnothing$ and
		$F_{\inf Z + 1}=F_{0}=b\vee c\vee d$.
		Therefore,
		\begin{align*}
		\LEDC(Z)&=\max\{\min Z,	\lpr(b\vee c\vee d)+\inf Z\}\\
		&=\max\{-1,0.7+(-1)\}=-0.3.
		\end{align*}
		\item
		To compute $\LED$ by means of \eqref{eq:2cohneexpr1},
		we have to check the sign of $\inf(Z|E)-\inf(Z|\nega{E})$ for each $E\in\asetpa-\{\varnothing,\Omega\}$.
		Then we determine the supremum (here maximum) in \eqref{eq:2cohneexpr1}.
		It turns out that it is achieved by considering the event
		$E=c\vee d\in\asetz$.
		We obtain
		\begin{align*}
		\LED(Z)&=\max\{\min Z, \min(Z|c\vee d)\lpr(c\vee d)+\min(Z|a\vee b)(1-\lpr(c\vee d))\}\\
		&=\max\{-1, 1\cdot 0.5+(-1)\cdot (1-0.5)\}=0.
		\end{align*} 
	\end{itemize}
	Note that $\LEDC(Z)=-0.3<\LED(Z)=0$.
\end{example}
\begin{remark}[How vacuous is the $2$-convex natural extension?]
$ $
\newline
Equation \eqref{eq:2convneexpr3} is also relevant in focusing our intuition that the $2$-convex natural extension should intrinsically tend to be vague.
Results like the Sandwich Theorem \ref{thm:final_sandwich} seem to indicate the opposite,
that there are instances such that $\LEDC$ is not so vacuous after all.
However,
this is not the general rule.
In fact,
we immediately obtain the following upper bound for $\LEDC$ from \eqref{eq:2convneexpr3}:
\begin{align}
\label{eq:upper_bound_2conv}
\LEDC(Z)\leq\inf Z + 1, \forall Z\in\lset.
\end{align}
Inequality \eqref{eq:upper_bound_2conv} shows that the $2$-convex natural extension may be nearly vacuous.
This is what happens
if $\sup Z-\inf Z\gg 1$,
no matter whether $\lpr$ is already coherent or at least convex on $\asetpa$:
compared to the range of $Z$, $\LEDC(Z)$ nearly overlaps with $\inf Z$,
its smallest admissible value.
Note that $\LEDC(Z)$ is not necessarily vague when $\sup Z-\inf Z<1$,
but $\LEDC(\lambda Z)$ is so,
if $\lambda>0$ is large enough.
This substantial asymmetry in treating extensions of gambles differing only by a scaling factor is not surprising:
axiom (A5) does not necessarily apply to $2$-convex previsions,
being actually a distinguishing feature between them and the $2$-coherent previsions.
\end{remark}

\subsection{The Choquet integral extension}
\label{subsec:Choquet_extension}
If the given $\lpr$ is $2$-coherent on $\asetpa$,
it is possible to extend it onto $\lset$,
preserving $2$-coherence,
resorting to another extension,
generally dominating the $2$-coherent natural extension.
This is the \emph{Choquet integral extension} $\LC$,
discussed in this section.
The possibility of exploiting $\LC$ relies on the following:
\begin{proposition}
\label{pro:Choquet_2c}
Given $\lpr$ $2$-coherent on $\asetpa$,
the lower prevision $\LC:\lset\rightarrow\rset$ defined as
\begin{eqnarray}
\label{eq:Choquet_intext}
\LC (X)=\cint X\dlp,\ \forall X\in\lset
\end{eqnarray}
is an extension of $\lpr$ that is $2$-coherent on $\lset$.
\end{proposition}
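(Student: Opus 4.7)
The plan is to prove the two claims in the statement separately: first that $\LC$ extends $\lpr$, then that $\LC$ is $2$-coherent on $\lset$. The extension property is immediate from Proposition \ref{pro:Choquet_int_prop}(a): identifying an event with its indicator, $\LC(I_A)=\cint I_A\,d\lpr=\lpr(A)$ for every $A\in\asetpa$. For the second claim, note that $\lset$ is a linear space of gambles containing the real constants, matching Definition \ref{def:def_dlin} with $\bset=\{\varnothing,\Omega\}$, $\xset=\lset$, so $\lcond=\lset$. Hence Theorem \ref{thm:characterisation}(b) applies: $\LC$ is $2$-coherent on $\lset$ if and only if it satisfies (A1), (A2), (A4) and (A5).

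I would then check these four axioms in turn, each as a direct consequence of one or two items from Proposition \ref{pro:Choquet_int_prop}. For (A1): setting $c=\sup(X-Y)$ we have $X\leq Y+c$, so monotonicity (e) and constant additivity (d) give $\LC(X)\leq\LC(Y+c)=\LC(Y)+c$. Axiom (A2) is exactly the non-negative homogeneity of (b). Axiom (A4), with $A=B=\Omega$ (forced by the choice $\bset=\{\varnothing,\Omega\}$), reduces to $\LC(X-\LC(X))=0$, which is (d) applied with $k=-\LC(X)$.

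The main obstacle is (A5), the negative-homogeneity inequality $\LC(\lambda X)\leq\lambda\LC(X)$ for $\lambda<0$. My plan is to rewrite $\lambda X=-((-\lambda)X)$ with $-\lambda>0$, and combine Proposition \ref{pro:Choquet_int_prop}(c) and (b) to obtain
\[
\LC(\lambda X)=\cint \lambda X\,d\lpr=-\cint (-\lambda)X\,d\upr=\lambda\cint X\,d\upr,
\]
where $\upr$ is the conjugate of $\lpr$. Thus it suffices to show $\cint X\,d\lpr\leq\cint X\,d\upr$, for multiplying by $\lambda<0$ then reverses the inequality and yields $\LC(\lambda X)\leq\lambda\LC(X)$. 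By the representation in Proposition \ref{pro:Choquet_int_prop}(f), this Choquet monotonicity in the set function reduces to the pointwise dominance $\lpr(A)\leq\upr(A)$, that is, to $\lpr(A)+\lpr(\nega A)\leq 1$ for all $A\in\asetpa$.

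The really non-routine step — the one I would spend care on — is securing this dominance from the hypothesis that $\lpr$ is $2$-coherent on the pure event domain $\asetpa$. This inequality is a standard consequence of $2$-coherence for lower probabilities (it is, for instance, among the sufficient conditions for $2$-coherence on $\asetpa$ recalled in Example \ref{exa:comparison_2c_2c}), and I would invoke it as the input needed to close the argument. Once $\lpr\leq\upr$ is in hand, (A5) follows as described, and together with (A1), (A2), (A4) already verified, Theorem \ref{thm:characterisation}(b) delivers $2$-coherence of $\LC$ on $\lset$.
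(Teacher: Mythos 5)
Your proposal is correct and follows essentially the same route as the paper: verify the extension property via Proposition \ref{pro:Choquet_int_prop}(a), identify $\lset$ with $\lcond$ for $\bset=\{\varnothing,\Omega\}$, check (A1), (A2), (A4) from constant additivity, monotonicity and non-negative homogeneity of the Choquet integral, and obtain (A5) by passing to the conjugate $\upr$ via Proposition \ref{pro:Choquet_int_prop}(c) and reducing $\cint X\dlp\leq\cint X\dup$ to the dominance $\lpr\leq\upr$, which the paper likewise imports as a known necessary condition of $2$-coherence (citing Walley). The only cosmetic difference is that the paper routes (A1) through an explicit equivalent reformulation (A1''), whereas you verify it directly; the substance is identical.
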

\begin{proof}
$\LC$ extends $\lpr$:
in fact $\asetpa\subset\lset$ (identifying every event $A$ with its indicator $I_A$),
and $\LC(I_A)=\lpr(A)$ by Proposition \ref{pro:Choquet_int_prop} a).

To prove that $\LC$ is $2$-coherent on $\lset$,
note that $\lset$ is a special set $\lcond$
(Definition \ref{def:def_dlin}),
with $\xset=\lset$, $\bset=\{\Omega,\varnothing\}$.
Therefore,
we may apply Theorem \ref{thm:characterisation} b) and
check that $\LC$ verifies the following axioms:
\begin{itemize}
\item[(A1')]
$\LC(X)-\LC(Y)\leq\sup(X-Y)$, $\forall X,Y\in\lset$
\item[(A2')]
$\LC(\lambda X)=\lambda\LC(X)$, $\forall X\in\lset$, $\forall\lambda\geq 0$.
\item[(A4')]
$\LC(X-\LC(X))=0$, $\forall X\in\lset$.
\item[(A5')]
$\LC(\lambda X)\leq\lambda\LC(X)$, $\forall X\in\lset$, $\forall\lambda<0$.
\end{itemize}
As for condition (A1'), it is equivalent to
\begin{itemize}
\item[(A1'')]
If $X, Y\in\lset$, $k\in\rset$ are such that $Y\geq X+k$,
then $\LC(Y)\geq\LC(X)+k$,
\end{itemize}

In fact,
suppose (A1') holds and take $X,Y\in\lset$, $k\in\rset$ such that
$Y\geq X+k$.
Then, by (A1'), $\LC(X)-\LC(Y)\leq\sup(X-Y)\leq -k$,
hence (A1'').

Conversely, $\forall X,Y\in\lset$,
it holds that $Y\geq X+\inf(Y-X)$.
If (A1'') holds,
this implies $\LC(Y)\geq\LC(X)+\inf(Y-X)$,
equivalent to $\LC(X)-\LC(Y)\leq\sup(X-Y)$,
which is (A1').

Then (A1'') holds because of Proposition \ref{pro:Choquet_int_prop} d), e),
and \eqref{eq:Choquet_intext}.

For (A2'), apply Proposition \ref{pro:Choquet_int_prop} b), while
axiom (A4') follows from Proposition \ref{pro:Choquet_int_prop} d).

To prove (A5'),
recall that defining $\UP(A)=1-\LP(A)$,
$\forall A\in\asetpa$,
it is necessary for the $2$-coherence of $\lpr$ that $\UP(A)\geq\LP(A)$
\cite[Appendix B, Theorem B3 (b)]{wal91}.
Hence,
using this and the monotonicity of the Riemann integral at the inequality,
either \eqref{eq:Choquet_intext} or Proposition \ref{pro:Choquet_int_prop} f) at the equalities,
\begin{eqnarray*}
\begin{array}{ll}
\displaystyle\LC(X)&\displaystyle=\cint X\dlp=\inf X+\rint{\inf X}{\sup X}\lpr(X\geq x)dx\\
	&\displaystyle\leq\inf X+\rint{\inf X}{\sup X}\UP(X\geq x)dx=\cint X\dup.
\end{array}
\end{eqnarray*}
Finally, take $\lambda<0$.
Applying \eqref{eq:Choquet_intext} at the first equality below,
Proposition~\ref{pro:Choquet_int_prop} b) at the third,
Proposition~\ref{pro:Choquet_int_prop}~c) at the fourth and
the just established
$\cint X\dlp\leq\cint X\dup$ at the inequality,
we obtain
\begin{eqnarray*}	
\begin{array}{ll}
		 \displaystyle\LC(\lambda X)&\displaystyle=\cint \lambda X\dlp=\cint (-\lambda)(-X)\dlp=(-\lambda)\cdot\cint (-X)\dlp=\\
		&\displaystyle=\lambda\cdot\cint X\dup\leq\lambda\cdot\cint X\dlp=\lambda\cdot\LC(X),
	\end{array}
\end{eqnarray*}
which is (A5').
\end{proof}

\begin{remark}[$2$-convexity of the Choquet integral]
\label{rem:2_convex_CI}
As a byproduct of the equivalence between (A1'') and (A1') shown in the proof of Proposition \ref{pro:Choquet_2c}, we get that the Choquet integral $\LC$ in \eqref{eq:Choquet_intext} is $2$-convex on $\lset$,
as soon as $\lpr$ in \eqref{eq:Choquet_intext} is \emph{any} normalised capacity defined on $\asetpa$. To prove this relaxation of Proposition \ref{pro:Choquet_2c}, note that
(A1') and (A4'), which hold because of Proposition~\ref{pro:Choquet_int_prop} d), e) and the equivalence mentioned above, characterise $2$-convexity of $\LC$, by Theorem \ref{thm:characterisation} d).
This implies also, by Propositioin \ref{pro:Choquet_int_prop} a), that \emph{any} normalised capacity is always $2$-convex on $\asetpa$.
\end{remark}

The Choquet integral extension $\LC$ is therefore a $2$-coherent extension which
is relatively simple to compute,
by Proposition \ref{pro:Choquet_int_prop} f) or g).
In the case of a simple $X$, in particular, fewer computations are needed to determine $\LC$ rather than $\LEDC$.
The two extensions may not coincide, as shown next:
\begin{example}
	\label{exa:not_coincide}
	Let $\prt$, $\lpr$ and $Z$ be as in Example \ref{exa:comparison_2c_2c}.
	By Proposition \ref{pro:Choquet_int_prop} g),
	\begin{eqnarray*}
		\begin{array}{ll}
			\LC(X)&=-1+(0-(-1))\lpr(b\vee c\vee d)+1\lpr(c\vee d)+2\lpr(d)\\
			&=0.4>\LED(X)=0.
		\end{array}
	\end{eqnarray*}
\end{example}

\textbf{Discussion.}
Hence,
$\LC$ is intrinsically `more precise' than $\LED$.
More generally,
the following holds:
\begin{eqnarray}
	\label{eq:Choquet_ext_order}
	\LE\geq\LC\geq\LED\geq\LEDC.
\end{eqnarray} 
Note that each inequality may be strict:
$\LED>\LEDC$ in Example \ref{exa:different_ne},
$\LC>\LED$ in Example \ref{exa:not_coincide},
while it is known that $\LE=\LC$, $\forall X\in\lset$,
iff $\lpr$ is coherent and $2$-monotone
\cite[Theorem 6.1]{wal81} (otherwise $\LE>\LC$ for at least one $X\in\lset$).
Therefore, Sandwich Theorems analogous to those of Sections
\ref{sec:extensions_full}, \ref{sec:extension_cond_low_prev} 
seem not to be available in the extension problems studied in this section.

Thus, if $\lpr$ is coherent and $2$-monotone
\eqref{eq:Choquet_ext_order} simplifies a little, since then $\LE=\LC$.
Yet, even the rather strong assumption of $2$-monotonicity does not ensure that the Choquet integral extension may be equal to any of the remaining extensions we are investigating.
This is shown in the next example.
\begin{example}
\label{ex:2_monotone_Choquet}
Given the partition $\prt=\{\omega_1,\omega_2,\omega_3\}$,
define the lower probability $\lpr$ on $\asetpa$ as the lower envelope of the precise probabilities $P_1$, $P_2$:
$\lpr(A)=\min\{P_1(A),P_2(A)\}$, $\forall A \in\asetpa$,
see (the shaded area of) Table \ref{tab:2_monotone_Choquet}.
Let also $X$ be the gamble in $\lset$ defined by $X(\omega_1)=2$, $X(\omega_2)=3$, $X(\omega_3)=0$.

\begin{table}[ht]
\centering 
\begin{tabular}{c||c|c|c|c|c|c|c|c|c|c|}
	$\,$ &$\omega_1$  &$\omega_2$  &$\omega_3$  &$\omega_1\vee\omega_2$  &$\omega_1\vee\omega_3$  &$\omega_2\vee\omega_3$  &$X$  &$\alpha_{P_{i}}$  \\ 
	\hline 
	\cellcolor{Gray}$P_1$ &\cellcolor{Gray}$0.3$  &\cellcolor{Gray}$0.4$  &\cellcolor{Gray}$0.3$  &\cellcolor{Gray}$0.7$  &\cellcolor{Gray}$0.6$  &\cellcolor{Gray}$0.7$  &$1.8$  &$0$  \\ 
	\hline 
	\cellcolor{Gray}$P_2$ &\cellcolor{Gray}$0.1$  &\cellcolor{Gray}$0.7$  &\cellcolor{Gray}$0.2$  &\cellcolor{Gray}$0.8$  &\cellcolor{Gray}$0.3$  &\cellcolor{Gray}$0.9$ &$2.3$  &$0$  \\ 
	\hline 
	$P_3$&$0$  &$0$  &$1$  &$0$  &$1$  &$1$  &$0$  &$0.7$  \\ 
	\hline 
	\cellcolor{Gray}$\lpr$ &\cellcolor{Gray}$0.1$  &\cellcolor{Gray}$0.4$  &\cellcolor{Gray}$0.2$  &\cellcolor{Gray}$0.7$  &\cellcolor{Gray}$0.3$  &\cellcolor{Gray}$0.7$  &$\,$  &$\,$  \\ 
	\hline 
	$\lpr^{\prime}$ &$0.1$  &$0.4$  &$0.2$  &$0.7$  &$0.3$  &$0.7$  &$0.7$  &$\,$ 
\end{tabular} 
\caption{Data for Example \ref{ex:2_monotone_Choquet} }
\label{tab:2_monotone_Choquet}
\end{table}
\begin{itemize}
	\item[$\bullet$]
	$\lpr$ is coherent on $\asetpa$,
	as a lower envelope of (precise) probabilities \cite[Corollary 3.3.4]{wal91},
	and $2$-monotone,
	being coherent and defined on a three-element partition
	(see e.g. \cite[Proposition 6.9]{tro14}).
	\item[$\bullet$]
	Applying Proposition \ref{pro:Choquet_int_prop} g),
	$\LC(X)=0+(2-0)\lpr(X\geq 2)+(3-2)\lpr(X\geq 3)=1.8$.
	As for $\LED(X)$, we obtain $\LED(X)=1.4<\LC(X)$,
	using e.g. \eqref{eq:2cohneexpr1}
	(the value $1.4$ is achieved with $E=\omega_1\vee\omega_2$:
	$\inf(X|\omega_1\vee\omega_2)\lpr(\omega_1\vee\omega_2)+\inf(X|\omega_3)(1-\lpr(\omega_1\vee\omega_2))=2\cdot 0.7+0=1.4$).
	\item[$\bullet$]
	To compute $\LEC(X)$, consider the extension of $\lpr$ on $\eset=\asetpa\cup\{X\}$ given by $\lpr^{\prime}(\cdot)=\min\{P_1(\cdot),P_2(\cdot),P_3(\cdot)+0.7\}$ 
	(see Table \ref{tab:2_monotone_Choquet}).
	Note that when the argument of $P_1$, $P_2$, $P_3$ is an event in  $\asetpa$,
	$\lpr^\prime=\lpr$,
	while $\lpr^{\prime}(X)=0.7$.
	Thus, $\lpr^\prime$ is not coherent on $\eset$,
	since $\lpr^{\prime}(X)<\LE(X)=1.4$.
	By contrast, $\lpr^\prime$ is centered convex on $\eset$:
	this follows from the Envelope Theorem for convex previsions \cite[Theorem 3.3]{pel03}.
	This theorem states that $\lpr^\prime$ is a centered convex lower prevision on a set $\dset$ of gambles if and only if there exists a non-empty set of (precise) previsions
	$\mathcal{P}$ such that,
	$\forall X\in\dset$, $\lpr^{\prime}(X)=\min_{P\in\mathcal{P}}\{P(X)+\alpha_{P}\}$,
    with
	$\min_{P\in\mathcal{P}}\{\alpha_P\}=0$. 
	In this example $\mathcal{P}=\{P_1, P_2, P_3\}$, $\alpha_{P_{1}}=\alpha_{P_{2}}=0$, $\alpha_{P_{3}}=0.7$.
	Consequently,
	$\LEC(X)\leq\lpr^{\prime}(X)=0.7$
	(by the least-committal property of the convex natural extension,
	cf. the corresponding property stated for $2$-convexity in Proposition \ref{pro:properties_2_ne} e)).
	On the other hand, note that $X=2\omega_1+3\omega_2\geq\omega_1+\omega_2$.
	Monotonicity of convex previsions implies then $\LEC(2\omega_1+3\omega_2)\geq\LEC(\omega_1+\omega_2)=\lpr(\omega_1\vee\omega_2)=0.7$.
	Hence $\LEC(X)=0.7$.
	\item[$\bullet$]
	Since $\LEC(X)\geq\LEDC(X)$ by Lemma \ref{lem:order_ne},
	and $\LEDC(X)\geq 0.7$ (again by the monotonicity argument),
	also $\LEDC(X)=0.7$.
\end{itemize}
Summoning up, in this example $\lpr$ is $2$-monotone on $\asetpa$ but
$\LE(X)=\LC(X)>\LED(X)>\LEC(X)=\LEDC(X)$.
\end{example}
Interestingly,
\eqref{eq:Choquet_ext_order} and the preceding results are helpful in better delimiting the role of the Choquet integral extension $\LC$ in Imprecise Probability Theory.
In fact,
as well known $\LC$ is a lower bound to the natural extension $\LE$,
hence generally not a coherent extension.
With the $2$-coherent natural extension, the role of $\LC$ is reversed:
it is an upper bound to $\LEDC$,
and is anyway always $2$-coherent (if the starting $\lpr$ is).
We might prefer $\LC$ to $\LED$ for computational reasons,
in order to have a `more precise' extension,
or if we require comonotone additivity.
Concerning the last motivation,
recall that a measure $\mu$ is \emph{comonotone additive} if it is additive on comonotone gambles.
That is, $\mu(X+Y)=\mu(X)+\mu(Y)$ if $X$, $Y$ are such that there are no
$\omega_1$, $\omega_2$ with $X(\omega_1)<X(\omega_2)$ and $Y(\omega_1)>Y(\omega_2)$.
Then,
it is well known that the Choquet integral, hence $\LC$, is comonotone additive
\cite[Proposition C.5 (vii)]{tro14}.
Finally,
note that requiring comonotone additivity with $2$-coherence is much less demanding than with coherence.
In fact,
comonotone additivity is equivalent to $2$-monotonicity under coherence 
\cite[Theorem 12]{dec05},\cite[Theorem 6.22]{tro14}.

\begin{remark}
\label{rem:relaxing}
Suppose we relax the assumptions from the beginning of this section
by requiring only that $\lpr$ is centered $2$-convex on $\asetpa$.
As an interesting follow-up of Remark \ref{rem:2_convex_CI},
$\LC$ is then a convex extension of $\lpr$,
and $\LC\geq\LEDC$.
However,
when $\LED$ is finite,
cf. the later Proposition \ref{finiteness_AUL},
$\LC\geq\LED\geq\LEDC$
(both inequalities are strict in Example \ref{ex:2_monotone_Choquet}).
Then, $\LC$ approximates $\LEDC$ more loosely than $\LED$.
\end{remark}

\section{More on the ordering of natural extensions}
\label{sec:more_order_ne}
In this section we deepen some aspects,
which were mostly out of the scope of the previous three sections, of the various natural extensions
we are dealing with.
The setting is an unconditional environment for the sake of simplicity,
thus $\lpr(\cdot)$ is now a lower prevision on an arbitrary set $\dset$ of (unconditional) gambles.
We focus on
\begin{itemize}
	\item[a)]
	relationships with avoiding sure loss-type notions;
	\item[b)]
	conditions for the finiteness of the natural extensions.
\end{itemize}
Items a) and b) are strongly connected:
as we shall now see,
precisely conditions of avoiding sure loss (in some sense) are relevant for b).
\begin{definition}
	\label{def:ASL_AUSL}
	Given $\lpr:\dset\rightarrow\rset$,
	\begin{itemize}
		\item[a)] $\lpr$ \emph{avoids sure loss} (ASL) on $\dset$ \cite[Section 2.4.4 (a)]{wal91} iff $\forall n\in\nset^+$, $\forall X_1,\ldots,X_n\in\dset$, $\forall s_1,\ldots,s_n\geq 0$,
		defining $\LG=\sum_{i=1}^{n}s_i(X_i-\lpr(X_i))$,
		it holds that $\sup\LG\geq 0$.
		\item[b)]
		$\lpr$ \emph{$1$-avoids sure loss} (1-ASL) on $\dset$ \cite[Remark 2]{pel16} iff a) applies, with $n=1$.
		\item[c)]
		$\lpr$ \emph{avoids unbounded sure loss} (AUSL) on $\dset$ \cite[Definition 5]{pel03bis} iff there exists $k\in\rset$ such that,
		$\forall n\in\nset^+$, $\forall X_1,\ldots,X_n\in\dset$, $\forall s_1,\ldots,s_n\geq 0$ with $\sum_{i=1}^{n}s_i=1$,
		defining $\LG=\sum_{i=1}^{n}s_i(X_i-\lpr(X_i))$, it holds that $\sup\LG\geq k$.
		\item[d)]
		$\lpr$ \emph{$1$-avoids unbounded sure loss} ($1$-AUSL) on $\dset$ iff
		c) applies with $n=1$ (and, consequently, $s_1=1$).
	\end{itemize}
\end{definition}
The condition of avoiding sure loss is the best known and strongest of the four.
$1$-ASL is in fact a rather weak consistency requirement. It is equivalent to $\lpr(X)\leq\sup X$, $\forall X\in\dset$,
as can be easily checked.
It is also easy to see that $\lpr$ is $1$-AUSL iff there exists $k\in\rset$ such that $\underline{Q}=\lpr+k$ is $1$-ASL.
As for AUSL, it holds that
\begin{proposition}
	\label{pro:suff_AUSL}
	Each of the following conditions is sufficient for $\lpr:\dset\rightarrow\rset$ to avoid unbounded sure loss:
	\begin{itemize}
		\item[a)]
		$\dset$ is finite
		\item[b)]
		$\lpr$ is convex on $\dset$
	\end{itemize}
\end{proposition}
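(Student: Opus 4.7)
The proof naturally splits into two independent arguments, one for each sufficient condition, so I would treat them separately and produce an explicit candidate constant $k$ in each case.

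For part (a), assume $\dset = \{Y_1,\ldots,Y_N\}$ is finite. Since each $Y_j$ is a bounded gamble and $\lpr(Y_j)\in\rset$, the quantity
\[
k := \min_{j=1,\ldots,N}\bigl(\inf Y_j - \lpr(Y_j)\bigr)
\]
is a finite real number. Given any $n$, any $X_1,\ldots,X_n\in\dset$ and any $s_1,\ldots,s_n\geq 0$ with $\sum_{i=1}^{n}s_i=1$, the elementary bound $\sup\sum s_i f_i \geq \sum s_i \inf f_i$ (valid for $s_i\geq 0$) applied to $f_i = X_i - \lpr(X_i)$ yields
\[
\sup\LG \geq \sum_{i=1}^{n} s_i\bigl(\inf X_i-\lpr(X_i)\bigr) \geq \sum_{i=1}^{n} s_i\, k = k.
\]
Finiteness of $\dset$ is used only to ensure that the infimum over indices exists and is a real number; without it, $k$ could be $-\infty$.

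For part (b), fix any $X_0\in\dset$ (the statement is vacuous if $\dset=\varnothing$) and set $k := \inf X_0 - \lpr(X_0)\in\rset$. Given $n$, $X_1,\ldots,X_n\in\dset$ and $s_1,\ldots,s_n\geq 0$ with $\sum_{i=1}^{n}s_i=1$, I apply Definition \ref{def:consistency} c) in the unconditional setting, with $m=n$, $s_0=1$, and $X_0$ as the distinguished gamble; since $\Ss=\Omega\neq\varnothing$, convexity of $\lpr$ gives
\[
\sup\Bigl(\sum_{i=1}^{n} s_i\bigl(X_i-\lpr(X_i)\bigr) - \bigl(X_0-\lpr(X_0)\bigr)\Bigr)\geq 0.
\]
From $\sup(A-B)\geq 0$ it follows that $\sup A \geq \inf B$: indeed, for every $\varepsilon>0$ there exists $\omega$ with $A(\omega)-B(\omega)>-\varepsilon$, so $\sup A \geq A(\omega) \geq B(\omega)-\varepsilon \geq \inf B -\varepsilon$, and letting $\varepsilon\downarrow 0$ yields the claim. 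Applied to $A=\LG$ and $B=X_0-\lpr(X_0)$, this delivers $\sup\LG \geq \inf X_0 - \lpr(X_0) = k$, as required.

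The only mildly delicate point, and the one I would flag as the main thing to get right, is the passage from $\sup(A-B)\geq 0$ to $\sup A \geq \inf B$ in part (b): one must avoid the temptation to write $\sup A \geq \sup B$ (which is false in general) and instead exploit that the supremum of a gamble is not necessarily attained, whence the little $\varepsilon$-argument. Everything else is a direct application of the definitions of convexity and of AUSL, with the candidate constants $k$ chosen as above.
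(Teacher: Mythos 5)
Your proof is correct. Part (a) is essentially identical to the paper's argument: the paper also observes that any admissible gain is a convex combination of the finitely many gambles $X_i-\lpr(X_i)$ and hence bounded below by $k=\min_i\{\inf X_i-\lpr(X_i)\}$. Part (b) is where you genuinely diverge: the paper does not prove it at all, but simply cites Proposition 4 of the earlier conference paper on convex imprecise previsions, whereas you give a short self-contained derivation. Your argument — fix any $X_0\in\dset$, invoke the convexity gain condition with $s_0=1$ against the given convex combination, and pass from $\sup(\LG-(X_0-\lpr(X_0)))\geq 0$ to $\sup\LG\geq\inf X_0-\lpr(X_0)=k$ via the $\varepsilon$-argument — is valid, and the point you flag (that $\sup(A-B)\geq 0$ yields $\sup A\geq\inf B$, not $\sup A\geq\sup B$) is exactly the right thing to be careful about. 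What your route buys is that the reader need not chase the external reference; what it costs is nothing, since the constant $k$ you produce depends only on the single fixed gamble $X_0$ and so is uniform over all admissible gains, exactly as the AUSL definition requires.
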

\begin{proof}
	\begin{itemize}
		\item[a)]
		If $\dset=\{X_1,\ldots,X_n\}$,
		any $\LG$ in Definition \ref{def:ASL_AUSL} c) may be written in the form
		$\LG=\sum_{i=1}^{n}s_i(X_i-\lpr(X_i))$,
		with some $s_i$ possibly equal to zero.
		Since anyway $\sum_{i=1}^{n}s_i=1$,
		$\LG$ is a convex linear combination of $X_i-\lpr(X_i)$, $i=1,\ldots,n$.
		As such, $\LG\geq\min_{i=1,\ldots,n}\{\inf X_i-\lpr(X_i)\} =k$.
		Thus, $\sup\LG\geq k$.
		\item[b)]
		Proven in \cite[Proposition 4]{pel03bis}.
	\end{itemize}
\end{proof}
Indeed, AUSL may be a mild consistency requirement.
As appears from Proposition \ref{pro:suff_AUSL} a), it imposes no constraint whatever on $\lpr$ when its domain is finite.
Even with infinite domains, it may be expected to exhibit some unsatisfactory features.
Consider for this that also the stronger ASL condition is compatible with lack of monotonicity,
unlike $2$-coherence and even $2$-convexity.

In our view,
the conditions of ASL, $1$-ASL, AUSL and $1$-AUSL have an ancillary role with respect to coherence, $2$-coherence, convexity and $2$-convexity, respectively:
they ensure finiteness of the corresponding natural extensions.
In fact,
\begin{proposition}
	\label{finiteness_AUL}
	Given $\lpr:\dset\rightarrow\rset$,
	\begin{itemize}
		\item[a)]
		$\LE(Z)$ is finite $\forall Z$ iff $\lpr$ avoids sure loss.
		\item[b)]
		$\LED(Z)$ is finite $\forall Z$ iff $\lpr$ $1$-avoids sure loss.
		\item[c)]
		$\LEC(Z)$ is finite $\forall Z$ iff $\lpr$ avoids unbounded sure loss.
		\item[d)]
		$\LEDC(Z)$ is finite $\forall Z$ iff $\lpr$ $1$-avoids unbounded sure loss.
	\end{itemize}
\end{proposition}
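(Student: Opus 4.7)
The four parts admit a common template, and I would handle them in parallel. Specialising Definition~\ref{def_nat_ext} to the unconditional case ($A = \Omega$, no conditioning events $B_i$), $\alpha$ lies in $L(Z)$ (respectively in $\LD(Z)$, $L_c(Z)$, $\LDC(Z)$) exactly when there is an admissible gain $\LG$ of the appropriate form with $\alpha < \inf(Z - \LG)$. Each extension can therefore be written as
\[
\sup_{\LG}\, \inf(Z - \LG),
\]
the supremum ranging over the corresponding class of $\LG$s: arbitrary non-negative combinations $\sum s_i(X_i - \lpr(X_i))$ for $\LE$; a single $s_1(X_1 - \lpr(X_1))$ for $\LED$; convex combinations $\sum s_i(X_i - \lpr(X_i))$ with $\sum s_i = 1$ for $\LEC$; and a single $X - \lpr(X)$ (so $s_1 = 1$) for $\LEDC$.

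Two elementary bounds drive everything. For any bounded gain $\LG$,
\[
\inf Z - \sup \LG \;\leq\; \inf(Z - \LG) \;\leq\; \sup Z - \sup \LG,
\]
the right inequality obtained by evaluating $Z - \LG$ at points where $\LG$ nearly attains its supremum. The right bound controls the "if" direction and the left bound the "only if" direction, uniformly for all four parts.

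Finiteness from below is automatic in every case. For (a) and (b) the empty sum ($m=0$) shows $L(Z)$, $\LD(Z) \supseteq (-\infty, \inf Z)$, so $\LE(Z), \LED(Z) \geq \inf Z$; for (c) and (d), picking $m = 1$, $s_1 = 1$ and any fixed $X \in \dset$ puts the open interval $(-\infty,\, \lpr(X) + \inf(Z - X))$ inside $L_c(Z)$, $\LDC(Z)$, so $\LEC(Z), \LEDC(Z) > -\infty$. For finiteness from above, assume the relevant ASL-type condition. Under ASL (respectively 1-ASL) every admissible $\LG$ has $\sup \LG \geq 0$, and the right bound immediately gives the extension $\leq \sup Z < +\infty$. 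Under AUSL (respectively 1-AUSL) with witness constant $k$, every admissible $\LG$ has $\sup \LG \geq k$, yielding the extension $\leq \sup Z - k < +\infty$. Conversely, if the relevant condition fails I must produce admissible $\LG_n$ with $\sup \LG_n \to -\infty$; the left bound then forces $\inf(Z - \LG_n) \to +\infty$, so the extension is $+\infty$.

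Producing such $\LG_n$ is where the four parts differ and is the one point that deserves care. For (a) and (b) the feasible set of gains is a cone ($s_i \geq 0$ with no normalisation), so a single witness $\LG$ with $\sup \LG < 0$ can simply be rescaled: $\LG_n := n\LG$ is still admissible and $\sup \LG_n = n \sup \LG \to -\infty$. For (c) and (d) the constraint $\sum s_i = 1$ blocks any such scaling; instead I would read off $\LG_n$ directly from the quantifier negation of AUSL (respectively 1-AUSL), which is precisely the statement that for every $n$ there is an admissible $\LG_n$ with $\sup \LG_n < -n$. This cone-versus-simplex distinction between the ASL-type and AUSL-type conditions is the only real subtlety; once it is noted, the same estimate closes all four parts.
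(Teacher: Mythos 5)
Your proof is correct, and for the one part the paper actually proves in full --- item d) --- your argument is essentially the paper's: establish $\LDC(Z)\neq\varnothing$ by exhibiting $\alpha<\lpr(X)+\inf(Z-X)$, bound $\alpha\leq\sup Z-k$ under $1$-AUSL, and derive $\LEDC(Z)=+\infty$ from the quantifier negation of $1$-AUSL. The difference is one of scope: the paper disposes of a), b), c) by citation (to Walley for $\LE$, to \cite{pel16} for $\LED$, to \cite{pel03bis} for $\LEC$) and writes out only d), whereas you give a single self-contained template covering all four. What your version buys is the explicit identification of the two bounds $\inf Z-\sup\LG\leq\inf(Z-\LG)\leq\sup Z-\sup\LG$ as the common engine, together with the cone-versus-simplex observation explaining why the ASL-type conditions need only a single negative-supremum witness (rescalable by positive homogeneity of the feasible set) while the AUSL-type conditions must be negated quantifier by quantifier; this makes the parallelism between the four statements, which the paper leaves implicit, completely transparent. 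The only nitpick is that for $\LD(Z)$ the definition fixes $m=1$, so the ``empty sum'' is really the choice $s_1=0$ rather than $m=0$, and the lower-bound step for $\LEC$ and $\LEDC$ tacitly uses $\dset\neq\varnothing$ --- both harmless, and the paper's own proof of d) makes the same tacit assumption.
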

\begin{proof}
Item a) was proven in \cite[Section 3.1.2 (a)]{wal91},
b) in \cite[Proposition 12 and Remark~3]{pel16},
c) in \cite[Proposition 3]{pel03bis}.

We prove here d).
Recall for this (Definition \ref{def_nat_ext} d)) that
\begin{eqnarray*}
		\begin{array}{ll}
			\LEDC(Z)&=\sup\LDC(Z)\\
			&=\sup\{\alpha\in\rset:\sup\{(X-\lpr(X))-(Z-\alpha)\}<0, \mbox{for some } X\in\dset\}.
		\end{array}
\end{eqnarray*}
Preliminarily,
we prove that $\LDC(Z)\neq\emptyset$, $\forall Z$.
In fact, take $X\in\dset$.
For $\alpha<\inf Z-\sup X+\lpr(X)$ it holds that
$\sup\{X-\lpr(X)-(Z-\alpha)\}\leq\sup X-\lpr(X)-\inf Z+\alpha<-\alpha+\alpha=0$,
hence $\alpha\in\LDC(Z)$.

We prove now the direct implication:
assuming that $\LEDC$ is finite,
suppose by contradiction that $\lpr$ is not $1$-AUSL.
Then,
$\forall k\in\rset$, $\exists X\in\dset:\sup(X-\lpr(X))<k$.
It follows that,
for any $\varepsilon>0$,
$(X-\lpr(X))-(Z-(\inf Z-k-\varepsilon))\leq\sup\{X-\lpr(X)\}-(Z-\inf Z)-k-\varepsilon<-\varepsilon<0$.

Therefore,
$\sup\{(X-\lpr(X))-(Z-(\inf Z-k-\varepsilon))\}\leq-\varepsilon<0$,
meaning that $\inf Z-k-\varepsilon\in\LDC(Z)$, $\forall k\in\rset$, $\forall\varepsilon>0$.
Hence $\LEDC(Z)=+\infty$.

Conversely,
let $\lpr$ be $1$-AUSL and consider any $\alpha\in\LDC(Z)$.
Then,
by definition of $\LEDC(Z)$,
\begin{eqnarray*}
\sup\{(X-\lpr(X))-(Z-\alpha)\}<0.
\end{eqnarray*}
Hence $X-\lpr(X)\leq\sup Z-\alpha$.
Taking the supremum and since $\lpr$ is $1$-AUSL,
there exists $k$ such that
\begin{eqnarray*}
k\leq\sup\{X-\lpr(X)\}\leq\sup Z-\alpha.
\end{eqnarray*}
Since then $\alpha\leq\sup Z-k$,
$\forall\alpha\in\LDC(Z)$,
we have that $\LEDC(Z)\in\rset$.
\end{proof}
Note that each of $\LE$, $\LED$, $\LEC$, $\LEDC$ is finite (or alternatively infinite) for all gambles if it is so for one gamble $Z$.
In other words, these extensions cannot be simultaneously finite on some gambles and infinite on other ones.

Proposition \ref{finiteness_AUL} clarifies what conditions characterise the finiteness of $\LE$, $\LED$, $\LEC$, $\LEDC$.
From Lemma \ref{lem:order_ne}, we deduce that avoiding sure loss is sufficient for the finiteness of them all.

Further,
Proposition \ref{finiteness_AUL} is helpful in establishing that some natural extensions may be infinite while other ones are not for the same gamble,
and that there is no order relation, in general, between $\LED$ and $\LEC$
(thus the ordering among $\LE$, $\LE_2$, $\LEC$, $\LEDC$ is partial,
following equations \eqref{eq:order_ne}).
We include some examples proving these facts:
\begin{itemize}
	\item[$-$]
	$\LED=+\infty>\LEC$ in Example \ref{ex:ne_order_1} a);
	\item[$-$]
	$\LE=+\infty>\LEC>\LED$ in Example \ref{ex:ne_order_1} b)
	(while $+\infty>\LED>\LEC$ in Example \ref{ex:2_monotone_Choquet});
	\item[$-$]
	$\LEC=+\infty>\LED$ in Example \ref{ex:ne_order_2} a);
	\item[$-$]
	$\LEC=\LED=+\infty>\LEDC$ in Example \ref{ex:ne_order_2} b).
\end{itemize}
\begin{example}
	\label{ex:ne_order_1}
	Let $\prt=\{\omega_1,\ldots,\omega_n\}$ be a finite partition,
	and $\lpr:\prt\rightarrow\rset$ be a lower probability,
	with $\lpr(\omega_i)=\frac{1}{n}+\delta$, $\delta>0$, $i=1,\ldots,n$.
	\begin{itemize}
		\item[a)]
		Suppose additionally that $\delta>1-\frac{1}{n}$. Then,
		\begin{itemize}
			\item[$\bullet$]
			$\lpr$ is not $1$-ASL:
			$\lpr(\omega_i)>\frac{1}{n}+1-\frac{1}{n}=1=\sup\omega_i$,
			$i=1,\dots,n$.
			\item[$\bullet$]
			$\lpr$ is AUSL, by Proposition \ref{pro:suff_AUSL} a).
		\end{itemize}
		Therefore, by Proposition \ref{finiteness_AUL},
		$\LED=+\infty>\LEC$.
		\item[b)]
		Now suppose instead $0<\delta<\frac{1}{n(n-1)}$, $n\geq 3$.
		Then
		\begin{itemize}
			\item[$\bullet$]
			$\lpr$ is not ASL:
			$\sum_{i=1}^{n}\lpr(\omega_i)>1$,
			contradicting a necessary condition for ASL
			\cite[Section 4.6.1]{wal91}.
			\item[$\bullet$]
			$\lpr$ is AUSL, by Proposition \ref{pro:suff_AUSL} a).
			Hence, $\LEC(Z)<+\infty$, $\forall Z$.
			In particular,
			$\LEC(\varnothing)\geq\delta>0$.
			To see this, use Definition \ref{def_nat_ext} c) with
			$s_i=\frac{1}{n}$, $i=1,\ldots,n$:
			$\sup\{\sum_{i=1}^{n}\frac{1}{n}(\omega_i-\lpr(\omega_i))-(0-\alpha)\}<0$
			iff $\alpha<\inf\{\frac{1}{n}\sum_{i=1}^{n}\lpr(\omega_i)-\frac{1}{n}\sum_{i=1}^{n}\omega_i\}=
			\delta$.
			Hence any $\alpha<\delta$ belongs to $L_c(\varnothing)$ and $\LEC(\varnothing)\geq\delta>0$.
			\item[$\bullet$]
			$\lpr$ is $2$-coherent on $\prt$:
			by Definition \ref{def:consistency} b),
			any admissible $\LG$ involves two atoms of $\prt$,
			let them be $\omega_i$, $\omega_j$
			(not necessarily distinct: we neglect the trivial case $\omega_i=\omega_j$).
			Since $n\geq 3$ and $\lpr(\omega_i)+\lpr(\omega_j)<1$
			(because $\delta<\frac{1}{n(n-1)})$,
			for any choice of $\omega_i, \omega_j\in\prt$ ($i\neq j)$ $\lpr$ is the restriction on
			$\{\omega_i,\omega_j\}$ of a dF-coherent (precise) probability
			$P$ concentrated on $\omega_i$, $\omega_j$, $\omega_k\ (k\neq i, k\neq j$):
			$P(\omega_i)=\lpr(\omega_i)$, $P(\omega_j)=\lpr(\omega_j)$, $P(\omega_k)=1-(P(\omega_i)+P(\omega_j))$.
			Since dF-coherence implies coherence and $2$-coherence,
			any $\LG$ admissible for $2$-coherence satisfies the condition $\sup\LG\geq 0$, $\forall\omega_i,\omega_j$.
			Therefore $\lpr$ is $2$-coherent and
			$\LED$ is finite and $2$-coherent too,
			which implies $\LED(\varnothing)=0$.	
		\end{itemize}
	From what derived above and Proposition \ref{finiteness_AUL},
	we conclude that $\LE(\varnothing)=+\infty>\LEC(\varnothing)>\LED(\varnothing)$.
	\end{itemize}
	
\end{example}
\begin{example}
	\label{ex:ne_order_2}
	Given $\prt=\{\omega_1,\omega_2\}$,
	define $\dset=\{X_0,X_1,\ldots,X_{2n},X_{2n+1},\ldots,\}$ as follows,
	for any $n\in\nset$:
	\begin{itemize}
		\item[$-$] $X_{2n}(\omega_1)=-n$, $X_{2n}(\omega_2)=-2n$;
		\item[$-$] $X_{2n+1}(\omega_1)=-2n$, $X_{2n+1}(\omega_2)=-n$. 
	\end{itemize}
	Then assign a lower prevision $\lpr:\dset\rightarrow\rset:
	\lpr(X_{2n})=\lpr(X_{2n+1})=-n+\delta$, $\delta\geq 0$, for any $n\in\nset$.
	We shall consider the gains
	\begin{eqnarray*}
		\LG_n=\frac{1}{2}(X_{2n}-\lpr(X_{2n}))+\frac{1}{2}(X_{2n+1}-\lpr(X_{2n+1})).
	\end{eqnarray*}
	Note that, for any $n\in\nset$,
	\begin{eqnarray}
	\label{eq:gain_example}
	\LG_n(\omega_1)=\LG_n(\omega_2)=-\frac{n}{2}-\delta,
	\end{eqnarray}
	as is easily verified.
	
	Then,	
	\emph{$\lpr$ is not AUSL}:
	consider the sequence $(\LG_n)_{n\in\nset}$ of gains admissible by Definition \ref{def:ASL_AUSL} c).
	Since $\lim_{n\rightarrow +\infty}\sup G_n=\lim_{n\rightarrow +\infty}(-\frac{n}{2}-\delta)=-\infty$,
	there is no $k\in\rset$ such that $\sup\LG_n\geq k,\ \forall n\in\nset$.
	
	Let us now investigate two distinct subcases.
	\begin{itemize}
		\item[a)]
		$\delta=0$.
		
		In this case $\lpr$ is $1$-ASL:
		$\lpr(X_{2n})=\lpr(X_{2n+1})=-n=\sup(X_{2n})=\sup(X_{2n+1})$.
		
		From Proposition \ref{finiteness_AUL},
		we obtain $\LEC=+\infty>\LED$.
		\item[b)]
		$\delta>0$
		
		Then $\lpr$ is not $1$-ASL:
		$\lpr(X_{2n})=\lpr(X_{2n+1})=-n+\delta>\sup(X_{2n})=\sup(X_{2n+1})=-n$.
		
		From Proposition \ref{finiteness_AUL},
		$\LEC=\LED=+\infty$.
		However,
		note that $\LEDC(\varnothing)<+\infty$.
		In fact,
		from Definition \ref{def_nat_ext} d),
		\begin{eqnarray*}
			\LDC(\varnothing)=\{\alpha\in\rset:\sup\{X_n-\lpr(X_n)+\alpha<0\},\mbox{ for some } X_n\in\dset\}.
		\end{eqnarray*}
		Since $\sup\{X_n-\lpr(X_n)+\alpha\}=\alpha+\sup\{X_n-\lpr(X_n)\}=\alpha-\delta<0$
		iff $\alpha<\delta$,
		we obtain $\LEDC(\varnothing)=\delta\in\rset^+$.
		
		Summarising,
		we have that
		$\LE(\varnothing)=\LEC(\varnothing)=\LED(\varnothing)=+\infty>\LEDC(\varnothing)$.
	\end{itemize}
	
	
\end{example}

\section{Conclusions}
\label{sec:conclusions}
In this paper we have deepened the study of the $2$-coherent and $2$-convex natural extensions $\LED$ and $\LEDC$,
initiated in \cite{pel16, pel17},
and have established the role of the Choquet integral extension within $2$-coherence.
We have seen in Sections \ref{sec:extensions_full} and \ref{sec:extension_cond_low_prev}
that there are important situations where $\LEDC$ or $\LED$ coincide with the natural extension $\LE$ or the convex natural extension $\LEC$.
This does not happen, generally, in the framework of Section \ref{sec:extension_lprob_to_lprev},
i.e. if $\lpr$ is given on $\asetpa$
and the extension is on $\lset$.
Intuitively,
this might be justified thinking that while the environment where $\lpr$ is initially assessed is `full' in some sense in all cases we discuss,
in the hypotheses of Section \ref{sec:extension_lprob_to_lprev}
it is relatively `poor' with respect to the extended set.
In fact there is a jump there from events to gambles,
and from probabilities to previsions.
We supplied in this case simplifying formulae for $\LED$ and $\LEDC$,
which make the practical computation of these extensions rather simple
when the partition $\prt$ is finite,
and more generally show that the extensions depend on the evaluation of decumulative distribution functions.
As for future work,
the most immediate challenge is to investigate whether the properties detected so far extend to further situations.
More work is needed also to practically exploit the cases of coincidence among different natural extensions.
Lastly,
other unexplored areas concern the `transitivity property' (cf. \cite[Section 4.5.4 ]{tro14}) for the different natural extensions and the potential role (for instance in computing extensions directly) of significant envelope theorems for $2$-coherent or $2$-convex previsions.
These theorems should generalise the existing envelope theorems for, respectively, coherent lower previsions (cf. \cite[Section 3.3.3]{wal91}) and convex lower previsions (cf. the statement of the envelope theorem recalled in Example~\ref{ex:2_monotone_Choquet}).

\section*{Acknowledgements}
We wish to thank the anonymous referees for their helpful suggestions,
which let us improve the paper.

We acknowledge partial support by the FRA2015 grant `Mathematical Models for Handling Risk and Uncertainty'.

\bibliographystyle{plain}

\begin{thebibliography}{10}


\bibitem
{dec05}
De Cooman G., Troffaes M. C. M. and Miranda E. (2005),
\textit{$n$-Monotone lower previsions},
Journal of Intelligent \& Fuzzy Systems,
16(4):253--263.


\bibitem
{def37}
de Finetti B. (1937),
\textit{La pr\'evision: ses lois logiques, ses sources subjectives},
Annales de l'Institut Henri Poincar\'e,
7(1):1--68,
http://eudml.org/doc/79004,
English version: \textit{Foresight: Its Logical Laws, Its Subjective Sources},
in Breakthroughs in Statistics, volume I, pp. 134-174, Springer, 1992.

\bibitem
{def74}
de Finetti B. (1974),
Theory of Probability,
Wiley.

\bibitem
{den94}
Denneberg D. (1994),
Non-Additive Measure and Integral,
Springer Netherlands.


\bibitem
{goo88}
Goodman I. R. and Nguyen H. T. (1988),
\textit{Conditional objects and the modeling of uncertainties},
in Gupta M. and Yamakawa T. (eds.),
Fuzzy Computing,
119--138,
Elsevier (North-Holland),
Amsterdam.

\bibitem
{gra16}
Grabisch M. (2016),
Set Functions, Games and Capacities in Decision Making,
Springer.


\bibitem
{pel03}
Pelessoni R., Vicig P. (2003)
\textit{Convex Imprecise Previsions},
Reliable Computing,
9(6):465--485.

\bibitem
{pel03bis}
Pelessoni R., Vicig P. (2003)
\textit{Convex Imprecise Previsions : Basic Issues and Applications},
In: Bernard J.-M., Seidenfeld T., Zaffalon M. (eds),
Proceedings of the 3rd International Symposium on Imprecise Probabilities and Their Applications,
423--436,
Carleton Scientific.



\bibitem
{pel05}
Pelessoni R., Vicig P. (2005),
\textit{Uncertainty modelling and conditioning with convex imprecise previsions},
International Journal of Approximate Reasoning,
39(2--3):297--319.


\bibitem
{pel09}
Pelessoni R., Vicig P. (2009),
\textit{Williams coherence and beyond},
International Journal of Approximate Reasoning,
50(4):612--626.


\bibitem
{pel14}
Pelessoni R., Vicig P. (2014),
\textit{The Goodman-–Nguyen relation within imprecise probability theory},
International Journal of Approximate Reasoning,
55(8):1694-1707.

\bibitem
{pel15}
Pelessoni R., Vicig P. (2015),
\textit{Weak consistency for imprecise conditional previsions},
In: Augustin T., Doria S., Miranda E., Quaeghebeur E. (eds)
Proceedings of the 9th International Symposium
on Imprecise Probability: Theories and Applications,
237--246,
Aracne Editrice.

\bibitem
{pel16}
Pelessoni R., Vicig P. (2016).
\textit{$2$-Coherent and $2$-convex conditional lower previsions},
International Journal of Approximate Reasoning,
77:66--86.


\bibitem
{pel16bis}
Pelessoni R., Vicig P., Montes I., Miranda E. (2016)
\textit{Bivariate $p$-boxes},
International Journal of Uncertainty, Fuzziness and Knowledge-Based Systems,
24(2):229--263.

\bibitem
{pel17}
Pelessoni R., Vicig P. (2017),
\textit{A sandwich theorem for natural extensions},
In: Ferraro M B et al. (eds.),
Soft Methods for Data Science,
391--398,
Springer International Publishing.


\bibitem
{tro14}
Troffaes M. C. M., de Cooman G. (2014),
Lower Previsions,
Wiley.

\bibitem
{wei01}
Weichselberger K. (2001),
Elementare Grundbegriffe einer allgemeineren Wahrscheinlichkeitsrechnung I: Intervallwahrscheinlichkeit als umfassendes Konzept,
volume 1,
Physica Verlag,
Heidelberg.

\bibitem
{wal81}
Walley P. (1981)
Coherent  lower  (and  upper)  probabilities,
Technical  Report  22,
Department  of Statistics,
University of Warwick, Coventry, UK.

\bibitem
{wal91}
Walley P. (1991)
Statistical reasoning with imprecise probabilities,
Chapman and Hall.

\bibitem
{wil07}
Williams P. M. (2007),
Notes on conditional previsions,
International Journal of Approximate Reasoning,
44(3):366--383.





\end{thebibliography}

\end{document}